\documentclass{amsart}

\usepackage{amsmath,amssymb,amsthm,amscd}

\theoremstyle{plain}

\newtheorem{thm}{Theorem}[section]
\newtheorem{thmintro}{Theorem}
\newtheorem{prop}[thm]{Proposition}
\newtheorem{lemma}[thm]{Lemma}
\newtheorem{cor}[thm]{Corollary}

\theoremstyle{definition}

\newtheorem{defn}[thm]{Definition}
\newtheorem{ex}[thm]{Example}
\newtheorem{remark}[thm]{Remark}

\renewcommand{\o}[1]{\overline{#1}}

\newcommand{\C}{\mathbb C}

\newcommand{\F}{\mathbb F}
\newcommand{\G}{\mathcal G}

\renewcommand{\L}{\mathcal L}
\renewcommand{\O}{\mathcal O}
\renewcommand{\P}{\mathbb P}

\newcommand{\Q}{\mathbb Q}

\newcommand{\Z}{\mathbb Z}

\newcommand{\lra}{\longrightarrow}

\newcommand{\Fp}{\F_{p}}

\newcommand{\Zp}{\Z_{p}}
\newcommand{\Zpbar}{\o{\Z}_p}
\newcommand{\Fpbar}{\o{\F}_p}
\newcommand{\Qp}{\Q_{p}}
\newcommand{\Qpbar}{\o{\Q}_p}
\newcommand{\Qvbar}[1]{\o{\Q}_{#1}}
\newcommand{\Qbar}{\o{\Q}}
\newcommand{\Qn}{\Q_{n}}

\newcommand{\psmallmat}[4]{\left( \begin{smallmatrix} #1 & #2 \\ #3 & #4 \end{smallmatrix} \right)}
\newcommand{\pmat}[4]{\left( \begin{matrix} #1 & #2 \\ #3 & #4 \end{matrix} \right)}

\newcommand{\ve}{\varepsilon}
\newcommand{\rhobar}{\o{\rho}}
\newcommand{\chibar}{\o{\chi}}
\newcommand{\rhobarloc}[1]{\rhobar_{#1}\big|_{G_{\Qp}}}
\newcommand{\rhobarinert}[1]{\rhobar_{#1}\big|_{I_p}}
\newcommand{\mum}{\mu_{\min}}

\newcommand{\mump}{\mu_{\min}^+}

\newcommand{\mm}[2]{\mu_{\min}^{#2}(#1)}
\renewcommand{\t}[2]{\theta_{#1}(#2)}
\newcommand{\ti}[2]{\theta_{#1,i}(#2)}
\renewcommand{\tt}[2]{\vartheta_{#1}(#2)}
\newcommand{\ttn}[1]{\vartheta_n(#1)}
\newcommand{\tn}[1]{\theta_n(#1)}
\newcommand{\ttnf}{\vartheta_n(f)}
\newcommand{\ttv}[3]{\vartheta_{#1,#3}(#2)}
\newcommand{\tnf}{\theta_n(f)}
\newcommand{\tnfi}{\theta_{n,i}(f)}
\newcommand{\tni}[1]{\theta_{n,i}(#1)}

\newcommand{\lam}{\lambda}

\DeclareMathOperator{\Fil}{Fil}
\DeclareMathOperator{\GL}{GL}
\DeclareMathOperator{\SL}{SL}
\DeclareMathOperator{\Hom}{Hom}
\DeclareMathOperator{\ord}{ord}

\DeclareMathOperator{\Div}{Div}
\DeclareMathOperator{\Gal}{Gal}
\DeclareMathOperator{\Sym}{Sym}

\DeclareMathOperator{\Sel}{Sel}
\DeclareMathOperator{\cores}{cor}
\DeclareMathOperator{\irr}{irr}
\DeclareMathOperator{\rd}{red}

\newcommand{\inj}{\hookrightarrow}
\newcommand{\red}[1]{\o{#1}}
\newcommand{\Vbar}{\o{V}}
\newcommand{\vpb}{\o{\varphi}}
\newcommand{\Dp}{\big| \psmallmat{p}{0}{0}{1}}
\newcommand{\Dpv}[1]{~\big| \psmallmat{#1}{0}{0}{1}}
\newcommand{\GQ}{G_\Q}

\newcommand{\pin}[2]{\pi^{#1}_{#2}}
\newcommand{\nun}[2]{\cores^{#1}_{#2}}

\newcommand{\FV}[1]{\Fil^{#1}(V_g)}

\newcommand{\nop}{$p$-non-ordinary}
\renewcommand{\th}{\text{th}}

\title{Mazur--Tate elements of non-ordinary modular forms}
\author{Robert Pollack and Tom Weston}

\address[Robert Pollack]{Department of Mathematics, Boston University, Boston, MA}
\email[Robert Pollack]{rpollack@math.bu.edu}
\thanks{The first author was supported by NSF grant DMS-0701153 and a Sloan Research Fellowship.}

\address[Tom Weston]{Dept.\ of Mathematics, University of
Massachusetts, Amherst, MA}
\email[Tom Weston]{weston@math.umass.edu}
\thanks{The second author was supported by NSF grant DMS-0700359.}

\subjclass[2000]{Primary 11R23; Secondary 11F33}

\begin{document}

\begin{abstract}
We establish formulae for the Iwasawa invariants of Mazur--Tate elements of cuspidal eigenforms, generalizing known results in weight 2.  Our first theorem deals with forms of ``medium" weight, and our second deals with forms of small slope.  We give examples illustrating the strange behavior which can occur in the high weight, high slope case.
\end{abstract}

\maketitle

\section{Introduction}

Fix an odd prime $p$, and let $f$ denote a cuspidal eigenform of weight $k \geq 2$ and level $\Gamma := \Gamma_0(N)$ with $p \nmid N$.   Throughout this introduction, we assume for simplicity that $f$ has rational Fourier coefficients.  Let $\rhobar_f : G_\Q \to \GL_2(\Fp)$ denote the associated residual Galois representation which we assume to be irreducible.
If $f$ is a $p$-ordinary form, then the $p$-adic $L$-function $L_p(f)$ is an Iwasawa function, and one can associate to $f$ (analytic) Iwasawa invariants $\mu(f) = \mu(L_p(f))$ and $\lambda(f) = \lambda(L_p(f))$.  

If $f$ is \nop, then the situation is quite different as $L_p(f)$ is no longer an Iwasawa function, and one does not have associated $\mu$- and $\lambda$-invariants.   
However, when $f$ has weight 2, constructions of Kurihara and Perrin-Riou produce {\it pairs} of $\mu$- and $\lambda$-invariants denoted by $\mu^\pm(f)$ and $\lambda^\pm(f)$ (see also \cite{Pollackthesis} when $a_p(f) = 0$).
These invariants are defined by working with the Mazur--Tate elements
$$
\tnf \in \Zp[G_n]
$$ 
attached to $f$;
here $G_n = \Gal(\Qn/\Q)$
where $\Qn$ is the $n^\th$ layer of the cyclotomic $\Zp$-extension of $\Q$.
These elements interpolate the algebraic part of special values of the $L$-series of $f$; in fact,
$L_p(f)$ can be reconstructed as a limit of the $\tnf$.

To define the Iwasawa invariants in the non-ordinary weight 2 case, one shows that the sequences $\{ \mu(\t{2n}{f}) \}$ and $\{ \mu(\t{2n+1}{f}) \}$ stabilize as $n \to \infty$; the limit of these sequences are the invariants $\mu^+(f)$ and $\mu^-(f)$.
For the $\lambda$-invariants, the sequence $\{ \lambda(\tnf) \}$ is unbounded, but grows in a regular manner: the invariants $\lambda^\pm(f)$ have the property that (for sufficiently large $n$)
$$
\lambda(\tnf) = q_n + \begin{cases}
\lambda^+(f)  & \text{~if~} 2 \mid n \\
\lambda^-(f)  & \text{~if~} 2 \nmid n, 
\end{cases}
$$
where
$$
q_n = \begin{cases}
p^{n-1} - p^{n-2} + \cdots + p - 1  & \text{~if~} 2 \mid n \\
p^{n-1} - p^{n-2} + \cdots + p^2 - p  & \text{~if~} 2 \nmid n. 
\end{cases}
$$

In \cite{GV,EPW,GIP}, the behavior of $\mu$- and $\lambda$-invariants under congruences was studied in the ordinary case for arbitrary weights and in the non-ordinary case in weight 2.  For instance, in the ordinary case, it was shown that if the $\mu$-invariant vanishes for one form, then it vanishes for all congruent forms.  In particular, the vanishing of $\mu$ only depends upon the residual representation $\rhobar =\rhobar_f$; we write $\mu(\rhobar)=0$ if this vanishing occurs.  Completely analogous results hold in the weight 2 non-ordinary case.  

The $\lambda$-invariant can change under congruences, but this change is expressible in terms of explicit local factors.  In fact, when $\mu(\rhobar)=0$, there exists some global constant $\lambda(\rhobar)$ such that the $\lambda$-invariant of any form with residual representation $\rhobar$ is given by $\lambda(\rhobar)$ plus some non-negative local contributions at places dividing the level of the form.

An analogous theory exists on the algebraic side for ordinary forms and weight 2 non-ordinary forms.  These invariants are built out of Selmer groups, and enjoy the congruence properties described above.  
Furthermore, the Mazur--Tate elements should control the size and structure of the corresponding Selmer groups.  For instance, in the non-ordinary case, 
the main conjecture predicts that
\begin{equation}
\label{eqn:algan}
\dim_{\Fp} \Sel_p(f/\Qn)[p] = \lambda(\tnf)
\end{equation}
when $\mu^\pm(f) = 0$ (see \cite{GIP}).  Here $\Sel_p(f/\Qn)$ is the $p$-adic Selmer group attached to $f$ over the field $\Qn$.  Moreover, Kurihara \cite[Conjecture 0.3]{Kurihara} conjectures that the Fitting ideals of these Selmer groups are generated by Mazur--Tate elements.

Whether or not the equality in (\ref{eqn:algan}) extends to higher weight non-ordinary forms is unknown.  Little is known about the size and structure of Selmer groups in this case.  In this paper, we instead focus on the right hand side of (\ref{eqn:algan}), and via congruences in the spirit of \cite{GV,EPW}, we attempt to describe the Iwasawa invariants of Mazur--Tate elements for non-ordinary forms which admit a congruence to some weight 2 form.   

\subsection{Theorem for medium weight forms}
We offer the following theorem which describes the Iwasawa invariants for ``medium weight" modular forms (compare to Corollary \ref{cor:invmedweight} in the text of the paper).
We note that the form $g$ which appears below is $p$-ordinary if and only if $\rhobarloc{f}$ is reducible (see section \ref{sec:modpreps}).

\begin{thmintro}
\label{thm:A}
Let $f$ be an eigenform in $S_k(\Gamma)$ which is \nop, and such that
\begin{enumerate}
\item $\rhobar_f$ is irreducible of Serre weight 2,
\item 
$2 < k < p^2+1$,
\vspace{.1cm}
\item 
$\rhobarloc{f}$ is not decomposable.
\end{enumerate}
Then there exists an eigenform $g \in S_2(\Gamma)$ with $a_\ell(f)  \equiv a_\ell(g) \pmod{p}$ for all primes $\ell \neq p$ such that if $\rhobarloc{f}$ is reducible (resp.\ irreducible), then
\begin{enumerate}
\item 
\label{item:muA}
$\mu(\tnf) = 0$ for $n \gg 0 \iff \mu(g) = 0$ (resp.\ $\mu^\pm(g)=0$);
\vspace{.2cm}
\item
if the equivalent conditions of (\ref{item:muA}) hold, then
$$
\lam(\tnf) =   
p^n - p^{n-1} +
\begin{cases}
 \lam(g) & \text{~if~}\rhobarloc{f} \text{~is~reducible}, \\
q_{n-1} + \lam^{\text{-}\ve_n}(g) & \text{~if~}\rhobarloc{f} \text{~is~irreducible}, 
\end{cases}
$$ 
for $n \gg 0$;
here $\ve_n$ equals the sign of $(-1)^{n}$.
\end{enumerate}
\end{thmintro}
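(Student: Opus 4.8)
The plan is to transfer information from the weight 2 form $g$ to $f$ via a congruence, using the Mazur--Tate elements as the bridge. First I would invoke the modularity/deformation-theoretic input (from the section on mod $p$ representations referenced in the text) to produce the companion form $g \in S_2(\Gamma)$ with $a_\ell(f) \equiv a_\ell(g) \pmod p$ for $\ell \neq p$; the key point is that since $\rhobar_f$ has Serre weight $2$, such a $g$ exists at the same tame level $\Gamma_0(N)$, and the dichotomy ``$\rhobarloc{f}$ reducible versus irreducible'' matches ``$g$ is $p$-ordinary versus $p$-non-ordinary.'' The hypothesis $2 < k < p^2+1$ is what keeps the weight small enough that the mod $p$ local representation $\rhobarloc{f}$ is controlled by a single parameter (essentially the Fontaine--Laffaille range up to $p^2$), which is what makes the two-case analysis exhaustive.

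Next I would compare the Mazur--Tate elements $\tnf$ and $\tn{g}$ modulo $p$. Since the Fourier coefficients away from $p$ agree mod $p$, the period-normalized special values that these elements interpolate agree mod $p$ at tame level; the subtlety is entirely at $p$, where $f$ and $g$ have different $U_p$-eigenvalues and different weights, so the relationship between $\tnf$ and the theta/stabilization operators must be worked out. I expect the body of the paper to have already established an identity expressing $\tnf$ (or its image in $\Fp[G_n]$) in terms of $\tn{g}$ twisted or modified by an explicit local factor at $p$ depending only on $\rhobarinert{f}$ — concretely, a factor whose $\mu$-invariant vanishes and whose $\lambda$-invariant accounts for the $p^n - p^{n-1}$ term and, in the irreducible case, the extra $q_{n-1}$. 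Given such a comparison, part (1) is immediate: $\mu(\tnf) = 0$ for $n \gg 0$ if and only if the corresponding $\mu$-invariant ($\mu(g)$ or $\mu^\pm(g)$) vanishes, because the congruence forces the mod $p$ Mazur--Tate elements to be nonzero simultaneously, and the weight 2 theory already tells us the weight 2 $\mu$-invariants stabilize.

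For part (2), assuming the $\mu$-invariants vanish, I would read off $\lambda(\tnf)$ from the same comparison. In the reducible (ordinary $g$) case, the weight 2 $\lambda$-invariant $\lambda(g) = \lambda(\tn{g})$ is eventually constant, and the local factor at $p$ contributes exactly $p^n - p^{n-1}$ to the degree, giving $\lambda(\tnf) = p^n - p^{n-1} + \lambda(g)$. In the irreducible (non-ordinary $g$) case, one instead uses the known formula $\lambda(\tn{g}) = q_n + \lambda^{\pm}(g)$ from the weight 2 non-ordinary theory recalled in the introduction; combined with the local factor one gets $p^n - p^{n-1} + q_{n-1} + \lambda^{-\ve_n}(g)$, where the parity shift $n \mapsto n-1$ in $q$ and the sign flip $\ve_n \mapsto -\ve_n$ both come from tracking how the local factor at $p$ permutes the even/odd cyclotomic characters — this bookkeeping of signs is the place where it is easiest to make an error and will need to be done carefully.

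The main obstacle, as I see it, is establishing the precise comparison identity between $\tnf$ and $\tn{g}$ at the prime $p$: away from $p$ the congruence is formal, but at $p$ one must control how the weight $k$ Mazur--Tate element, built from the weight $k$ modular symbol and the $U_p$-operator with a slope in the range forced by $2 < k < p^2+1$, reduces mod $p$ in terms of weight 2 data. This presumably relies on the explicit computation (earlier in the paper) of the ``theta elements'' $\t{n}{f}$ and their relation to $\rhobarinert{f}$ — in particular identifying which power of $p$ divides $\tnf$ and what the extra $\lambda$-contribution is — and on the non-decomposability hypothesis (3), which rules out the degenerate local situation where no such clean comparison exists. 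Once that identity is in hand, everything else is a matter of substituting the weight 2 formulas and carefully matching parities.
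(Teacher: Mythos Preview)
Your high-level outline is correct in spirit --- one does produce a weight $2$ form $g$ congruent to $f$, establish a mod $p$ relation between their Mazur--Tate elements, and then read off the invariants from the weight $2$ theory --- but the proposal has a genuine gap at the step you yourself flag as the ``main obstacle.''  The comparison identity is not something established earlier and then invoked here; it \emph{is} the content of the theorem, and your proposal does not supply the mechanism that produces it.

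Two specific points.  First, a congruence of Hecke eigenvalues away from $p$ does \emph{not} by itself yield a congruence of Mazur--Tate elements: $\tnf$ is built from values of the modular symbol $\varphi_f$, not directly from Fourier coefficients, so one needs a congruence of modular symbols, which in turn requires mod $p$ multiplicity one.  The actual argument runs through the $S_0(p)$-equivariant map $\alpha: H^1_c(\Gamma,\Vbar_{k-2}) \to H^1_c(\Gamma_0,\F)$ sending $P(X,Y)\mapsto P(0,1)$.  One must show $\alpha(\vpb_f^\pm)\neq 0$; then Ash--Stevens lifts this eigensymbol to a weight $2$ form on $\Gamma_0$, which (being non-ordinary in weight $2$) is $p$-old, coming from $g\in S_2(\Gamma)$.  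Mod $p$ multiplicity one then forces $\alpha(\vpb_f^\pm)=\vpb_g^\pm\big|\psmallmat{p}{0}{0}{1}$, and a direct calculation gives $\red{\tnf}=\nun{n}{n-1}(\red{\t{n-1}{g}})$.  The $p^n-p^{n-1}$ term is then simply the effect of $\nun{n}{n-1}$ on $\lambda$-invariants, not a mysterious ``local factor.''

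Second, your explanation of the hypothesis $2<k<p^2+1$ is not right: this range is far beyond Fontaine--Laffaille (which stops at $k\leq p$).  The bound enters only in proving $\alpha(\vpb_f^\pm)\neq 0$, equivalently that $\vpb_f^\pm$ is not in the image of the theta operator $\theta: H^1_c(\Gamma,\Vbar_{k-p-3})(1)\to H^1_c(\Gamma,\Vbar_{k-2})$.  This is done by an induction on the weight: anything in the image of $\theta$ comes from a form of weight $k-p-1$, whose local representation one analyzes in turn, tracking which $\rhobarinert{}$ can appear.  The induction shows that $I(1)$ (resp.\ $\psmallmat{\omega}{*}{0}{1}$ with $*$ nonzero) never arises in the image of $\theta$ for $k<p^2+1$; it is precisely here that both the weight bound and the non-decomposability hypothesis are used, and the argument genuinely breaks down at $k=p^2+1$.
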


Assuming that $\rhobar_g$ is globally irreducible, it is conjectured that $\mu(g)=0$ when $g$ is ordinary, and  $\mu^\pm(g)=0$ when $g$ is non-ordinary (see \cite{Greenberg,PR}).  Thus, the equivalent conditions of part (\ref{item:muA}) in Theorem \ref{thm:A} conjecturally hold.  Further, by combining Theorem \ref{thm:A} with the results of \cite{EPW,GIP}, one can express $\lam(\tnf)$ in terms $\lambda(\rhobar)$ and local terms at primes dividing $N$.

We note that if either of the latter two hypotheses of Theorem \ref{thm:A} are removed, then there exist forms which do not satisfy the conclusions of this theorem.  In fact, there are examples of modular forms with weight as small as $p^2+1$ for which the $\mu$-invariant of $\tnf$ is positive for arbitrarily large $n$.  In these examples, there is an obvious non-trivial lower bound on $\mu$ which we now explain.

\subsection{Lower bound for $\mu$}
We can associate to $f$ its (plus) modular symbol 
$$
\varphi_f = \varphi_f^+ \in H^1_c(\Gamma,V_{k-2}(\Qp))^+ \cong 
\Hom_\Gamma\left(\Div^0(\P^1(\Q)),V_{k-2}(\Qp)\right)^+;
$$
here $V_{k-2}(\Qp)$ is the space of homogeneous polynomials of degree $k-2$ in two
variables $X$,$Y$ over $\Qp$.
We normalize this symbol so that it takes values in $V_{k-2}(\Zp)$, but not $pV_{k-2}(\Zp)$.  We then define
\begin{align*}
\mum(f) = \mump(f) 
&= \min_{D \in \Delta_0} \ord_p \left(  \varphi_f(D)\Big|_{(X,Y)=(0,1)}  \right)\\
&= \min_{D \in \Delta_0} \ord_p \left( \text{coefficient~of~}Y^{k-2}\text{~in~} \varphi_f(D)  \right).
\end{align*}

Let $\G_n = \Gal(\Q(\mu_{p^n})/\Q)$.
The $n^\th$ level Mazur--Tate element in $\Zp[\G_n]$ is given by
$$
\ttnf = \sum_{a \in (\Z/p^n\Z)^\times} c_a \cdot \sigma_a
$$
with
$$
c_a = \text{coefficient~of~}Y^{k-2}\text{~in~}\varphi_f\left( \{ \infty \} - \{ a/p^n \} \right)
$$
where $\sigma_a$ corresponds to $a$ under the standard isomorphism $\G_n \cong (\Z/p^n\Z)^\times$.
The element $\tnf$ is defined as the projection of $\tt{n+1}{f}$ under the natural map $\Zp[\G_{n+1}] \to \Zp[G_n]$.
It follows immediately that
$$
\mum(\tnf) \geq \mum(f).
$$

\subsection{Theorem for low slope forms}
The following theorem applies to modular forms of arbitrary weight, but with small slope (compare to Corollary \ref{cor:invlowslope}).  (Note that this is a non-standard usage of the term slope as we are considering the valuation of the eigenvalue of $T_p$ as opposed to $U_p$.)

\begin{thmintro}
\label{thm:B}
Let $f$ be an eigenform in $S_k(\Gamma)$ such that 
\begin{enumerate}
\item $\rhobar_f$ is irreducible of Serre weight 2,
\item 
\label{item:slopeB}
$0 < \ord_p(a_p) < p-1$,
\vspace{.1cm}
\item $\rhobarloc{f}$ is not decomposable.
\end{enumerate}
Then 
$$
\mum(f) \leq \ord_p(a_p).
$$
Further, there exists an eigenform $g \in S_2(\Gamma)$ with $a_\ell(f)  \equiv a_\ell(g) \pmod{p}$ for all primes $\ell \neq p$ such that if $\rhobarloc{f}$ is reducible (resp.\ irreducible), then
\begin{enumerate}
\item \label{item:muB}
$\mu(\tnf) = \mum(f)$ for $n \gg 0 \iff \mu(g) = 0$ (resp.\ $\mu^\pm(g)=0$);
\vspace{.2cm}
\item 
\label{item:lambdaB}
if the equivalent conditions of (\ref{item:muB}) hold and $n \gg 0$, then 
$$
\lam(\tnf) =   
p^n - p^{n-1} +
\begin{cases}
 \lam(g) & \text{~if~}\rhobarloc{f} \text{~is~reducible}, \\
q_{n-1} + \lam^{\text{-}\ve_n}(g) & \text{~if~}\rhobarloc{f} \text{~is~irreducible}.
\end{cases}
$$ 
\end{enumerate}
\end{thmintro}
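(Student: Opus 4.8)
The plan is to reduce Theorem~\ref{thm:B} to Theorem~\ref{thm:A} by first establishing the bound $\mum(f) \le \ord_p(a_p)$ and then showing that the slope hypothesis \eqref{item:slopeB} forces the weight of a suitable twist of $f$ to lie in the ``medium'' range where the Mazur--Tate element computations already carry through. First I would prove the inequality $\mum(f) \le \ord_p(a_p)$. The key input is the classical relation between the Mazur--Tate elements at successive levels coming from the Hecke operator $T_p$: one has a ``two-variable'' distribution relation (a Perrin-Riou / Mazur--Tate-style functional equation) expressing $\ttnf$ in terms of $\tt{n-1}{f}$, $\tt{n-2}{f}$, and $a_p$. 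Evaluating at the relevant power of $Y$ (i.e.\ projecting via the map that picks out the coefficient of $Y^{k-2}$) and applying the operator $\psmallmat{p}{0}{0}{1}$, one gets that the ``$\mu_{\min}$'' of the pair $(\tt{n}{f}, \tt{n-1}{f})$ is controlled by $\ord_p(a_p)$; unwinding the recursion yields $\mum(f) \le \ord_p(a_p)$ once $n$ is large relative to $\ord_p(a_p)$. This is the precise analogue of the fact that, in weight $2$, the Mazur--Tate elements have bounded denominators once one divides by the appropriate power of $a_p$.

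Next I would produce the companion form $g \in S_2(\Gamma)$. The residual representation $\rhobar_f$ has Serre weight $2$ by hypothesis, so by the known cases of Serre's conjecture (or, since $N$ is prime to $p$ and the weight is $2$, by level/weight considerations in Hida--Coleman families) there is a classical weight $2$ eigenform $g$ of level $\Gamma$ with $\rhobar_g \cong \rhobar_f$, hence $a_\ell(g) \equiv a_\ell(f) \pmod{p}$ for all $\ell \ne p$. The local condition at $p$ (whether $\rhobarloc{f} = \rhobarloc{g}$ is reducible or irreducible, and in either case not decomposable) is preserved since it only depends on $\rhobar_f$. The form $g$ is ordinary precisely when $\rhobarloc{f}$ is reducible and non-ordinary when it is irreducible, matching the dichotomy in the statement.

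The heart of the argument is then the comparison of $\tnf$ with $\tn{g}$ via congruences, and here I would invoke the machinery already set up for Theorem~\ref{thm:A}: under the slope hypothesis $0 < \ord_p(a_p) < p-1$, the form $f$ lies in a Coleman family through $g$, and one can choose a twist (by a power of the cyclotomic character, i.e.\ a ``$\theta$-twist'') of $f$ whose weight is congruent to $k$ mod $p-1$ but lies in the interval $(2, p^2+1)$ demanded by hypothesis (2) of Theorem~\ref{thm:A} --- the slope bound $< p-1$ is exactly what guarantees the relevant overconvergent comparison remains valid and that no ``extra'' contribution to $\mu$ appears beyond $\mum(f)$. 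Applying Corollary~\ref{cor:invmedweight} (the in-text form of Theorem~\ref{thm:A}) to this twist, and tracking the effect of the twist on the $\mu$- and $\lambda$-invariants of Mazur--Tate elements (which is a clean, explicit shift), gives both \eqref{item:muB} and the formula in \eqref{item:lambdaB}; the constants $p^n - p^{n-1}$, $q_{n-1}$, and the sign $\ve_n = (-1)^n$ come out identically to the medium-weight case because they are governed by the same local behavior of $\rhobarloc{f}$.

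The main obstacle I anticipate is controlling the precise power of $p$ in the comparison between $\tnf$ and the Mazur--Tate element of the medium-weight twist: one must show that the inequality $\mum(\tnf) \ge \mum(f)$ is in fact an equality for $n \gg 0$ (given $\mu(g) = 0$ or $\mu^\pm(g) = 0$), which requires a lower bound matching the upper bound $\mum(f) \le \ord_p(a_p)$ and, crucially, that the division by the $a_p$-factor does not interact badly with the congruence to $g$ when $\ord_p(a_p) > 0$. Verifying that the slope restriction $\ord_p(a_p) < p-1$ is exactly what makes this work --- rather than merely sufficient --- is the delicate point, and it is where the argument genuinely goes beyond a formal deduction from Theorem~\ref{thm:A}.
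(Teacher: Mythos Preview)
Your proposal has a genuine gap: the reduction to the medium-weight case via ``twisting into the range $(2,p^2+1)$'' does not work. A $\theta$-twist shifts the weight by $p+1$, and Coleman families do not hand you a classical form of small weight whose Mazur--Tate elements are comparable to those of $f$; there is simply no mechanism that takes a form of arbitrary weight $k$ to one satisfying hypothesis~(2) of Theorem~\ref{thm:A} while preserving the relevant data. The slope bound $\ord_p(a_p)<p-1$ is not used in the paper to control any overconvergent comparison of this sort.

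The paper's argument is structurally different. One introduces a $\Gamma_0$-stable filtration $\Fil^r(V_{k-2})$ (and a refinement $\Fil^{r,s}$) on the coefficient module and shows directly that if $\varphi_f$ takes values in $\Fil^r$ then $\ord_p(a_p)\ge r$; this is how the inequality $\mum(f)\le\ord_p(a_p)$ is obtained, and it genuinely requires looking at \emph{all} coefficients of $\varphi_f$ (via the Atkin--Lehner involution), not just the $Y^{k-2}$-coefficients visible in the three-term relation. The graded pieces $\Fil^{r,s}/\Fil^{r,s+1}$ are one-dimensional $S_0(p)$-modules, so the image of $\varphi_f$ in one of them produces a weight-$2$ eigensymbol on $\Gamma_1$ with a specific nebentypus. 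A case analysis of the possible $\rhobarloc{}$ for such symbols, together with the hypothesis $k(\rhobar_f)=2$ and indecomposability, forces $s=r$; the bound $r<p-1$ (coming from the slope hypothesis) is exactly what makes this case analysis go through. Once $s=r$, the resulting weight-$2$ symbol is $D\mapsto \varpi^{-a}\varphi_f(D)|_{(0,1)}\pmod{\varpi}$, and from there the argument is identical to Theorem~\ref{thm:A}: one finds $g\in S_2(\Gamma)$ with $\alpha(\vpb_f)=\vpb_g|\psmallmat{p}{0}{0}{1}$ and reads off the congruence of Mazur--Tate elements. Producing $g$ by Serre's conjecture alone, as you suggest, would give the Hecke congruence but not this explicit relation of modular symbols, which is what actually drives the comparison of $\tnf$ and $\t{n-1}{g}$.
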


Note that the conclusions of Theorem \ref{thm:B} are the same as the conclusions of Theorem \ref{thm:A} except that the $\mu$-invariants tend to $\mum(f)$ rather than to $0$.  

Condition (\ref{item:slopeB}) in Theorem \ref{thm:B} is necessary as there exist forms $f$ of slope $p-1$ which do not satisfy conclusion (\ref{item:lambdaB}) of the theorem.  In fact, the values $\lambda(\tnf) - q_{n+1}$ grow without bound for these forms.  We will discuss these exceptional forms after sketching the proofs of Theorems \ref{thm:A} and \ref{thm:B}.

\subsection{Sketch of proof of Theorem \ref{thm:A}}
First we consider the proof of Theorem \ref{thm:A} in the case when $k=p+1$.  The map 
\begin{align*}
V_{p-1}(\Zp) &\to \Fp \\
P(X,Y) &\mapsto P(0,1) \pmod{p}
\end{align*}
induces a map
$$
\alpha : H^1_c(\Gamma,V_{p-1}(\Zp)) \to H^1_c(\Gamma_0,\Fp) 
$$
where $\Gamma_0 = \Gamma_0(Np)$.  (Note that we have first composed with restriction to level $\Gamma_0$.)  The map $\alpha$ is equivariant for the full Hecke-algebra, where at $p$ we let the Hecke-algebra act on the source by $T_p$ and on the target by $U_p$.

By results of Ash and Stevens \cite[Theorem 3.4a]{AS}, we have that $\alpha(\varphi_f) \neq 0$.  The system of Hecke-eigenvalues of $\alpha(\varphi_f)$ then arises as the reduction of the system of Hecke-eigenvalues of some eigenform $h \in S_2(\Gamma_0)$ (see \cite[Proposition 2.5a]{AS}).   This form $h$ is necessarily non-ordinary, and since $h$ has weight 2, it is necessarily $p$-old.  Let $g$ be the associated form of level $\Gamma$, and let $\vpb_g$ denote the reduction mod $p$ of $\varphi_g$, the (plus) modular symbol attached to $g$.

A direct computation shows that $\vpb_g \Dp$ is a Hecke-eigensymbol for the full Hecke-algebra with the same system of Hecke-eigenvalues as $\alpha(\varphi_f)$.  (The analogous statement for $\vpb_g$ is false as this is an eigensymbol for $T_p$ and not for $U_p$.)  By mod $p$ multiplicity one, we then have
$$
\alpha(\varphi_f) = \vpb_g \Dp.
$$
Note that we can insist upon an equality here as $\varphi_g$ is only well-defined up to scaling by a unit.  This equality of modular symbols implies the following relation of Mazur--Tate elements:
$$
{\theta_n(f)} \equiv \nun{n}{n-1}({\theta_{n-1}(g)}) \pmod{p}
$$
where $\nun{n}{n-1} : \Fp[G_{n-1}] \to \Fp[G_n]$ is the corestriction map.  Since
$$
\mu(\nun{n}{n-1}(\theta)) = \mu(\theta)
~\text{~and~}~\lambda(\nun{n}{n-1}(\theta)) = p^n - p^{n-1} + \lambda(\theta),
$$
Theorem \ref{thm:A} follows when $k=p+1$.

To illustrate how the proof proceeds for the remaining weights in the range $p+1 < k < p^2+1$, we consider the case when $k=2p$.  If we can show that $\alpha(\varphi_f)  \neq 0$, then the above proof goes through verbatim.  So assume that $\varphi_f$ is in the kernel of $\alpha$.  Then results of Ash and Stevens \cite[Theorem 3.4c]{AS} imply that there is an eigenform $h \in S_{p-1}(\Gamma)$ such that $\rhobar_h \otimes \omega \cong \rhobar_f$;
here $\omega$ is the mod $p$ cyclotomic character.  
Since the weight of $h$ is less than $p+1$, Fontaine--Lafaille theory gives an explicit description of $\rhobarloc{h}$.  However, as long as $\rhobarloc{f}$ is indecomposable, this description contradicts the fact that $\rhobar_f \cong \rhobar_h \otimes \omega$ has Serre weight 2.  

We illustrate this argument once more when $k=3p-1$ to show the general picture.  In this case, if $\alpha(\varphi_f) = 0$, then there exists an eigenform $h \in S_{2p-2}(\Gamma)$ such that $\rhobar_h \otimes \omega = \rhobar_f$.  Since the weight of $h$ is out of the Fontaine--Lafaille range, we cannot immediately determine the structure of $\rhobarloc{h}$.  Instead, we consider the modular symbol $\varphi_h \in H^1_c(\Gamma,V_{2p-4}(\Zp))$.  If $\alpha(\vpb_h) \neq 0$, then as before, $h$ is congruent to a weight 2 form, and we can describe the structure of $\rhobarloc{h}$.  On the other hand, if $\alpha(\vpb_h) = 0$, then $h$ is congruent to some eigenform $h' \in S_{p-3}(\Gamma)$ such that $\rhobar_{h'} \otimes \omega \cong \rhobar_h$.  As the weight of $h'$ is small, we can determine the structure of $\rhobarloc{h'}$.  In either case, this description implies that $f$ cannot have Serre weight 2.

For the remaining weights, one proceeds similarly, inductively decreasing the weight of the form being considered.  These arguments work up until weight $p^2+1$ when in fact there can be an eigensymbol with Serre weight 2 in the kernel of $\alpha$.  

\subsection{Sketch of proof of Theorem \ref{thm:B}}
For the proof of Theorem \ref{thm:B}, we consider the following $\Gamma_0$-stable
filtration on $V_g(\Zp)$:
$$
\Fil^r(V_g) = 
\left\{ \sum_{j=0}^g b_j X^j Y^{g-j} \in V_g(\Zp) ~:~
p^{r-j} \mid b_j \text{~for~} 0 \leq j \leq r-1
\right\}.
$$
One computes that if $\varphi_f$ takes values in $j$-th step of this filtration, then $\ord_p(a_p)$ is at least $j$.  In particular,  if $r =  \ord_p(a_p) +1$, then $\varphi_f$ cannot take of all its values in $\Fil^r(V_{k-2})$.  The Jordan-Holder factors of $V_{k-2}(\Zp)/\Fil^r(V_{k-2})$ are all isomorphic to $\Fp$ with $\gamma = \psmallmat{a}{b}{c}{d} \in \Gamma_0$ acting by multiplication by $a^i \det(\gamma)^j$ for some $i$ and $j$.  

The image of $\varphi_f$ in $H^1_c(\Gamma_0,V_{k-2}(\Zp)/\Fil^r(V_{k-2}))$ is non-zero, and thus must contribute to the cohomology of one of these Jordan-Holder factors.  In particular, there exists a congruent eigenform $h$ of weight 2 on $\Gamma_0(N) \cap \Gamma_1(p)$.  Again by computing the possibilities for $\rhobarloc{h}$, one can determine the exact Jordan-Holder factor to which $\varphi_f$ contributes as long as $r$ satisfies the bound in the hypothesis (\ref{item:slopeB}) of Theorem \ref{thm:B}.  

As a result of this computation, one sees that the function
\begin{align*}
\Delta_0 &\to \Fp \\
D &\mapsto \frac{ \varphi_f(D)\Big|_{(X,Y)=(0,1)}}{p^{\mum(f)}} \pmod{p}
\end{align*}
is a modular symbol of level $\Gamma_0$.  One then proceeds as in the proof of Theorem \ref{thm:A} to construct a congruent weight 2 form, and then deduce the appropriate congruence of Mazur--Tate elements.

\subsection{An odd example}
We close this introduction with one strange example.  For $p=3$, there is an eigenform $f$ in $S_{18}(\Gamma_0(17),\Qvbar{3})$ which is an eigenform of slope 5 whose residual representation is isomorphic to the $3$-torsion in $X_0(17)$.  (Note that $X_0(17)[3]$ is  locally irreducible at $3$ as $X_0(17)$ is supersingular at 3.)  The form $f$ does not satisfy the hypotheses of Theorems \ref{thm:A} or \ref{thm:B} as its weight and slope are too big.  For this form, we can show that $\mum(f) = \mu(\tnf)=4$ for all $n \geq 0$, and 
$$
\lambda(\tnf) = p^n-p^{n-2} + q_{n-2}
$$
for $n \geq 2$.  This behavior of the $\lambda$-invariants is different from the patterns in Theorems \ref{thm:A} and \ref{thm:B} where the $\lambda$-invariants equal $q_{n+1} = p^n-p^{n-1} + q_{n-1}$ up to a bounded constant.  As explained in section \ref{sec:example}, this different behavior can be explained in terms of the failure of multiplicity one at level $Np^r$ with $r \geq 2$.  

\subsection{Outline}
The outline of the paper is as follows: we begin by
reviewing the definition of Mazur--Tate elements and their relation to
$p$-adic $L$-functions.  As our focus will be on the Mazur--Tate elements,
we then discuss finite-level Iwasawa invariants, recalling known results
in the ordinary case (section 3) and the weight $2$ non-ordinary case (section 4).   In section 5 (resp.\ section 6) we prove Theorem 1 (resp.\ Theorem 2) 
on Iwasawa invariants of Mazur--Tate elements for 
non-ordinary modular forms of medium weight (resp.\ low slope).
In section 7, we explain in detail an example of this odd behavior of $\lambda$-invariants for a form of high weight and slope.

~\\
{\it Acknowledgements}: We owe a debt to Matthew Emerton for numerous enlightening conversations on this topic.  We heartily thank Kevin Buzzard for several suggestions which led to the proof of Theorem \ref{thm:B}.  

~\\
\noindent
{\bf Notation:}
Throughout the paper we fix an odd prime $p$.
Let $\Zpbar$ denote the ring of integers of $\Qpbar$, and for $x \in \Zpbar$, let $\red{x}$ denote the image of $x$ in $\Fpbar$.  
For a finite extension $\O$ of $\Zp$, we write $\varpi$ for a uniformizer of $\O$, and $\F$ for its residue field.  We fix an embedding
$\Qbar \inj \Qpbar$.  For an integer $n$, we write $\ve_n$ for the sign
of $(-1)^n$.

\section{Mazur--Tate elements of modular forms}

Let $f$ be a cuspidal eigenform of weight $k$ on a congruence subgroup $\Gamma = \Gamma_0(N)$.  Our goal in this section
is to define the $p$-adic Mazur--Tate elements $\ttnf$
attached to $f$.  These are elements of the group ring
$\Zpbar[\G_n]$ for all $n \geq 1$; here
\begin{align*}
\G_{n} = \Gal(\Q(\mu_{p^n})/\Q) &\cong (\Z/p^n\Z)^\times \\
\sigma_a &\leftrightarrow a 
\end{align*}
where $\sigma_a(\zeta)=\zeta^a$ for $\zeta \in \mu_{p^n}$.
The utility of these elements is that they allow one to recover normalized
special values of twists of the $L$-function of $f$; see
Proposition~\ref{prop:interpolation} for a precise statement.

\subsection{Mazur--Tate elements}

Let $R$ be a commutative ring, and set $V_g(R) = \Sym^{g}(R^2)$ which we view as the space of homogeneous polynomials of degree $g$ with coefficients in $R$
in two variables
$X$ and $Y$.  We endow $V_g(R)$ with a right action of $\GL_2(R)$ by 
$$
(P | \gamma)(X,Y) = P( (X,Y) \gamma^*) = P(dX - cY,-bX+aY)
$$
for $P \in V_g(R)$ and $\gamma \in \GL_2(R)$.

Let $\Gamma \subseteq \SL_2(\Z)$ denote a congruence subgroup.  Recall the canonical isomorphism of Hecke-modules (see \cite[Proposition 4.2]{AS})
$$
H^1_c(\Gamma,V_g(R)) \cong \Hom_\Gamma\left(\Div^0(\P^1(\Q)),V_g(R)\right)
$$
where the target of the map equals the collection of additive maps
$$
\left\{ \varphi: \Div^0(\P^1(\Q)) \to V_g(R) ~:~ \varphi(\gamma D) | \gamma = \varphi(D) \text{~for all~} \gamma \in \Gamma \right\}.
$$
As this isomorphism is canonical, we will implicitly identify these two spaces
from now on; we refer to them as spaces of {\it modular symbols}.

For a modular symbol $\varphi \in H^{1}_c(\Gamma,V_g(R))$, we define
the associated {\it Mazur--Tate element} of level $n \geq 1$ by
\begin{equation} \label{eqn:stickel}
\tt{n}{\varphi} = \sum_{a \in (\Z/p^n\Z)^\times} \varphi\left( \{ \infty \} -
\{ a/p^n \} \right) \Big|_{(X,Y) = (0,1)} \cdot \sigma_a \in R[\G_n].
\end{equation}

When $R$ contains $\Zp$, we may decompose the Mazur--Tate elements
$\tt{n}{\varphi}$ as follows.  Write
$$\G_{n+1} \cong G_{n} \times (\Z/p\Z)^{\times}$$
with $G_{n}$ cyclic of order $p^n$.  Let $\omega : (\Z/p\Z)^{\times} \to
\Zp^{\times}$ denote the usual embedding of the $(p-1)^{\text{st}}$
roots of unity in $\Zp$.  For each $i$, $0 \leq i \leq p-2$, we obtain
an induced map $\omega^i : R[\G_{n+1}] \to R[G_n]$, and we define
$\tni{\varphi} = \omega^i(\tt{n+1}{\varphi})$.  We simply write
$\tn{\varphi}$ for $\theta_{n,0}(\varphi)$.  

\subsection{Modular forms}

One can associate to 
each eigenform $f$ in $S_k(\Gamma,\C)$ a modular symbol
$\xi_f$ in $H^1_c(\Gamma,V_{k-2}(\C))$ such that
$$
\xi_f(\{r\} - \{s\}) = 2 \pi i \int_s^r f(z) (zX+Y)^{k-2} dz
$$
for all $r,s \in \P^1(\Q)$; here we write $\{r\}$ for the divisor associated
to $r \in \Q$.
The symbol $\xi_f$ is a Hecke-eigensymbol with the same Hecke-eigenvalues as $f$.

The matrix $\iota := \psmallmat{-1}{0}{0}{1}$ acts as an involution on 
these spaces of modular symbols, and thus $\xi_f$ can be uniquely written as $\xi^+_f + \xi^-_f$ with $\xi^\pm_f$ in the $\pm1$-eigenspace of $\iota$.  By a theorem of Shimura \cite{Shimura}, there exists complex numbers $\Omega^\pm_f$ such that $\xi^\pm_f$ takes values in $V_{k-2}(K_f) \Omega^\pm_f$ where $K_f$ is the field of Fourier coefficients of $f$.    We can thus view
$\varphi^\pm_f := \xi^\pm_f / \Omega^\pm_f$ as taking values in 
$V_{k-2}(\Qpbar)$ via our fixed embedding $\Qbar \inj \Qpbar$.  
Set $\varphi_f = \varphi_f^+ + \varphi_f^-$, which of course depends upon the choices of $\Omega_f^+$ and $\Omega_f^-$.

Throughout this paper it will be crucial that we have normalized these
choices of periods appropriately.
For any $\varphi \in H^1_c(\Gamma,V_{k-2}(\Qpbar))$, define
$$
|| \varphi || := \max_{D \in \Delta_0} || \varphi(D) || 
$$
where for $P \in V_{k-2}(\Qpbar)$, $||P||$ is given by the maximum of the absolute values of the coefficients of $P$.   Let $\O_f$ denote the ring of integers of the completion of the image of
$K_f$ in $\Qpbar$.

\begin{defn}
We say that $\Omega^+_f$ and $\Omega^-_f$ are {\it cohomological periods} for $f$ (with respect to our fixed embedding $\Qbar \inj \Qpbar$), if $|| \varphi^+_f || = || \varphi^-_f || = 1$; that is, if each of $\varphi^+_f$ and
$\varphi^-_f$
takes values in $V_{k-2}(\O_f)$, and each takes on at least one value with at least one coefficient in $\O_f^\times$.  Such periods clearly always exist for each $f$  and are well-defined up to scaling by elements $\alpha \in K_f$ such that the image of $\alpha$ in $\Qpbar$ is a $p$-adic unit.
\end{defn}

We now write $\ttnf$ for the Mazur--Tate element $\tt{n}{\varphi_f}$
computed with respect to cohomological periods.  As before, we obtain Mazur--Tate elements
$$\tnfi \in \O_f[G_n]$$
for each $n \geq 1$ and $i$, $0 \leq i \leq p-2$.  We simply write
$\tnf$ for $\theta_{n,0}(f)$.

\begin{remark} 
We note that our choice of periods {\it forces} these Mazur--Tate elements to have integral coefficients.  This should be contrasted with the case of elliptic curves where the choice of the N\'eron period does not {\it a priori} guarantee integrality.
\end{remark} 
 
The following proposition describes the interpolation property of Mazur--Tate elements for primitive characters.

\begin{prop} \label{prop:interpolation}
If $\chi$ is a primitive Dirichlet character of conductor $p^n>1$, then 
$$
\chi(\ttnf) = \tau(\chi) \cdot \frac{L(f,\chibar,1)}{\Omega^\ve_f}
$$
where $\ve_f$ equals the sign of $\chi(-1)$.
\end{prop}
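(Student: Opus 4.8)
The plan is to unwind the definition of the Mazur--Tate element $\ttnf$ and recognize the resulting expression as a twisted sum of period integrals, which is precisely the recipe for the algebraic part of $L(f,\chibar,1)$. First I would expand
$$
\chi(\ttnf) = \sum_{a \in (\Z/p^n\Z)^\times} \chi(\sigma_a) \cdot \varphi_f\bigl(\{\infty\} - \{a/p^n\}\bigr)\Big|_{(X,Y)=(0,1)},
$$
using $\chi(\sigma_a) = \chi(a)$ under the isomorphism $\G_n \cong (\Z/p^n\Z)^\times$. Here only the $\varphi_f^{\ve}$-component survives, where $\ve$ is the sign of $\chi(-1)$: indeed the matrix $\iota = \psmallmat{-1}{0}{0}{1}$ sends $\{a/p^n\}$ to $\{-a/p^n\}$, and evaluating at $(0,1)$ is $\iota$-invariant up to the sign $(-1)^{k-2}$ absorbed into the period normalization, so pairing against $\chi$ picks out the eigencomponent matching $\chi(-1)$. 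This reduces the claim to a statement about $\xi_f^{\ve} = \Omega_f^\ve \varphi_f^\ve$.

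Next I would substitute the defining integral formula for $\xi_f$, namely $\xi_f(\{r\}-\{s\}) = 2\pi i \int_s^r f(z)(zX+Y)^{k-2}\,dz$, and extract the coefficient of $Y^{k-2}$ (i.e.\ set $(X,Y)=(0,1)$), obtaining $2\pi i \int_{a/p^n}^{\infty} f(z)\,dz$ for each $a$. Thus
$$
\chi(\ttnf) = \frac{1}{\Omega_f^\ve} \sum_{a \in (\Z/p^n\Z)^\times} \chi(a) \cdot 2\pi i \int_{a/p^n}^{\infty} f(z)\,dz.
$$
The remaining work is the classical computation identifying $\sum_a \chi(a)\,2\pi i\int_{a/p^n}^\infty f(z)\,dz$ with $\tau(\chi)\cdot L(f,\chibar,1)$: one writes $L(f,\chibar,1) = \sum_m \chibar(m)a_m(f)/m$, uses the integral representation $L(f,\chibar,1)$ as a period of $f$ twisted by $\chibar$, and applies the Gauss sum identity $\chibar(m)\tau(\chi) = \sum_{a} \chi(a) e^{2\pi i a m/p^n}$ valid because $\chi$ is primitive of conductor $p^n$. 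Substituting this into the $q$-expansion of $f$ and integrating term by term converts the $\chibar$-twisted $L$-value into the sum of modular-symbol values above, with the Gauss sum $\tau(\chi)$ emerging as the proportionality constant.

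The only genuinely delicate point is the bookkeeping of signs and the verification that the $\ve$-eigencomponent is the correct one: one must check that evaluating $(zX+Y)^{k-2}$ at $(X,Y)=(0,1)$ interacts with $\iota$ exactly so that $\chi(\ttnf)$ depends only on $\varphi_f^\ve$ with $\ve = \operatorname{sgn}\chi(-1)$, and that the period appearing is $\Omega_f^\ve$ and not $\Omega_f^{-\ve}$. I expect this sign analysis, together with confirming the exact normalization of the period integral representation of $L(f,\chibar,1)$ (no stray powers of $2\pi i$, $\pi^{k-1}$, $(k-2)!$, or conductor factors), to be the main obstacle; the analytic manipulation with the Gauss sum is standard. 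Everything else is a direct substitution once the modular symbol $\xi_f$ is expressed via its defining integral.
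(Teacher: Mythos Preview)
Your argument is correct and is the standard computation underlying this interpolation formula. The paper itself does not give a proof at all: it simply cites \cite[(8.6)]{MTT}, so you have in fact supplied the details the authors defer to the literature. Your sketch matches exactly the derivation one finds in Mazur--Tate--Teitelbaum, including the Gauss-sum identity for primitive $\chi$ and the sign analysis isolating the $\ve$-eigencomponent.
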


\begin{proof}
This proposition follows from \cite[(8.6)]{MTT}.
\end{proof}

\begin{remark}
We note that the classical Stickelberger element
$$
\vartheta_n = \frac{1}{p^n} \sum_{a \in (\Z/p^n\Z)^\times}
a \cdot \sigma_a^{-1} \in \Q[\G_n]
$$
has a similar interpolation property: for $\chi$ a primitive character on $\G_n$, 
$$
\chi(\vartheta_n) = -L(\chibar,0).
$$
\end{remark}

\subsection{Three-term relation}

Let $\pin{n}{n-1}: \O[G_n] \to \O[G_{n-1}]$ be the natural projection, and let $\nun{n}{n-1}: \O[G_{n-1}] \to \O[G_{n}]$ denote the corestriction map given by
$$\nun{n}{n-1}(\sigma) =
\sum_{\substack{\tau \mapsto \sigma \\ \tau \in G_n}} \tau$$
for $\sigma \in G_n$.

We then have the following three-term relation among various Mazur--Tate elements.  

\begin{prop}
\label{prop:Qseqn}
If $p \nmid N$, we have
\begin{equation}
\label{eqn:Qseqn}
\pin{n+1}{n}(\ti{n+1}{f}) = a_p \tnfi - p^{k-2} \nun{n}{n-1}(\ti{n-1}{f}).
\end{equation}
for $n \geq 1$ and any $i$.
\end{prop}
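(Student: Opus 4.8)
The plan is to verify the relation directly at the level of modular symbols, exploiting the fact that the Mazur--Tate element $\ti{n+1}{f}$ is a sum of values of $\varphi_f$ against divisors of the form $\{\infty\} - \{a/p^{n+1}\}$, and that $f$ is an eigenform for $T_p$ with $T_p \varphi_f = a_p \varphi_f$. First I would recall the standard formula for the Hecke operator $T_p$ acting on modular symbols of level $\Gamma = \Gamma_0(N)$ with $p \nmid N$: up to the usual conventions, $T_p = \psmallmat{p}{0}{0}{1} + \sum_{j=0}^{p-1} \psmallmat{1}{j}{0}{p}$ as a correspondence on $\Div^0(\P^1(\Q))$, where the matrices act on divisors on the left (and, dually, on $V_{k-2}$-values on the right via the action $P|\gamma$ fixed in Section 2). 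The eigenform equation $\varphi_f | T_p = a_p \varphi_f$ then gives, for each cusp $a/p^n$, an identity expressing $a_p \cdot \varphi_f(\{\infty\}-\{a/p^n\})\big|_{(X,Y)=(0,1)}$ as a sum of values of $\varphi_f$ at divisors supported on cusps of the form $(a+jp^n)/p^{n+1}$ and $a/p^{n-1}$, after applying the relevant matrices.

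Next I would track what each of the three terms in $T_p$ contributes after specialization at $(X,Y)=(0,1)$ and after organizing the sum over $a \in (\Z/p^{n+1}\Z)^\times$ or $(\Z/p^n\Z)^\times$ into group-ring elements. The term $\sum_{j=0}^{p-1}\psmallmat{1}{j}{0}{p}$ is the one that produces, when reindexed, the projection $\pin{n+1}{n}$ of the level-$(n+1)$ element: the $p$ lifts $a + jp^n$ of $a \bmod p^n$ to $(\Z/p^{n+1}\Z)$ are exactly the fibers of $G_{n+1}\to G_n$, and the matrix $\psmallmat{1}{j}{0}{p}$ sends $a/p^{n+1}$ to $(a - jp^{n+1})/p^{n+2}$-type cusps in a way that, after the substitution $(X,Y)=(0,1)$, reproduces the coefficients $c_a$ at level $n+1$; here one must be careful that the action on $V_{k-2}$ evaluated at $Y^{k-2}$ leaves the relevant coefficient essentially unchanged for upper-triangular matrices with lower-right entry a power of $p$ — this is where the factor $p^{k-2}$ will eventually surface. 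Meanwhile the term $\psmallmat{p}{0}{0}{1}$ lowers the level from $n$ to $n-1$ and, after reorganizing the sum over $a$, yields the corestriction $\nun{n}{n-1}$ of the level-$(n-1)$ element; the matrix $\psmallmat{p}{0}{0}{1}$ acts on $P(X,Y)$ by $P(X, pY)$ (per the formula $(P|\gamma)(X,Y)=P(dX-cY,-bX+aY)$ with $\psmallmat{a}{b}{c}{d}=\psmallmat{p}{0}{0}{1}$, so $d=1$, $a=p$, giving $P(X,pY)$), hence multiplies the coefficient of $Y^{k-2}$ by $p^{k-2}$. Collecting: $a_p \tnfi = \pin{n+1}{n}(\ti{n+1}{f}) + p^{k-2}\nun{n}{n-1}(\ti{n-1}{f})$, which rearranges to \eqref{eqn:Qseqn}.

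I expect the main obstacle to be bookkeeping rather than conceptual: getting the normalizations exactly right — the left-versus-right action conventions on divisors and on $V_{k-2}(R)$, the precise set of coset representatives for $T_p$ at level $\Gamma_0(N)$ with $p\nmid N$, and, most delicately, verifying that the passage to the quotient $\omega^i$-components commutes with everything (the statement is claimed "for any $i$"). One clean way to handle the last point is to first prove the identity for the full elements $\tt{n+1}{\varphi_f}$ and their images under the projection $\Zpbar[\G_{n+1}]\to \Zpbar[\G_n]$, i.e. at the level of $\vartheta$'s, and then apply $\omega^i$, which is a ring homomorphism on group rings and hence commutes with $\pin{n+1}{n}$ and $\nun{n}{n-1}$ in the appropriate sense; this reduces the problem to a single computation independent of $i$. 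Finally, one should double-check the boundary case $n=1$ separately, since there the map $G_0$ is trivial and the corestriction $\nun{1}{0}$ is just multiplication by the norm; the Hecke relation still holds but the reindexing of cusps degenerates slightly and deserves a direct check.
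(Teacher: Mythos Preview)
Your proposal is correct and is exactly the ``straightforward computation'' the paper alludes to (the paper's proof simply cites \cite[(4.2)]{MTT}): apply $T_p$ to $\varphi_f$ at the divisors $\{\infty\}-\{a/p^n\}$, evaluate at $(X,Y)=(0,1)$, and reindex, with the $\psmallmat{1}{j}{0}{p}$-terms assembling into $\pin{n+1}{n}(\ti{n+1}{f})$ and the $\psmallmat{p}{0}{0}{1}$-term producing $p^{k-2}\nun{n}{n-1}(\ti{n-1}{f})$. A couple of your intermediate sentences have garbled indexing (e.g.\ ``$\psmallmat{1}{j}{0}{p}$ sends $a/p^{n+1}$ to $(a-jp^{n+1})/p^{n+2}$''), but the core computation and the identification of where the $p^{k-2}$ appears are right, and proving it first for the $\vartheta$'s and then applying $\omega^i$ is the clean way to handle all $i$ at once.
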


\begin{proof}
This proposition follows from \cite[(4.2)]{MTT} and a straightforward computation.
\end{proof}

\subsection{Some lemmas}

The following computations will be useful later in the
paper.

\begin{lemma}
\label{lemma:degen}
For $\varphi \in H^1_c(\Gamma,V_g(\O))$ and $n \geq 1$, we have
$$
\tni{\varphi \Dp} = p^g \cdot \nun{n}{n-1} \left( \ti{n-1}{\varphi}\right).
$$
\end{lemma}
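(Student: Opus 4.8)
The plan is to establish the identity first in the group ring $\O[\G_{n+1}]$, as an honest equality of Mazur--Tate elements, and then push it down via the projector $\omega^i$.

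For the first step I unwind the action of $\alpha := \psmallmat{p}{0}{0}{1}$. As a fractional linear transformation $\alpha$ sends $z \mapsto pz$, so $\alpha\bigl(\{\infty\}-\{a/p^{n+1}\}\bigr) = \{\infty\}-\{a/p^n\}$; and on $V_g(\O)$ the formula $(P|\gamma)(X,Y)=P(dX-cY,-bX+aY)$ gives $(P|\alpha)(X,Y) = P(X,pY)$. Hence, by the definition of the $|\alpha$-action on modular symbols,
$$
(\varphi\Dp)\bigl(\{\infty\}-\{a/p^{n+1}\}\bigr)(X,Y) = \varphi\bigl(\{\infty\}-\{a/p^n\}\bigr)(X,pY),
$$
and since setting $(X,Y)=(0,1)$ extracts the $Y^g$-coefficient of a homogeneous polynomial of degree $g$ — while the value at $(0,p)$ is $p^g$ times the value at $(0,1)$ — we get
$$
(\varphi\Dp)\bigl(\{\infty\}-\{a/p^{n+1}\}\bigr)\Big|_{(X,Y)=(0,1)} = p^g \cdot \varphi\bigl(\{\infty\}-\{a/p^n\}\bigr)\Big|_{(X,Y)=(0,1)}.
$$
Substituting this into the defining formula \eqref{eqn:stickel} for $\tt{n+1}{\varphi\Dp}$ and regrouping the sum over $a \in (\Z/p^n\Z)^\times$... more precisely over $a \in (\Z/p^{n+1}\Z)^\times$ according to the residue class $b := a \bmod p^n$, the inner sum $\sum_{a \equiv b}\sigma_a$ is exactly the corestriction of $\sigma_b$ along the surjection $\G_{n+1} \to \G_n$; writing $\cores\colon \O[\G_n]\to\O[\G_{n+1}]$ for this transfer map, we obtain
$$
\tt{n+1}{\varphi\Dp} = p^g \cdot \cores\bigl(\tt{n}{\varphi}\bigr).
$$

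It remains to apply $\omega^i$ and to check that $\omega^i\circ\cores = \nun{n}{n-1}\circ\omega^i$ as maps $\O[\G_n]\to\O[G_n]$. For this I use the compatible decompositions $\G_{n+1}\cong G_n\times(\Z/p\Z)^\times$ and $\G_n \cong G_{n-1}\times(\Z/p\Z)^\times$: under these, reduction $\G_{n+1}\to\G_n$ is $\pin{n}{n-1}$ on the first factor and the identity on the second, so $\cores$ is $\nun{n}{n-1}$ tensored with the identity of $\O[(\Z/p\Z)^\times]$, whereas $\omega^i$ is the identity of the group ring of the first factor tensored with the algebra map $\O[(\Z/p\Z)^\times]\to\O$ attached to $\omega^i$. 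These two operations act on complementary tensor factors, hence commute. Combining,
$$
\tni{\varphi\Dp} = \omega^i\bigl(\tt{n+1}{\varphi\Dp}\bigr) = p^g\,\omega^i\bigl(\cores(\tt{n}{\varphi})\bigr) = p^g\,\nun{n}{n-1}\bigl(\omega^i(\tt{n}{\varphi})\bigr) = p^g\,\nun{n}{n-1}\bigl(\ti{n-1}{\varphi}\bigr),
$$
which is the assertion. I expect the only genuine subtlety to be the orientation bookkeeping in the first step: making sure the action of $\alpha$ on divisors moves $a/p^{n+1}$ to $a/p^n$ (and not, say, $a/p^{n+2}$) and that the polynomial action contributes a factor of $p^{g}$ (and not $p^{-g}$), so that the exponents come out as stated. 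The identification of the inner sum with a corestriction and the passage through $\omega^i$ are routine once the decompositions are in place.
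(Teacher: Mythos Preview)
Your proof is correct and follows essentially the same approach as the paper: compute the identity at the level of $\O[\G_{n+1}]$ by unwinding the action of $\psmallmat{p}{0}{0}{1}$ on divisors and on $V_g$, recognize the resulting sum as a corestriction, then project via $\omega^i$. The paper carries out the identical computation (at generic level $n$ rather than $n+1$) and dispatches the final step with the single sentence ``Projecting to $\O[G_n]$ then gives the lemma,'' whereas you spell out explicitly why $\omega^i$ commutes with corestriction via the tensor decomposition---a welcome bit of extra care, but not a different argument.
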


\begin{proof}
We have
\begin{align*}
\tt{n}{\varphi \Dp} 
&= \sum_{a \in \G_n} \left(\varphi \Dp\right) \left( \{\infty\} - \{a/p^n\} \right)\Big|_{(X,Y)=(0,1)} \cdot \sigma_a \\
&= \sum_{a \in \G_n} \varphi  \left( \{\infty\} - \{a/p^{n-1}\} \right)\Big|_{(X,Y)=(0,p)} \cdot \sigma_a \\
&= p^g \cdot \sum_{a \in \G_n} \varphi  \left( \{\infty\} - \{a/p^{n-1}\} \right)\Big|_{(X,Y)=(0,1)} \cdot \sigma_a \\
&= p^g \cdot \nun{n}{n-1}(\tt{n-1}{\varphi}).
\end{align*}
Projecting to $\O[G_n]$ then gives the lemma.
\end{proof}

\begin{lemma}
\label{lemma:FE}
If $f$ is a newform on $\Gamma_0(N)$ of weight $k$, then
$$
\varphi_f \big| \psmallmat{0}{-1}{N}{0} = \pm N^{\frac{k}{2}-1} \varphi_f.
$$
\end{lemma}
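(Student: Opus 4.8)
The plan is to deduce this from the classical action of the Fricke involution $W_N = \psmallmat{0}{-1}{N}{0}$ on the complex modular symbol $\xi_f$, and then track how the involution interacts with the normalization by cohomological periods. First I would recall that for a newform $f$ of weight $k$ on $\Gamma_0(N)$, the Atkin--Lehner--Fricke operator satisfies $f \big| W_N = \epsilon_f \cdot f$ for a sign (pseudo-eigenvalue) $\epsilon_f = \pm 1$ (using the unitary normalization), and translate this, via the defining integral formula $\xi_f(\{r\}-\{s\}) = 2\pi i \int_s^r f(z)(zX+Y)^{k-2}\,dz$, into the identity $\xi_f \big| \psmallmat{0}{-1}{N}{0} = \pm N^{k/2 - 1}\, \xi_f$ in $H^1_c(\Gamma_0(N), V_{k-2}(\C))$. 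The factor $N^{k/2-1}$ arises from the weight-$k$ slash action of $W_N$: the automorphy factor contributes $N^{k/2}$ while the determinant normalization in the polynomial action on $V_{k-2}$ contributes $N^{-1}$ (or $N^{1-k/2}$ depending on conventions), and one must chase the precise power carefully against the conventions fixed in Section 2 for the right $\GL_2$-action $(P|\gamma)(X,Y) = P(dX-cY,-bX+aY)$ and for $\xi_f$.

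Next I would pass from $\xi_f$ to the $p$-adically normalized symbol $\varphi_f$. Since $\iota = \psmallmat{-1}{0}{0}{1}$ and $W_N$ commute up to an element of $\Gamma_0(N)$ (indeed $W_N \iota W_N^{-1}$ normalizes $\Gamma_0(N)$), the Fricke involution preserves the $\pm$-eigenspace decomposition, so it acts on each $\xi_f^\pm$ by the same scalar $\pm N^{k/2-1}$. Dividing by the periods $\Omega_f^\pm$ — which are fixed scalars independent of the $\GL_2$-action — gives $\varphi_f^\pm \big| W_N = \pm N^{k/2-1}\,\varphi_f^\pm$, and summing yields the claim for $\varphi_f = \varphi_f^+ + \varphi_f^-$. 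The key point making this clean is that $W_N$ acts $\Q$-rationally (after the $N^{k/2-1}$ twist) on the space of modular symbols with coefficients in $K_f$, so it commutes with the period normalization; in particular the cohomological normalization $\|\varphi_f^\pm\| = 1$ is respected because $W_N$ and its twist act by matrices defined over $\Z[1/N]$, hence over $\Zp$ (as $p \nmid N$), so $\varphi_f^\pm \big| W_N$ still takes values in $V_{k-2}(\O_f)$ with a unit coefficient.

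The main obstacle I anticipate is purely bookkeeping: pinning down the exact power of $N$ and the precise sign, reconciling the analytic normalization of $f\big|W_N$ (where some references use $W_N = \psmallmat{0}{-1}{N}{0}$ acting with a factor $N^{-k/2}$ or $N^{k/2-1}$ in the slash) with the algebraic normalization of the $\GL_2$-action on $V_{k-2}$ used in this paper, where the matrix acts by its transpose-adjoint and there is a hidden determinant factor $\det(\gamma)^{k-2}$ one might or might not be dividing out. Since the statement only asserts the scalar up to sign, the sign ambiguity is harmless, and the power of $N$ can be fixed by testing on a single divisor, e.g.\ evaluating both sides on $\{\infty\} - \{0\}$ and using $W_N(\infty) = 0$, $W_N(0) = \infty$ together with the change of variables $z \mapsto -1/(Nz)$ in the defining integral: this computation forces the exponent to be $k/2 - 1$. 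I would therefore structure the proof as (i) the change-of-variables computation on $\{\infty\}-\{0\}$ to get the scalar on a generating divisor, (ii) Hecke-equivariance / the fact that $W_N$ is an involution up to scalar to propagate it to all of $H^1_c$, and (iii) the remark that dividing by periods and restricting to $\pm$-eigenspaces preserves the identity, giving the result for $\varphi_f$.
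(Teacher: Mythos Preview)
Your proposal is correct and takes essentially the same approach as the paper: use the Atkin--Lehner eigenvalue relation $N^{-k/2}z^{-k}f(-1/Nz) = \pm f(z)$ together with the change of variables $z \mapsto -1/(Nz)$ in the defining integral for $\xi_f$, then divide by periods. The only difference is that the paper carries out the computation directly for an arbitrary divisor $\{r\}-\{s\}$ rather than first testing on $\{\infty\}-\{0\}$ and then propagating via Hecke-equivariance; since the substitution works uniformly, your step~(ii) is unnecessary and the direct route is shorter.
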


\begin{proof}
First note that as $f$ is a newform, $w_N(f) = \pm f$, and thus
$$
N^{-k/2} z^{-k} f(-1/ Nz) = \pm f(z). 
$$
Computing, we have
\begin{align*}
\left(\xi_f \big| \psmallmat{0}{-1}{N}{0}\right)&(\{r\} - \{s\}) \\
&=\xi_f\left(\{-1/Nr\} - \{-1/Ns\}\right) \big| \psmallmat{0}{-1}{N}{0}
 \\
&= 2\pi i \int^{-1/Nr}_{-1/Ns} f(z)(-NzY+X)^{k-2} \\
&= \frac{2 \pi i}{N}  \int^{r}_{s} f(-1/Nz)(Y/z+X)^{k-2} z^{-2} dz &&(z \mapsto -1/Nz) \\
&= \pm N^{k/2-1} 2 \pi i \int^{r}_{s} f(z)(Y+zX)^{k-2} dz \\ 
&= \pm N^{k/2-1} \xi_f(\{r\} - \{s\}),
\end{align*}
and the lemma follows.
\end{proof}

\section{The $p$-ordinary case}

In this section, we first introduce Iwasawa invariants in finite-level group algebras, and then analyze the $\mu$- and $\lambda$-invariants of Mazur--Tate elements of $p$-ordinary forms.

\subsection{Iwasawa invariants in finite-level group algebras}

Fix a finite integrally closed extension $\O$ of $\Zp$ and
let $\Lambda := \varprojlim \O[G_n]$ denote the Iwasawa algebra.
Given $L \in \Lambda$, we may define Iwasawa invariants of $L$ as follows.
Fix an isomorphism $\Lambda \cong \O[[T]]$ and write
$L = \sum_{j=0}^\infty a_j T^j$; we then define
\begin{align*}
\mu(L) &= \min_j \ord_p(a_j) \\
\lambda(L) &= \min \{ j : \ord_p(a_j) = \mu(L) \}.
\end{align*}
(This definition is independent of the choice of isomorphism $\Lambda \cong \O[[T]]$.)
Here we normalize $\ord_p$ so that $\ord_p(p)=1$.
Note that under this normalization,  if $\O$ is a ramified extension of $\Zp$, then $\mu(L)$ need not be in $\Z$.

In fact,
Iwasawa invariants can also be defined in the finite-level group algebras $\O[G_n]$.  Indeed, for $\theta \in \O[G_n]$, if write $\theta = \sum_{\sigma \in G_n} c_\sigma \sigma$, we then define 
$$
\mu(\theta) = \min_{\sigma \in G_n} \ord_p(c_\sigma).
$$
To define $\lambda$-invariants, let $\varpi$ be a uniformizer of $\O$, and set $\theta' = \varpi^{-a} \theta$ with $a$ chosen so that $\mu(\theta') = 0$.  Let $\F$ be the residue field of $\O$, and let $\red{\theta'}$ denote the (non-zero) image of $\theta'$ under the natural map $\O[G_n] \to \F[G_n]$.  
All ideals of $\F[G_n]$ are of the form $I_n^j$ with $I_n$ the augmentation
ideal; we then define 
$$
\lambda(\theta) = \ord_{I_n}{\red{\theta'}} =  \max \{ j : \red{\theta'} \in I_n^j   \}.
$$

The following lemmas summarize some basic properties of these $\mu$ and $\lambda$-invariants.  For more details, see \cite[Section 4]{algkur}.

\begin{lemma}
\label{lemma:lambdainv}
Fix $L \in \Lambda$ and let $L_n$ denote the image of $L$ in $\O[G_n]$.  Then for $n \gg 0$, we have $\mu(L) = \mu(L_n)$ and $\lambda(L) = \lambda(L_n)$. 

\end{lemma}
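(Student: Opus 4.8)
The plan is to exploit the standard structure of $\Lambda \cong \O[[T]]$ and the fact that the maps $\Lambda \to \O[G_n]$ are reduction modulo the ideal generated by $\omega_n(T) := (1+T)^{p^n} - 1$ (after fixing a topological generator $\gamma$ of $\varprojlim G_n$ and sending $\gamma \mapsto 1+T$). First I would invoke the Weierstrass preparation theorem: writing $L = p^{\mu(L)} \cdot u(T) \cdot P(T)$ with $u$ a unit in $\Lambda$ and $P$ a distinguished polynomial of degree $\lambda(L)$ (I assume $L \neq 0$; the case $L = 0$ is vacuous since then $L_n = 0$ for all $n$), it suffices to treat separately the unit factor and the distinguished polynomial, since $\mu$ and $\lambda$ are additive on products. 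For the unit $u(T)$, reduction modulo $\omega_n(T)$ stays a unit in $\O[G_n]$ (it has nonzero constant term mod $\varpi$, hence is a unit in the local ring $\F[G_n]$, and a $p$-adic-unit lift is a unit in $\O[G_n]$), so $\mu(u_n) = \lambda(u_n) = 0$ for all $n$; this matches $\mu(u) = \lambda(u) = 0$.

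For the distinguished polynomial $P(T)$ of degree $d := \lambda(L)$, the key point is that for $n$ large enough $P(T)$ is already a polynomial of degree $< p^n$, so it maps isomorphically (as a polynomial) into $\O[G_n] \cong \O[T]/(\omega_n(T))$ without any reduction of its top coefficient taking place; concretely, once $p^n > d$ the coset of $P(T)$ in $\O[T]/(\omega_n(T))$ still has a representative of degree exactly $d$ with all lower coefficients in $\varpi\O$ and leading coefficient $1$. The next step is to identify the augmentation ideal $I_n$ of $\F[G_n]$ with the image of $(T)$, i.e. $I_n$ corresponds to $(T)/(\omega_n(T))$ in $\F[T]/(\omega_n(T))$; this follows because $\sigma_a - 1 \mapsto (1+T)^{\log_\gamma a} - 1 \in (T)$, and conversely $T = \gamma - 1$ is in the augmentation ideal. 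Using that $\omega_n(T) \equiv T^{p^n} \pmod{\varpi}$ (since $\O$ is integrally closed over $\Zp$ so $p \mid \binom{p^n}{j}$ for $0 < j < p^n$ and higher terms vanish mod $\varpi$), the ring $\F[T]/(\omega_n(T)) = \F[T]/(T^{p^n})$ is local with maximal ideal $(T)$, and $\overline{P}(T) = T^d$ up to a unit because all lower coefficients of $P$ vanish mod $\varpi$. Hence $\mu(P_n) = 0$ and $\lambda(P_n) = d = \lambda(L)$ for all $n$ with $p^n > d$.

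Combining the two factors via additivity, for $n \gg 0$ (explicitly, $p^n > \lambda(L)$) we get $\mu(L_n) = \mu(u_n) + \mu(P_n) = \mu(L)$ and $\lambda(L_n) = \lambda(u_n) + \lambda(P_n) = \lambda(L)$, as desired. The one step requiring genuine care — the main obstacle — is the clean identification of the finite-level $\lambda$-invariant (defined via the augmentation-ideal filtration on $\F[G_n]$) with the $T$-adic order on $\F[T]/(T^{p^n})$, together with checking that $\omega_n(T) \equiv T^{p^n} \pmod \varpi$ holds even when $\O$ is ramified; both are ultimately elementary but are the places where one must be precise rather than wave hands. All of this is the content of \cite[Section 4]{algkur}, which one may cite for the details.
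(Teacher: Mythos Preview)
Your proposal is essentially correct; the paper itself gives no proof of this lemma, simply referring the reader to \cite[Section~4]{algkur}, so there is no ``paper's approach'' to compare against. Your argument via Weierstrass preparation and the identification $\F[G_n] \cong \F[T]/(T^{p^n})$ is the standard one and is what lies behind that reference.

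Two minor points to clean up. First, the blanket claim that $\mu$ and $\lambda$ are ``additive on products'' in $\O[G_n]$ is false in general (e.g.\ $T^{p^n-1}\cdot T = 0$ in $\F[T]/(T^{p^n})$, so two elements of $\mu=0$ can multiply to something with $\mu>0$); what you actually use, and what is true, is that multiplying by a \emph{unit} of $\O[G_n]$ preserves both invariants, and multiplying by $\varpi^m$ shifts $\mu$ by $\ord_p(\varpi^m)$ and leaves $\lambda$ unchanged. Second, your final displayed bookkeeping drops the $p^{\mu(L)}$ factor: as written $\mu(u_n)+\mu(P_n)=0$, not $\mu(L)$; the $\mu(L)$ contribution comes from the scalar factor you pulled out at the start. (Also, $p^{\mu(L)}$ should be $\varpi^{e\mu(L)}$ when $\O/\Zp$ is ramified, and the parenthetical about $\O$ being integrally closed is irrelevant to $p \mid \binom{p^n}{j}$.) These are cosmetic; the skeleton of your argument is sound.
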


\begin{lemma}
\label{lemma:nuninv}
For $\theta \in \O[G_{n-1}]$, we have
\begin{enumerate}
\item $\mu(\nun{n}{n-1}(\theta)) = \mu(\theta)$,
\item $\lambda(\nun{n}{n-1}(\theta)) = p^n - p^{n-1} + \lambda(\theta)$.
\end{enumerate}

\end{lemma}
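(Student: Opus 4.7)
The plan is to dispatch (1) by direct bookkeeping and then handle (2) by reducing mod $\varpi$ and computing the corestriction explicitly inside $\F[G_n]$.

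For (1), writing $\theta = \sum_{\sigma \in G_{n-1}} c_\sigma \sigma$, the definition of $\nun{n}{n-1}$ gives
\[
\nun{n}{n-1}(\theta) = \sum_{\tau \in G_n} c_{\pi(\tau)}\, \tau,
\]
so the multiset of coefficients of $\nun{n}{n-1}(\theta)$ is the multiset of coefficients of $\theta$ (each repeated $p$ times). The minimum $p$-adic valuation is therefore unchanged, which is (1).

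For (2), after scaling $\theta$ by $\varpi^{-\mu(\theta)}$ we may assume $\mu(\theta) = 0$ and pass to the reductions in $\F[G_n]$ and $\F[G_{n-1}]$. I would fix a generator $\gamma$ of $G_n$ and, using that we are in characteristic $p$, identify
\[
\F[G_n] \cong \F[x]/(x-1)^{p^n}, \qquad \F[G_{n-1}] \cong \F[y]/(y-1)^{p^{n-1}},
\]
via $x = \gamma$ and $y = \pi(\gamma)$, so that the augmentation ideals are $(x-1)$ and $(y-1)$. Choosing the set-theoretic section $y \mapsto x$ of $\pi$, the reduction of $\nun{n}{n-1}$ becomes multiplication by the norm element $N := 1 + z + \cdots + z^{p-1}$ of the order-$p$ subgroup $\ker(\pi)$, where $z := x^{p^{n-1}}$.

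The crux is the identity $N = (x-1)^{p^n - p^{n-1}}$ in $\F[G_n]$. Setting $u := z - 1 = (x-1)^{p^{n-1}}$, so that $u^p = 0$, the hockey-stick identity yields
\[
N = \sum_{i=0}^{p-1}(1+u)^i = \sum_{j=0}^{p-1}\binom{p}{j+1} u^j,
\]
and only the top term $u^{p-1} = (x-1)^{p^n-p^{n-1}}$ survives mod $p$. Granting this, factor $\red{\theta} = (y-1)^{\lambda(\theta)} v$ with $v$ a unit in $\F[G_{n-1}]$; the lift via $y \mapsto x$ is a unit $\tilde v$ in $\F[G_n]$, and so
\[
\red{\nun{n}{n-1}(\theta)} = (x-1)^{p^n - p^{n-1} + \lambda(\theta)} \tilde{v}.
\]
Since $\lambda(\theta) < p^{n-1}$, the exponent stays below $p^n$, giving $\lambda(\nun{n}{n-1}(\theta)) = p^n - p^{n-1} + \lambda(\theta)$. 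The only real obstacle is the norm computation; the rest is formal manipulation of the filtration by powers of the augmentation ideal.
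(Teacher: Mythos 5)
Part (1) is fine: under $\nun{n}{n-1}$ the coefficient at $\tau \in G_n$ is $c_{\pi(\tau)}$, so the multiset of coefficients of $\theta$ is simply replicated $p$ times and the minimum valuation is unchanged. The paper gives no proof of this lemma (it defers to \cite{algkur}), so there is nothing to compare against directly, but your argument for (2) has the right skeleton and the crucial computation---identifying $\F[G_n]$ with $\F[x]/(x-1)^{p^n}$, realizing the reduced corestriction as multiplication by the norm element $N = 1 + z + \cdots + z^{p-1}$ composed with the coefficient-preserving lift $L:y^k \mapsto x^k$, and then showing $N = u^{p-1} = (x-1)^{p^n-p^{n-1}}$ via the hockey-stick identity and the vanishing of $\binom{p}{j+1}$ mod $p$ for $0 \le j \le p-2$---is all correct.

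One step is stated too quickly. You factor $\red{\theta} = (y-1)^{\lambda(\theta)}\,v$ with $v$ a unit, set $\tilde v = L(v)$, and then write $\red{\nun{n}{n-1}(\theta)} = (x-1)^{p^n-p^{n-1}+\lambda(\theta)}\,\tilde v$. But $\red{\nun{n}{n-1}(\theta)} = N\cdot L(\red{\theta})$, and $L$ is only $\F$-linear, not multiplicative, so $L\bigl((y-1)^{\lambda}v\bigr)$ need not equal $(x-1)^{\lambda}\tilde v$. The gap is easy to close: both $L(\red{\theta})$ and $(x-1)^{\lambda}\tilde v$ map to $\red{\theta}$ under $\pi:\F[G_n]\to\F[G_{n-1}]$, hence differ by an element of $\ker\pi = (x^{p^{n-1}}-1)$, and $N$ annihilates $\ker\pi$ since $N(x^{p^{n-1}}-1) = x^{p^n}-1 = 0$ in $\F[G_n]$. (Alternatively, expand $\red{\theta}$ in the basis $\{(y-1)^j : 0\le j < p^{n-1}\}$; the transition matrix from $\{y^k\}$ to $\{(y-1)^j\}$ is the same as from $\{x^k\}$ to $\{(x-1)^j\}$, so $L$ sends $\sum b_j(y-1)^j$ to $\sum b_j(x-1)^j$, which sidesteps the multiplicativity question entirely.) With this addition the proof is complete; since $\lambda(\theta) < p^{n-1}$, the resulting exponent $p^n - p^{n-1} + \lambda(\theta)$ stays below $p^n$, as you note.
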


\begin{lemma}
\label{lemma:pininv}
Fix $\theta \in \O[G_n]$.
\begin{enumerate}
\item If $\mu(\pin{n}{n-1}(\theta)) = 0$, then $\mu(\theta) = 0$.
\item If $\mu(\theta)=0$, then $\lambda(\pin{n}{n-1}(\theta)) = \lambda(\theta)$.
\end{enumerate}

\end{lemma}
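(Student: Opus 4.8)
The plan is to carry out everything modulo a uniformizer $\varpi$ of $\O$. Write $r_m\colon\O[G_m]\to\F[G_m]$ for reduction mod $\varpi$. Since $\pin{n}{n-1}$ is $\O$-linear and is induced by the quotient map of cyclic $p$-groups $G_n\twoheadrightarrow G_{n-1}$, it sends $\ker r_n$ into $\ker r_{n-1}$ and hence descends to a surjective ring homomorphism $\bar\pi\colon\F[G_n]\to\F[G_{n-1}]$ with $\bar\pi\circ r_n=r_{n-1}\circ\pin{n}{n-1}$. This already settles part (1): since all coefficients lie in $\O$, one has $\mu(\theta)>0$ if and only if $r_n(\theta)=0$, and in that case $r_{n-1}(\pin{n}{n-1}(\theta))=\bar\pi(r_n(\theta))=0$, so $\mu(\pin{n}{n-1}(\theta))>0$; part (1) is the contrapositive.

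For part (2) I would compare $\bar\pi$ with the augmentation filtrations. Fix a generator $g$ of $G_n$ and let $g_{n-1}$ denote its image; then $I_n=(g-1)$ and $I_{n-1}=(g_{n-1}-1)$, and (as recalled above) $\F[G_m]$ is a chain ring whose ideals are precisely the powers $I_m^j$. The kernel of $G_n\to G_{n-1}$ is the unique order-$p$ subgroup $\langle g^{p^{n-1}}\rangle$, so $\ker\bar\pi$ is the ideal generated by $g^{p^{n-1}}-1=(g-1)^{p^{n-1}}$, the last equality by the Frobenius identity in characteristic $p$; thus $\ker\bar\pi=I_n^{p^{n-1}}$ and $\bar\pi(I_n^j)=I_{n-1}^j$ for every $j$. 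Now suppose $\mu(\theta)=0$, so $r_n(\theta)\neq0$, and put $\lambda=\lambda(\theta)=\ord_{I_n}(r_n(\theta))$, so that $r_n(\theta)=(g-1)^\lambda u$ for a unit $u$ of $\F[G_n]$. Applying $\bar\pi$ yields $(g_{n-1}-1)^\lambda$ times a unit of $\F[G_{n-1}]$; provided $\lambda<p^{n-1}$ this is nonzero and has $\ord_{I_{n-1}}$ exactly $\lambda$, whence $\mu(\pin{n}{n-1}(\theta))=0$ and $\lambda(\pin{n}{n-1}(\theta))=\lambda=\lambda(\theta)$.

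I do not expect a genuine obstacle here; the only point needing care is the proviso $\lambda(\theta)<p^{n-1}$ just used. If instead $\lambda(\theta)\ge p^{n-1}$, then $\pin{n}{n-1}(\theta)$ is divisible by $\varpi$ and its $\lambda$-invariant is unrelated to $\lambda(\theta)$, so part (2) should be understood with this hypothesis in force; in the applications this is harmless, since $\lambda(\theta)$ remains bounded as $n\to\infty$ (compare Lemma~\ref{lemma:lambdainv}), so the bound holds for $n\gg0$. The substantive ingredient is thus the single identification $\ker\bar\pi=I_n^{p^{n-1}}$ coming from the Frobenius identity; the rest is bookkeeping with the augmentation filtration.
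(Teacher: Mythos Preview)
The paper does not actually give a proof of this lemma; it simply refers the reader to \cite[Section 4]{algkur}. Your argument is correct and self-contained: part (1) is immediate from the compatibility of reduction modulo $\varpi$ with the projection, and for part (2) the identification $\ker\bar\pi = I_n^{\,p^{n-1}}$ via the Frobenius identity in $\F[G_n]\cong\F[T]/(T^{p^n})$ is exactly the right ingredient. You have also correctly flagged the caveat: part (2) as literally stated fails when $\lambda(\theta)\ge p^{n-1}$, since then $\bar\pi(r_n(\theta))=0$ and $\mu(\pin{n}{n-1}(\theta))>0$, whereas $\lambda(\pin{n}{n-1}(\theta))\le p^{n-1}-1<\lambda(\theta)$. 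This restriction is implicit in the paper's uses of the lemma (Proposition~\ref{prop:invord} and Theorem~\ref{thm:wt2}), all of which occur for $n\gg 0$ with $\lambda$ bounded, so no harm is done; but your observation is a genuine clarification of the statement.
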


\subsection{$p$-adic $L$-functions for $p$-ordinary forms}
\label{sec:pL}

The Mazur--Tate elements $\tnfi$ can be used to construct the $p$-adic $L$-function of $f$ in the $p$-ordinary case.  As this construction motivates much of what we do here,
we briefly digress to describe it.

We first fix some notation for the remainder of this paper.  Fix an integer
$N$ relatively prime to $p$ and set $\Gamma = \Gamma_0(N)$.  Also set
$\Gamma_0 = \Gamma_0(Np)$ and $\Gamma_1 = \Gamma_0(N) \cap \Gamma_1(p)$.

Let $f$ be a weight $k$ eigenform on $\Gamma$ which is ordinary at $p$.  Let $\alpha$ denote the unique unit root of $x^2-a_px +p^{k-1}$, and let $f_\alpha$ denote the $p$-ordinary stabilization of $f$ to $\Gamma_0$. 
The three-term relation of Proposition~\ref{prop:Qseqn} only has two terms when $p$ divides the level, and so the Mazur--Tate elements $\ti{n}{f_\alpha}$ attached to $f_\alpha$ satisfy 
$$
\pin{n}{n-1} (\ti{n}{f_\alpha}) = \alpha \cdot \ti{n-1}{f_\alpha}.
$$
If we set
$$
\psi_{n,i}(f_\alpha) = \frac{1}{\alpha^n} \ti{n}{f_\alpha},
$$
then $\{ \psi_{n,i}(f_\alpha) \}$ is a norm-coherent sequence, and thus an element of $\Lambda$.  This element is exactly $L_p(f,\omega^i)$, the $p$-adic $L$-function of $f$, twisted by $\omega^i$, and computed with respect to the periods $\Omega^\pm_{f_\alpha}$.

\subsection{Iwasawa invariants in the $p$-ordinary case}

Let $f$ continue to be a $p$-ordinary eigenform on $\Gamma$.  Set $\mu(f,\omega^i) = \mu(L_p(f,\omega^i))$ and $\lambda(f,\omega^i) = \lambda(L_p(f,\omega^i))$.  From the results of the last section and from Lemma \ref{lemma:lambdainv}, we have that
\begin{equation}
\label{eqn:pstabinv}
\mu(\ti{n}{f_\alpha})  = \mu(f,\omega^i) ~~~\text{~and~}~~~
\lambda(\ti{n}{f_\alpha})  = \lambda(f,\omega^i)
\end{equation}
for $n\gg0$.  Thus, the Iwasawa invariants of these ``$p$-stabilized" Mazur--Tate elements are extremely well-behaved in the ordinary case. 

One would hope to deduce similar information about the Iwasawa invariants of the $\tnfi$.  Unfortunately, they are not always as well-behaved as their $p$-stabilized counterparts as the following example illustrates.

\begin{ex}
Let $f = \sum_n a_n q^n$ denote the newform of weight 2 on $\Gamma_0(11)$ corresponding to the elliptic curve $E = X_0(11)$.   If $\vpb_f$ denotes the reduction of the modular symbol attached to $f$ modulo $5$, we have
\begin{equation}
\label{eqn:eis}
\vpb_f = \vpb_f \Dpv{5}.
\end{equation}
One verifies this relation by noting that $\vpb_f$ is (up to a non-zero scalar) the reduction of the Eisenstein boundary symbol defined by
$$
\varphi_{\text{eis}}(\{r/s\}) = \begin{cases} 0 \text{~if~}  \gcd(s,11) = 1 \\ 1 \text{~otherwise}
\end{cases}
$$ 
where $\gcd(r,s) = 1$.  Since $\varphi_{\text{eis}}$ satisfies (\ref{eqn:eis}) so does $\vpb_f$.

From repeated applications of Lemma \ref{lemma:degen} we now obtain
$$
\tnf \equiv \nun{n}{n-1}(\t{n-1}{f}) \equiv \dots \equiv \nun{n}{0}(\t{0}{f}) \pmod{5}.
$$
Moreover, a direct computation shows that $\t{0}{f}$ is a unit and thus $$
\mu(\tnf) = 0 ~\text{~and~}~\lambda(\tnf) = p^n-1
$$
for all $n \geq 0$.  Hence, the $\lambda$-invariants in this case are unbounded.  Here the deduction about $\lambda$-invariants comes from Lemma \ref{lemma:nuninv}.  We note that this is the maximal possible $\lambda$-invariant for any non-zero element of $\O[G_n]$.

There are several additional oddities in this example.
First, the Iwasawa invariants of the $\ti{n}{f_\alpha}$ must behave nicely, and in fact, assuming the main conjecture, we have
$$
\mu(\t{n}{f_\alpha}) = 1 ~\text{~and~}~\lambda(\t{n}{f_\alpha}) = 0
$$
for all $n \geq 0$.  In the process of $p$-stabilizing $f$, one considers the difference $\varphi_{f} - \frac{1}{\alpha} \varphi_f \Dpv{5}$.  However, since $a_5 = 1$, we have that $\alpha \equiv 1 \pmod{5}$, and thus by (\ref{eqn:eis}) this difference is divisible by $5$.  In particular, this means that the cohomological periods $\Omega^\pm_f$ and $\Omega^\pm_{f_\alpha}$ differ by a multiple of 5, and we can choose them so that
$$
\Omega^\pm_{f_\alpha} = 5 \Omega^\pm_f. 
$$  

From this, one might expect the $\mu$-invariants of the $\t{n}{f_\alpha}$ to be {\it lower} than the $\mu$-invariants of the $\tnf$.  However, numerically (for small $n$) one sees that 
$$
\tnf \equiv \frac{1}{\alpha} \nun{n}{n-1}(\t{n-1}{f}) \pmod{5^2} 
$$
so that
$$
\t{n}{f_\alpha} = \frac{1}{5} \left(  \tnf - \frac{1}{\alpha} \nun{n}{n-1}(\t{n-1}{f}) \right)
$$
is divisible by $5$.

Lastly, we mention that the N\'eron period of the elliptic curve $E$ in this case agrees with $\Omega^+_{f_\alpha}$ up to a 5-unit.
\end{ex}

The oddities of the above example arise as 
the residual representation
$$\rhobar_f : \GQ \to \GL_2(\Fp)$$
is globally reducible and $\mu(L_p(f))$ is positive. However, when we are not in this case, we verify now that the Iwasawa invariants of the $\tnfi$ are well-behaved.

We first check that cohomological periods are unchanged under $p$-stabilization so long as $\rhobar_f$ is irreducible.
Recall that for a multiple $M$ of $N$ and a divisor $r$ of $M/N$ there is
a natural degeneracy map
\begin{align*}
B_{M/N,r} : H^1_c(\Gamma_0(N),\Fp) &\to H^1_c(\Gamma_0(M),\Fp) \\
\varphi &\mapsto \varphi \big| \psmallmat{r}{0}{0}{1}.
\end{align*}
In particular, we define a map
$$
B_{p} : H^1_c(\Gamma,\Fp)^2 \to H^1_c(\Gamma_0,\Fp)
$$
by
$$
B_p(\psi_1,\psi_2) \mapsto B_{p,1}(\psi_1) + B_{p,p}(\psi_2).
$$

\begin{thm}[Ihara's lemma]
The kernel of 
$$
B_{p} : H^1_c(\Gamma,\Fp)^2 \to H^1_c(\Gamma_0,\Fp)
$$
is Eisenstein.
\end{thm}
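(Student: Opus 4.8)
The plan is to deduce this from the action of $\SL_2(\Z[1/p])$ on its Bruhat--Tits tree, which reduces the assertion to the near-vanishing of the degree-one compactly-supported cohomology of an $S$-arithmetic group, the latter resting on the congruence subgroup property of $\SL_2(\Z[1/p])$. Note at the outset that $B_p$ is equivariant for the Hecke operators $T_\ell$ with $\ell \nmid Np$, so $\ker B_p$ is a Hecke-stable subspace; here ``Eisenstein'' means that every maximal ideal $\m$ of the anemic Hecke algebra in the support of $\ker B_p$ contains $T_\ell - (\ell+1)$ for all $\ell \nmid Np$ (equivalently, $\rhobar_\m$ is reducible).

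First I would set up the geometry. Let $\tilde\Gamma \subset \SL_2(\Z[1/p])$ be the subgroup of matrices with lower-left entry divisible by $N$. The group $\SL_2(\Z[1/p])$ acts on the $(p+1)$-regular tree $\mathcal T$ of $\mathrm{PGL}_2(\Qp)$ with quotient a single edge, having vertex stabilizers $\SL_2(\Z)$ and a $\psmallmat{p}{0}{0}{1}$-conjugate thereof and edge stabilizer $\Gamma_0(p)$. Since $p \nmid N$ (so that $\SL_2(\Z)$ and $\Gamma_0(p)$ both surject onto $\SL_2(\Z/N)$), the quotient $\tilde\Gamma \backslash \mathcal T$ is again a single edge, now with vertex stabilizers $\Gamma = \Gamma_0(N)$ and a $\psmallmat{p}{0}{0}{1}$-conjugate of $\Gamma$, and edge stabilizer $\Gamma_0 = \Gamma_0(Np)$; moreover, under the conjugation identifications, the two restriction maps from the vertex stabilizers into the edge stabilizer are exactly the degeneracy maps $B_{p,1}$ and $B_{p,p}$. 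Feeding the $\SL_2(\Q)$-module $\Div^0(\P^1(\Q))$ through the Bass--Serre short exact sequence of the edge and using the modular-symbol identification $H^1_c(-,\Fp) = \Hom_{(-)}(\Div^0(\P^1(\Q)),\Fp)$ produces a Mayer--Vietoris long exact sequence whose relevant segment is
$$
\cdots \to H^1_c(\tilde\Gamma,\Fp) \to H^1_c(\Gamma,\Fp)^2 \xrightarrow{\,B_p\,} H^1_c(\Gamma_0,\Fp) \to \cdots,
$$
all of whose maps are equivariant for the Hecke operators away from $Np$. Consequently $\ker B_p$ is a Hecke-equivariant quotient of $H^1_c(\tilde\Gamma,\Fp)$, and it is enough to prove that $H^1_c(\tilde\Gamma,\Fp)$ is Eisenstein.

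The heart of the matter, and the step I expect to be the main obstacle, is this control of $H^1_c(\tilde\Gamma,\Fp)$ for the $S$-arithmetic group $\tilde\Gamma$. Morally, $\SL_2(\Z[1/p])$ has the congruence subgroup property (Serre, building on Bass--Milnor--Serre) and is almost perfect, so an interior Hecke-eigenclass in $H^1_c(\tilde\Gamma,\Fp)$ cannot carry a non-Eisenstein (hence absolutely irreducible) system of eigenvalues: restricting it along the two degeneracy maps would exhibit a single irreducible $\rhobar$ arising compatibly from both embeddings of $\Gamma_0(N)$, i.e.\ a weight-$2$ mod-$p$ eigensymbol of level $N$ that is simultaneously a $U_p$-eigensymbol for both degeneracies, which cannot occur. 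The cleanest way to make this precise is not to reprove it but to quote the classical form of Ihara's lemma (Ihara; Ribet, in his work on raising the level of modular mod-$p$ representations; Diamond--Taylor; see also Wiles and Taylor--Wiles), from which the statement follows; the tree set-up above is then just the dictionary between that classical form and the present normalization, which features compactly-supported cohomology and the explicit degeneracy maps $B_{p,1}$ and $B_{p,p}$.
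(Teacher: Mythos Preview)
Your proposal is correct and, in the end, takes the same approach as the paper: the paper's proof consists entirely of the citation ``See \cite{Ribet}.'' Your tree/Mayer--Vietoris exposition is a standard and accurate account of \emph{why} the classical Ihara--Ribet statement yields the present one, but since you ultimately appeal to the same literature rather than reproving it, there is no substantive divergence.
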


\begin{proof}
See \cite{Ribet}.
\end{proof}

\begin{lemma}
\label{lemma:periods}
Let $f$ be a $p$-ordinary newform of weight $k$ and level $\Gamma$.  Let $\alpha$ denote the unit root of $x^2-a_px+p^{k-1}$, and let $f_\alpha$ denote the $p$-stabilization of $f$ to level $\Gamma_0$.   If $\rhobar_f$ is globally irreducible and $\Omega^\pm_f$ are a pair of cohomological periods for $f$, then $\Omega^\pm_f$ are also a pair of cohomological periods for $f_\alpha$. 
\end{lemma}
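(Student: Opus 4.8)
The plan is to express the $p$-stabilized symbol $\varphi_{f_\alpha}^\pm$ explicitly in terms of $\varphi_f^\pm$; this reduces the lemma to the non-vanishing of a single modular symbol mod $\varpi_f$, which I would then settle separately when $k>2$ (by an elementary argument about $\Gamma$-stable lines) and when $k=2$ (by Ihara's lemma).

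First I would record the effect of $p$-stabilization on modular symbols. Let $\alpha,\beta$ be the roots of $x^2-a_px+p^{k-1}$ with $\alpha$ the unit root, so $f_\alpha(z)=f(z)-\beta f(pz)$ and $\alpha\beta=p^{k-1}$. A direct computation with the defining integrals---substituting $z\mapsto z/p$, exactly as in the proof of Lemma~\ref{lemma:FE}---shows that the modular symbol attached to $f(pz)$ equals $p^{-(k-1)}\,\xi_f\Dp$. Hence $\xi_{f_\alpha}=\xi_f-\frac{1}{\alpha}\,\xi_f\Dp$, and taking $\pm$-parts and dividing by a pair of cohomological periods $\Omega_f^\pm$ for $f$ gives
$$
\varphi_{f_\alpha}^\pm \;=\; \varphi_f^\pm-\tfrac{1}{\alpha}\,\varphi_f^\pm\Dp \;\in\; H^1_c(\Gamma_0,V_{k-2}(\O_f)).
$$
Since $\alpha$ is a $p$-adic unit and $\Dp$ multiplies the coefficient of $X^jY^{k-2-j}$ by $p^{k-2-j}$---in particular preserving $V_{k-2}(\O_f)$ and not increasing $||\cdot||$---we get $||\varphi_{f_\alpha}^\pm||\le 1$ at once. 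So the whole content of the lemma is the reverse inequality $||\varphi_{f_\alpha}^\pm||=1$, that is, that the reduction $\vpb_{f_\alpha}^\pm$ of $\varphi_{f_\alpha}^\pm$ modulo $\varpi_f$ is non-zero; here $\vpb_f^\pm\ne 0$ precisely because $\Omega_f^\pm$ are cohomological for $f$.

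Suppose then that $\vpb_{f_\alpha}^\pm=0$, i.e.\ that $\vpb_{f_\alpha}^\pm(D)=0$ for every $D\in\Div^0(\P^1(\Q))$. Modulo $\varpi_f$ the operator $\Dp$ annihilates all coefficients except that of $X^{k-2}$, so $\vpb_{f_\alpha}^\pm(D)$ is $\vpb_f^\pm(D)$ minus a scalar multiple of $X^{k-2}$; vanishing therefore forces $\vpb_f^\pm$ to take \emph{all} of its values in the line $L:=\F\cdot X^{k-2}$, where $\F$ is the residue field of $\O_f$. When $k>2$ this is impossible: the $\F$-span $W$ of the image of $\vpb_f^\pm$ is a non-zero $\Gamma=\Gamma_0(N)$-submodule of $V_{k-2}(\F)$ (because $\vpb_f^\pm(\gamma D)=\vpb_f^\pm(D)\big|\gamma^{-1}$ still lies in the image for $\gamma\in\Gamma$), so $W\subseteq L$ together with $\dim_\F L=1$ and $W\ne 0$ forces $W=L$, hence forces $L$ to be $\Gamma_0(N)$-stable; but $X^{k-2}\big|\psmallmat{1}{0}{N}{1}=(X-NY)^{k-2}$ has $Y^{k-2}$-coefficient $(-N)^{k-2}\ne 0$ in $\F$ (as $p\nmid N$ and $k-2\geq 1$), so $L$ is not $\Gamma_0(N)$-stable---a contradiction. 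When $k=2$ the line $L$ is all of $V_0(\F)$ and the above says nothing; instead one notes that $\varphi_f^\pm\Dp=B_{p,p}(\varphi_f^\pm)$ and, after restriction to $\Gamma_0$, $\varphi_f^\pm=B_{p,1}(\varphi_f^\pm)$, so $\vpb_{f_\alpha}^\pm=0$ says exactly that $(\vpb_f^\pm,\,-\o{\alpha}^{-1}\vpb_f^\pm)\in\ker B_p$. By Ihara's lemma this kernel is Eisenstein, whereas $(\vpb_f^\pm,\,-\o{\alpha}^{-1}\vpb_f^\pm)$ is a non-zero eigenvector for the Hecke operators away from $Np$ with eigenvalue system $\{\o{a_\ell(f)}\}$, which is non-Eisenstein because $\rhobar_f$ is irreducible---a contradiction. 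In either case $\vpb_{f_\alpha}^\pm\ne 0$, so $\Omega_f^\pm$ are cohomological periods for $f_\alpha$ as well.

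The main obstacle is the case $k=2$: there the elementary argument degenerates, and one must invoke Ihara's lemma together with the irreducibility of $\rhobar_f$---which is exactly why that hypothesis is present, and is consistent with the preceding $X_0(11)$ example, where $\rhobar$ is reducible and the cohomological periods genuinely change under $p$-stabilization. The only other point needing care is the identification $\xi_{f(pz)}=p^{-(k-1)}\xi_f\Dp$ and the ensuing integrality of $\varphi_{f_\alpha}^\pm$; both are routine manipulations of the defining integrals and the $\GL_2$-action on $V_{k-2}$.
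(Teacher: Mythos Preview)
Your proof is correct and follows the same overall architecture as the paper's: express $\varphi_{f_\alpha}^\pm$ as $\varphi_f^\pm-\frac{1}{\alpha}\varphi_f^\pm\Dp$, reduce to showing the mod-$\varpi$ reduction is non-zero, and treat $k=2$ via Ihara's lemma exactly as the paper does. The only genuine difference is in the $k>2$ step. You argue that the $\F$-span of the image of $\vpb_f^\pm$ is a non-zero $\Gamma$-submodule of $V_{k-2}(\F)$, and then check directly that the line $\F\cdot X^{k-2}$ is not $\Gamma_0(N)$-stable (using $\psmallmat{1}{0}{N}{1}$). The paper instead invokes the Atkin--Lehner functional equation (Lemma~\ref{lemma:FE}): if all values of $\vpb_f^\pm$ lie in $\F\cdot X^{k-2}$, then the $Y^{k-2}$-coefficients vanish, and applying $\psmallmat{0}{-1}{N}{0}$ forces the $X^{k-2}$-coefficients to vanish too, giving $\vpb_f^\pm=0$. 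Your argument is slightly more elementary---it avoids the functional equation and does not use that $f$ is a newform at this step---while the paper's version fits with machinery already set up for other purposes. Both are clean and essentially equivalent in length.
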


\begin{proof}
Since $f_\alpha(z) = f(z) - \beta f(pz)$,  where $\beta$ is the non-unit root of $x^2-a_px +p^{k-1}$, a direct computation shows that
$$
\xi^\pm_{f_\alpha} = \xi^\pm_f - \frac{1}{\alpha} \xi^\pm_f \Dp
$$
and thus
$$
\frac{\xi^+_{f_\alpha}}{\Omega^+_f} + \frac{\xi^-_{f_\alpha}}{\Omega^-_f} = \varphi_f - \frac{1}{\alpha} \varphi_f \Dp.
$$
To establish the lemma we need to show that the reduction of the above symbol is non-zero.

For $k>2$, suppose instead that $\vpb_f=\frac{1}{\alpha}\cdot\vpb_f \Dp$.
The only non-zero coefficients in the values of $\vpb_f \Dp$ occur in the $X^{k-2}$ coefficients, and thus the same is true for $\vpb_f$.  But, by Lemma \ref{lemma:FE}, the vanishing of the coefficients of $Y^{k-2}$ implies the vanishing of the coefficients of $X^{k-2}$.  Thus, $\vpb_f =0$ which is a contradiction.

For $k=2$, consider the obviously injective map
\begin{align*}
j:  H^1_c(\Gamma,\Fp) &\to H^1_c(\Gamma,\Fp)^2 \\
 \varphi &\mapsto \left(\varphi,-\frac{1}{\alpha} \cdot \varphi\right).
\end{align*}
As $\rhobar_f$ is irreducible, by Ihara's lemma, $(B_p \circ j)(\vpb_f) \neq 0$, and thus, 
$$
\vpb_f-\frac{1}{\alpha}\cdot\vpb_f \Dp \neq 0
$$
as desired.
\end{proof}

We are now in a position to understand the Iwasawa invariants of
$\tnfi$ for $f$ $p$-ordinary with $\rhobar_f$ irreducible.

\begin{prop}
\label{prop:invord}
Assume that $\mu(L_p(f,\omega^i)) = 0$ and that $\rhobar_f$ is irreducible.  Then for $n \gg 0$, we have
$$
\mu(\tnfi) = 0 ~\text{~and~}~ \lambda(\tnfi) = \lambda(L_p(f,\omega^i)).
$$
\end{prop}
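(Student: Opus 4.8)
The plan is to relate $\tnfi$ to the $p$-stabilized Mazur--Tate element $\ti{n}{f_\alpha}$, whose invariants are already pinned down by (\ref{eqn:pstabinv}), and then to show that the difference between the two is negligible for $n \gg 0$.

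First I would pass to a sufficiently large integrally closed extension $\O$ of $\Zp$ (containing $\O_f$ and the unit root $\alpha$) and record the modular-symbol identity $\varphi_{f_\alpha} = \varphi_f - \frac{1}{\alpha}\,\varphi_f \Dp$ in $H^1_c(\Gamma_0, V_{k-2}(\Qpbar))$; this is exactly the computation carried out in the proof of Lemma \ref{lemma:periods}, and it is crucial that Lemma \ref{lemma:periods} lets us use the \emph{same} cohomological periods $\Omega^\pm_f$ on both sides, so the identity is one of genuinely normalized symbols. Since $\ti{n}{\cdot}$ is $\O$-linear in the symbol and only sees its values on the divisors $\{\infty\}-\{a/p^n\}$ (hence is insensitive to the level), applying Lemma \ref{lemma:degen} with $g = k-2$ yields
$$
\tnfi \;=\; \ti{n}{f_\alpha} \;+\; \frac{p^{k-2}}{\alpha}\,\nun{n}{n-1}\!\bigl(\ti{n-1}{f}\bigr)
$$
in $\O[G_n]$ for every $n \geq 1$.

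Next I would control each piece. By (\ref{eqn:pstabinv}) together with the hypothesis $\mu(L_p(f,\omega^i)) = 0$, for $n \gg 0$ we have $\mu(\ti{n}{f_\alpha}) = 0$ and $\lambda(\ti{n}{f_\alpha}) = \lambda(L_p(f,\omega^i))$. For the correction term $C_n := \frac{p^{k-2}}{\alpha}\nun{n}{n-1}(\ti{n-1}{f})$, integrality of Mazur--Tate elements computed with cohomological periods gives $\mu(\ti{n-1}{f}) \geq 0$, and Lemma \ref{lemma:nuninv} gives $\mu(C_n) = (k-2) + \mu(\ti{n-1}{f})$ and, when $k = 2$, $\lambda(C_n) = p^n - p^{n-1} + \lambda(\ti{n-1}{f})$. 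Hence either $C_n = 0$; or $\mu(C_n) \geq 1 > \mu(\ti{n}{f_\alpha})$ (automatic when $k > 2$, and also the case when $k = 2$ and $\mu(\ti{n-1}{f}) > 0$); or $k = 2$, $\mu(C_n) = 0 = \mu(\ti{n}{f_\alpha})$ and $\lambda(C_n) \geq p^n - p^{n-1}$. Taking $n$ large enough that moreover $p^n - p^{n-1} > \lambda(L_p(f,\omega^i))$, in every case $C_n$ is dominated by $\ti{n}{f_\alpha}$: either $\mu(C_n) > \mu(\ti{n}{f_\alpha})$, or $\mu(C_n) = \mu(\ti{n}{f_\alpha}) = 0$ and $\lambda(C_n) > \lambda(\ti{n}{f_\alpha})$. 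A one-line reduction-modulo-$\varpi$ argument then shows $\mu(\tnfi) = \mu(\ti{n}{f_\alpha}) = 0$ and $\lambda(\tnfi) = \lambda(\ti{n}{f_\alpha}) = \lambda(L_p(f,\omega^i))$, as desired.

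I do not expect a serious obstacle. The two points needing care are: (i) the passage from the modular-symbol identity to the Mazur--Tate identity must invoke Lemma \ref{lemma:periods}, since otherwise the two sides would be normalized by incomparable periods and the invariants would not transfer; and (ii) the weight-$2$ case, where $C_n$ is not automatically divisible by $p$, so one cannot simply assert $\tnfi \equiv \ti{n}{f_\alpha}$ modulo a positive power of $p$ and must instead exploit that $\lambda(C_n)$ grows like $p^n - p^{n-1}$ while $\lambda(\ti{n}{f_\alpha})$ stays bounded.
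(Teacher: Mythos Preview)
Your proof is correct and follows the same overall strategy as the paper: pass via Lemma~\ref{lemma:periods} to the identity coming from $\varphi_{f_\alpha}=\varphi_f-\tfrac{1}{\alpha}\varphi_f\Dp$, and then show that the corestriction term is dominated by $\ti{n}{f_\alpha}$ for large $n$.

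The execution differs slightly, and your version is in fact a bit more direct. By invoking Lemma~\ref{lemma:degen} you keep the factor $p^{k-2}$ explicit in the identity $\tnfi=\ti{n}{f_\alpha}+\tfrac{p^{k-2}}{\alpha}\nun{n}{n-1}(\ti{n-1}{f})$; for $k>2$ this immediately gives $\mu(C_n)>0$ and hence $\mu(\tnfi)=0$, $\lambda(\tnfi)=\lambda(\ti{n}{f_\alpha})$ for each large $n$, with no further input. The paper instead appeals to the three-term relation (Proposition~\ref{prop:Qseqn}) to propagate $\mu(\tnfi)=0$ from a single large $n$ to all larger $n$, and in weight~$2$ it runs an inductive argument (again via the three-term relation) to get $\mu(\ti{n+1}{f})=0$ from $\mu(\tnfi)=0$. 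Your case split on whether $\mu(\ti{n-1}{f})>0$ handles weight~$2$ without any induction, since in either case $C_n$ is dominated (by $\mu$ or by $\lambda$) once $p^n-p^{n-1}>\lambda(L_p(f,\omega^i))$. So both arguments rest on the same identity and on Lemma~\ref{lemma:nuninv}, but yours dispenses with Proposition~\ref{prop:Qseqn} entirely.
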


\begin{proof}
By Lemma \ref{lemma:periods},
$$
\varphi_{f_\alpha} = \varphi_f - \frac{1}{\alpha} \varphi_f \Dp,
$$
and hence,
\begin{equation}
\label{eqn:pstab}
\ti{n}{f_\alpha} = \tnfi - \frac{1}{\alpha} \nun{n}{n-1} (\ti{n-1}{f}).
\end{equation}
Since we are assuming that $\mu(\ti{n}{f_\alpha}) = 0$ for $n \gg 0$, by (\ref{eqn:pstab}) there exist sufficiently large $n$ for which $\mu(\tnfi)=0$.

If $k>2$, the argument proceeds as follows: the three-term relation of Proposition~\ref{prop:Qseqn} implies that if $\mu(\tnfi)=0$ for one $n$, then $\mu(\ti{m}{f})=0$ for all $m>n$ as desired.  For the $\lambda$-invariants,
by Lemma \ref{lemma:nuninv}, 
$$
\lambda(\nun{n}{n-1} (\ti{n-1}{f})) \geq p^n - p^{n-1},
$$
and thus for $n$ large enough, 
$$
\lambda(\ti{n}{f_\alpha}) < \lambda(\nun{n}{n-1} (\ti{n-1}{f})).
$$
By (\ref{eqn:pstab}) and (\ref{eqn:pstabinv}), we then have
$$
\lambda(\tnfi) = \lambda(\ti{n}{f_\alpha}) = \lambda(L_p(f,\omega^i)) 
$$
as desired.

For the case $k=2$, one must argue more carefully because the three-term relation does not guarantee the vanishing of $\mu(\tnfi)$ for all $n$ if one knows the vanishing for a single $n$.  From (\ref{eqn:pstab}) we do know that there exists sufficiently large $n$ such that $\mu(\tnfi)=0$.  For such a sufficiently large $n$, (\ref{eqn:pstab}) implies that 
$$
\lambda(\tnfi) = \lambda(\ti{n}{f_\alpha}) 
$$ 
as before.  Hence, 
$$
\lambda(\tnfi) \neq \lambda(\nun{n}{n-1}(\t{n-1}{f}))
$$ 
which implies that the reduction of these two elements are not equal.  In particular, by (\ref{eqn:Qseqn}) of Proposition~\ref{prop:Qseqn}, $\mu(\pin{n+1}{n} (\t{n+1}{f}))=0$, and thus by Lemma \ref{lemma:pininv}, $\mu(\t{n+1}{f})=0$ as desired.  Thus inductively, $\mu(\tnfi)$ vanishes for all sufficiently large $n$.  Finally, the statement about $\lambda$-invariants follows just as in the $k>2$ case.
\end{proof}

\section{The non-ordinary case}

In the non-ordinary case, the polynomial $x^2-a_px +p^{k-1}$ has no unit root.  Thus, the construction of $p$-adic $L$-functions described in section \ref{sec:pL} does not yield integral power series.  Indeed, if $\alpha$ is either root of this quadratic, then dividing by powers of $\alpha$ introduces $p$-adically unbounded denominators.  In the non-ordinary case, we therefore focus our attention on the elements $\tnfi$, rather than on passing to a limit to construct an unbounded $p$-adic $L$-function.  

\subsection{Known results in weight 2}

For modular forms of weight 2, the Iwasawa invariants of the associated Mazur--Tate elements were studied in detail by Kurihara \cite{Kurihara} and Perrin-Riou \cite{PR}. We summarize their results in the following theorem.

\begin{thm}
\label{thm:wt2}
Let $i$ be an integer with $0 \leq i \leq p-2$.
\begin{enumerate}
\item There exist constants $\mu^\pm(f,\omega^i) \in \Z^{\geq 0}$ such that for $n \gg 0$,
$$
\mu(\ti{2n}{f}) = \mu^+(f,\omega^i) \text{~~and~~} 
\mu(\ti{2n+1}{f}) = \mu^-(f,\omega^i).
$$
\item If $\mu^+(f,\omega^i) = \mu^-(f,\omega^i)$, then there exist constants $\lambda^\pm(f,\omega^i) \in \Z^{\geq 0}$ such that for $n \gg 0$,
$$
\lambda(\tnfi) = q_n + \begin{cases} 
 \lambda^+(f,\omega^i) & i \text{~~even} \\
 \lambda^-(f,\omega^i) & i \text{~~odd}
\end{cases}
$$
where
$$
q_n  = \begin{cases} 
p^{n-1} - p^{n-2} + \cdots + p - 1  & \text{~if~} n \text{~~even} \\
p^{n-1} - p^{n-2} + \cdots + p^2 - p  & \text{~if~} n \text{~~odd}.
\end{cases}
$$
\end{enumerate}
\end{thm}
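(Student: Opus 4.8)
The plan is to run everything off the three-term relation of Proposition~\ref{prop:Qseqn}, which in weight $k=2$ reads
\[
\pin{n+1}{n}(\ti{n+1}{f}) = a_p\,\tnfi - \nun{n}{n-1}(\ti{n-1}{f}),
\]
together with the single structural fact that $\ord_p(a_p)>0$ since $f$ is non-ordinary. The point is that $a_p\tnfi$ has strictly larger $\mu$-invariant than $\tnfi$ itself, so $\varpi$-adically the relation degenerates to $\pin{n+1}{n}(\ti{n+1}{f})\equiv -\nun{n}{n-1}(\ti{n-1}{f})$, coupling level $n+1$ directly to level $n-1$; this, fed through Lemmas~\ref{lemma:nuninv} and~\ref{lemma:pininv}, is the source both of the period-two behavior and of the corestriction growth factor $p^n-p^{n-1}$ that accumulates into $q_n$.

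When $a_p=0$ the displayed congruence is an exact equality, and one can argue directly. Since $\mu(\pin{n+1}{n}(\theta))\ge\mu(\theta)$ in general, while $\mu\bigl(\pin{n+1}{n}(\ti{n+1}{f})\bigr)=\mu\bigl(\nun{n}{n-1}(\ti{n-1}{f})\bigr)=\mu(\ti{n-1}{f})$ by Lemma~\ref{lemma:nuninv}(1), we get $\mu(\ti{n+1}{f})\le\mu(\ti{n-1}{f})$; thus $\{\mu(\ti{2n}{f})\}$ and $\{\mu(\ti{2n+1}{f})\}$ are each non-increasing, hence eventually constant (these invariants are non-negative and lie in $\tfrac{1}{e}\Z$, with $e$ the ramification index of $\O/\Zp$). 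This proves (1) and defines $\mu^\pm(f,\omega^i)$. For (2), assume the two limits coincide; after rescaling the $\ti{n}{f}$ by a power of $\varpi$ (which changes neither the relation nor the $\lambda$-invariants) we may assume the common value is $0$. Then Lemma~\ref{lemma:pininv}(2) gives $\lambda(\ti{n+1}{f})=\lambda\bigl(\pin{n+1}{n}(\ti{n+1}{f})\bigr)=\lambda\bigl(\nun{n}{n-1}(\ti{n-1}{f})\bigr)$, and Lemma~\ref{lemma:nuninv}(2) turns this into the recursion $\lambda(\ti{n+1}{f})=p^n-p^{n-1}+\lambda(\ti{n-1}{f})$ for $n\gg0$. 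Telescoping this down each parity class and using the identity $q_{n+1}-q_{n-1}=p^n-p^{n-1}$ shows that $\lambda(\ti{n}{f})-q_n$ is eventually constant on each parity class; organizing these constants into $\lambda^\pm(f,\omega^i)$---with the sign matched to the parities of $n$ and $i$ via the $\iota$-eigenspace decomposition---gives (2) in this case.

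For general non-ordinary $f$ the term $a_p\tnfi$ can resonate with $\nun{n}{n-1}(\ti{n-1}{f})$ at the levels where $\ord_p(a_p)+\mu(\tnfi)=\mu(\ti{n-1}{f})$, and the naive induction no longer controls the invariants. The remedy, carried out by Perrin-Riou~\cite{PR} and Kurihara~\cite{Kurihara} (with the case $a_p=0$ also in~\cite{Pollackthesis}), is to package the $\tnfi$ into bounded ``half'' $p$-adic $L$-functions. Although $\{\alpha^{-n}\ti{n}{f_\alpha}\}$, for $\alpha$ a root of $x^2-a_px+p$, has unbounded denominators, the Amice--V\'elu/Vishik growth estimates together with Perrin-Riou's factorization of $L_p(f,\omega^i)$ yield two genuine elements $L_p^\pm(f,\omega^i)\in\Lambda$; and one shows, by comparing interpolation values at finite level and using the three-term relation, that up to a unit $\tnfi$ is the image in $\O[G_n]$ of a $\Zp[G_n]$-linear combination of $L_p^+(f,\omega^i)$ and $L_p^-(f,\omega^i)$ in which the two are multiplied by the images of $\prod\Phi_{p^m}$ taken over complementary sets of integers $m\le n$ of fixed parity. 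One of these two cyclotomic products is distinguished of degree $q_n$ and the other distinguished of degree $p^n-1-q_n$ (which is which depends on the parity of $n$), so both have $\mu$-invariant $0$. By Lemma~\ref{lemma:lambdainv} the images of $L_p^\pm(f,\omega^i)$ in $\O[G_n]$ have invariants $\mu(L_p^\pm(f,\omega^i))$ and $\lambda(L_p^\pm(f,\omega^i))$ for $n\gg0$, so comparing the two summands gives (1) with $\mu^\pm(f,\omega^i):=\mu(L_p^\pm(f,\omega^i))$. When $\mu^+(f,\omega^i)=\mu^-(f,\omega^i)$, the summand carrying the degree-$q_n$ factor is the one of smallest $\lambda$-invariant and hence dominates, giving (2) with $\lambda^\pm(f,\omega^i):=\lambda(L_p^\pm(f,\omega^i))$ (the parity matching of the sign again coming out of the functional equation and the $\iota$-action); whereas when $\mu^+\neq\mu^-$ the $\mu$-smaller summand can be the one of degree $p^n-1-q_n$, and then $\lambda(\tnfi)-q_n$ diverges---which is exactly why (2) imposes that hypothesis.

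The main obstacle is the construction invoked above: proving that $L_p^\pm(f,\omega^i)$ are \emph{bounded}---i.e.\ that the unbounded denominators introduced by dividing by $\alpha^n$ are exactly cancelled by the growth of the cyclotomic-polynomial factors, which is the heart of Perrin-Riou's $\pm$-theory---and then pinning down the precise combinatorial identity expressing $\tnfi$ through the cyclotomic products and $L_p^\pm(f,\omega^i)$, in particular the correct pairing of the $\pm$'s with the parities of $n$ and $i$, where the functional equation (cf.\ Lemma~\ref{lemma:FE}) and the involution $\iota$ come in. Everything downstream is routine bookkeeping with Lemmas~\ref{lemma:lambdainv}--\ref{lemma:pininv}.
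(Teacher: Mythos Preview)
Your proposal is correct and matches the paper's approach: both defer the full statement to Perrin-Riou~\cite{PR} and then illustrate the mechanism via the mod~$\varpi$ degeneration of the three-term relation together with Lemmas~\ref{lemma:nuninv} and~\ref{lemma:pininv}. The only cosmetic difference is that the paper's worked special case assumes $\mu(\tnfi)=0$ for $n\gg0$ (the conjecturally generic situation, and the one actually used downstream) rather than $a_p=0$, and the paper does not spell out the $L_p^\pm$ packaging you sketch for the general case, simply citing~\cite{PR}.
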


\begin{remark}
\noindent
\begin{enumerate}
\item Perrin-Riou conjectured \cite[6.1.1]{PR} that $\mu^+(f,\omega^i)=\mu^-(f,\omega^i)=0$ (see also \cite[Conjecture 6.3]{Pollackthesis}).  This is an analogue of Greenberg's conjecture on the vanishing of $\mu$ in the ordinary case.  Indeed, Greenberg conjectures that $\mu$ vanishes whenever $\rhobar_f$ is irreducible; if $f$ has weight 2 and is \nop,  then $\rhobar_f$ is always irreducible.  

\item In \cite{GIP}, the assumption that $\mu^+(f,\omega^i)=\mu^-(f,\omega^i)$ is removed, but the resulting formula for $\lambda$ is slightly different in some cases when $\mu^+(f,\omega^i) \neq \mu^-(f,\omega^i)$.  However, since this case is conjectured to never occur, and in this paper we will only use these formulas when $\mu^\pm(f,\omega^i)=0$, we will not go further into this complication.

\item Unlike the ordinary case, the $\lambda$-invariants of these non-ordinary Mazur--Tate elements always grow without bound because of the presence of the $q_n$ term which is ${\text O}(p^{n-1})$.  
\end{enumerate}

\end{remark}

\begin{proof}[Proof of Theorem \ref{thm:wt2}]
In \cite{PR}, it is proven that any sequence of elements $\theta_n \in \O[G_n]$ which satisfy the three-term relation of
Proposition~\ref{prop:Qseqn} satisfy the conclusions of this theorem.  To give the spirit of these arguments, we give a proof here in the case when $\mu(\tnfi) = 0$ for $n \gg 0$.  

Since $a_p$ is not a unit, the three-term relation implies that
\begin{equation}
\label{eqn:rel}
\pin{n+1}{n}(\ti{n+1}{f}) \equiv \nun{n}{n-1}(\ti{n-1}{f}) \pmod{\varpi}.
\end{equation}
Thus for $n$ large enough we have
\begin{align*}
\lambda(\ti{n+1}{f}) &= \lambda(\pin{n+1}{n}(\ti{n+1}{f})) && (\text{by~ Lemma~} \ref{lemma:pininv})\\
&= \lambda(\nun{n}{n-1}(\ti{n-1}{f})) && (\text{by~} (\ref{eqn:rel}))\\
&= p^n - p^{n-1} + \lambda(\ti{n-1}{f}) && (\text{by~Lemma~} \ref{lemma:nuninv}).
\end{align*}
Proceeding inductively then yields the theorem.
\end{proof}

\subsection{Differences in weights greater than 2}

To compare with the case of weight $2$, we note that when $f$ is of any weight $k > 2$, then the three-term relation takes the form
$$
\pin{n+1}{n}(\ti{n+1}{f}) = a_p \tnfi - p^{k-2} \nun{n}{n-1}(\ti{n-1}{f}).
$$
The factor of $p^{k-2}$ in the third term prevents the arguments of the previous section from going through.  Indeed, the right hand side of the above equation vanishes mod $\varpi$, and one cannot make general deductions about the Iwasawa invariants of such sequences unlike the case when $k=2$.  Instead, the strategy of this paper is to make a systematic study of congruences between Mazur--Tate elements in weight $k$ and in weight $2$, and then to make deductions about Iwasawa invariants by invoking Theorem \ref{thm:wt2}.

\subsection{Lower bound for $\mu$}

We note that there is an obvious lower bound for $\mu$-invariants of Mazur--Tate elements in weights greater than 2.  For $\varphi \in H^1_c(\Gamma,V_{k-2}(\O))$, set
$$
\mum(\varphi)= \min_{D \in \Delta_0} \ord_p \left(  \varphi(D)\Big|_{(X,Y)=(0,1)}   \right);
$$
thus $\mum(\varphi)$ is the minimum valuation of the coefficients of $Y^{k-2}$ in the values of $\varphi$.   We write $\mm{f}{\pm}$ for $\mum(\varphi_f^\pm)$. 

\begin{prop}
We have that
\begin{enumerate}
\item $\mm{f}{\pm} < \infty$,
\item $\mu(\tnfi) \geq \mm{f}{\ve_i}$.
\end{enumerate}
\end{prop}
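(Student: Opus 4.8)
The plan is to deduce (1) from an elementary ``rotation'' argument showing that $\varphi_f^\pm$ cannot take all of its $Y^{k-2}$-coefficients to $0$, and to deduce (2) from the identity $\theta_{n,i}(f) = \theta_{n,i}(\varphi_f^{\ve_i})$, which I will obtain using the involution $\iota = \psmallmat{-1}{0}{0}{1}$ together with the $\omega^i$-decomposition of the group ring.

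For (1): since $\Omega_f^\pm$ are cohomological periods, $\varphi_f^\pm \not\equiv 0$, so fix $D_0 \in \Delta_0$ with $\varphi_f^\pm(D_0) \neq 0$ in $V_{k-2}(\O_f)$. For $\gamma = \psmallmat{a}{b}{c}{d} \in \Gamma$ the modular symbol relation $\varphi_f^\pm(\gamma D_0)\big|\gamma = \varphi_f^\pm(D_0)$ gives $\varphi_f^\pm(\gamma D_0) = \varphi_f^\pm(D_0)\big|\gamma^{-1}$, and unwinding the definition of the $\GL_2$-action on $V_{k-2}$ one computes
$$
\varphi_f^\pm(\gamma D_0)\Big|_{(X,Y)=(0,1)} \;=\; \varphi_f^\pm(D_0)\Big|_{(X,Y)=(c,d)}.
$$
As $\gamma$ runs over $\Gamma = \Gamma_0(N)$ (which preserves $\Delta_0$), the point $[c:d] \in \P^1(\Q)$ runs over an infinite set, e.g.\ $[kN:1]$ for all $k \in \Z$. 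A nonzero homogeneous polynomial of degree $k-2$ has at most $k-2$ zeros in $\P^1(\Q)$, so $\varphi_f^\pm(D_0)$ cannot vanish at all of these points; hence $\varphi_f^\pm(\gamma D_0)\big|_{(0,1)} \neq 0$ for some $\gamma$, and $\mm{f}{\pm} = \mum(\varphi_f^\pm) < \infty$.

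For (2): I first claim $\theta_{n,i}(f) = \theta_{n,i}(\varphi_f^{\ve_i})$. Since $\iota$ fixes $\{\infty\}$ and sends $\{a/p^m\}$ to $\{-a/p^m\}$, and since $\big(P\big|\iota\big)(0,1) = (-1)^{k-2} P(0,1) = P(0,1)$ (because $k$ is even, $f$ having trivial nebentypus on $\Gamma_0(N)$), a direct computation together with the substitution $a \mapsto -a$ yields $\vartheta_m(\varphi_f\big|\iota) = \sigma_{-1}\cdot \vartheta_m(\varphi_f)$ in $\O_f[\G_m]$ for all $m$. Applying this with $m = n+1$ to $\varphi_f\big|\iota = \varphi_f^+ - \varphi_f^-$ and $\varphi_f = \varphi_f^+ + \varphi_f^-$, and then applying the ring homomorphism $\omega^i \colon \O_f[\G_{n+1}] \to \O_f[G_n]$, under which $\sigma_{-1} \mapsto \omega(-1)^i = (-1)^i$, gives
$$
\theta_{n,i}(\varphi_f^+) - \theta_{n,i}(\varphi_f^-) \;=\; (-1)^i\bigl(\theta_{n,i}(\varphi_f^+) + \theta_{n,i}(\varphi_f^-)\bigr).
$$
As $p$ is odd, $2 \in \O_f^\times$, so this forces $\theta_{n,i}(\varphi_f^{-\ve_i}) = 0$, whence $\theta_{n,i}(f) = \theta_{n,i}(\varphi_f^+) + \theta_{n,i}(\varphi_f^-) = \theta_{n,i}(\varphi_f^{\ve_i})$. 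Finally, every coefficient of $\vartheta_{n+1}(\varphi_f^{\ve_i})$ has the form $\varphi_f^{\ve_i}(\{\infty\}-\{a/p^{n+1}\})\big|_{(0,1)}$ with $\{\infty\}-\{a/p^{n+1}\} \in \Delta_0$, hence has $\ord_p \geq \mum(\varphi_f^{\ve_i}) = \mm{f}{\ve_i}$; since $\omega^i$ merely sends each $\sigma_a$ to a unit multiple of its image $\bar\sigma_a \in G_n$, the element $\theta_{n,i}(\varphi_f^{\ve_i})$ again has all coefficients of $\ord_p \geq \mm{f}{\ve_i}$, so $\mu(\theta_{n,i}(f)) \geq \mm{f}{\ve_i}$.

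The only step requiring genuine care is the identity $\vartheta_m(\varphi_f\big|\iota) = \sigma_{-1}\,\vartheta_m(\varphi_f)$ and the verification that the resulting linear relation between $\theta_{n,i}(\varphi_f^+)$ and $\theta_{n,i}(\varphi_f^-)$ annihilates precisely the ``wrong parity'' symbol $\varphi_f^{-\ve_i}$ --- this is where the evenness of $k$ and the oddness of $p$ both enter. The rest is routine bookkeeping with the $\GL_2$-action on $V_{k-2}$ and the definition of the Mazur--Tate elements.
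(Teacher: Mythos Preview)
Your argument is correct, and for part (1) it takes a genuinely different route from the paper's. The paper proves (1) by contradiction with Rohrlich's non-vanishing theorem: if $\mm{f}{\pm}=\infty$ then the relevant Mazur--Tate elements vanish identically, forcing $L(f,\chi,1)=0$ for all $\chi$ of $p$-power conductor and appropriate sign, which is impossible. Your argument instead stays on the algebraic side: using the $\Gamma$-equivariance of $\varphi_f^\pm$ you rewrite $\varphi_f^\pm(\gamma D_0)\big|_{(0,1)}$ as the evaluation of the fixed nonzero polynomial $\varphi_f^\pm(D_0)$ at the bottom row of $\gamma$, and then observe that a nonzero degree-$(k-2)$ form cannot vanish at infinitely many points of $\P^1(\Q)$. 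This is more elementary and self-contained, avoiding the appeal to an analytic non-vanishing result; the trade-off is that the paper's argument, though heavier, also shows the slightly stronger fact that the Mazur--Tate elements themselves are eventually nonzero.

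For part (2) the paper simply declares the inequality ``immediate'' from the construction of $\tnfi$ out of $\varphi_f^{\ve_i}$, whereas you spell out the parity computation showing $\theta_{n,i}(f)=\theta_{n,i}(\varphi_f^{\ve_i})$ via the identity $\vartheta_m(\varphi_f\,|\,\iota)=\sigma_{-1}\vartheta_m(\varphi_f)$ and the fact that $\sigma_{-1}\mapsto(-1)^i$ under $\omega^i$. This is exactly the content the paper is taking for granted, so here the two approaches coincide; you have simply made explicit what the paper leaves implicit.
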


\begin{proof}
For the first part, if $\mm{f}{\pm} = \infty$, then $\tnf$ vanishes for every $n$.  By Proposition \ref{prop:interpolation}, we then have that $L(f,\chi,1) = 0$ for every character $\chi$ of conductor a power of $p$.  But this contradicts a non-vanishing theorem of Rohrlich \cite{Rohrlich}.

The second part is immediate as $\tnfi$ is constructed out of the coefficients of $Y^{k-2}$ of certain values of $\varphi_f^{\ve_i}$.
\end{proof}

Recall that $\varphi_f$ is normalized so that all of its values have coefficients which are integral and at least one which is a unit.  Thus, when $k=2$, by definition $\mm{f}{\pm}$ is always 0.  However, when $k>2$, it is possible that the coefficient of $Y^{k-2}$ in every value of $\varphi_f$ is a non-unit, and that the required unit coefficient occurs in another monomial;  in this case $\mm{f}{\pm}$ would be positive.

\subsection{A map from weight $k$ to weight $2$}
\label{sec:alpha}

In this section we discuss a map from weight $k$ modular symbols to weight $2$ modular symbols over $\Fp$ introduced by Ash and Stevens in \cite{AS}.  
Set 
$$
S_0(p) := \left\{ \psmallmat{a}{b}{c}{d} \in M_2(\Z) : ad - bc \neq 0, ~ p \mid c, ~ p \nmid a \right\},
$$
$g=k-2$, and $\Vbar_g = V_g(\F)$.

\begin{lemma}
\label{lemma:alpha}
For $g>0$ and $g \equiv 0 \pmod{p-1}$, the map 
\begin{align*}
\Vbar_g &\lra \F \\
P(X,Y) &\mapsto P(0,1)
\end{align*}
is $S_0(p)$-equivariant, and thus induces a Hecke-equivariant map
$$
\alpha: H^1_c(\Gamma,\Vbar_g) \lra H^1_c(\Gamma_0,\F).
$$
\end{lemma}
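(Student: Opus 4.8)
The plan is to reduce the statement to the elementary congruence $a^g\equiv 1\pmod p$ for $p\nmid a$, and then to obtain the induced map on cohomology and its Hecke-equivariance by functoriality, the one non-formal point being a coset computation at $p$.

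First I would verify the $S_0(p)$-equivariance of the map $\beta\colon \Vbar_g\to\F$, $P(X,Y)\mapsto P(0,1)$, directly from the definition of the action. For $\gamma=\psmallmat{a}{b}{c}{d}\in S_0(p)$ and $P\in\Vbar_g$ one has $(P|\gamma)(X,Y)=P(dX-cY,\,-bX+aY)$, hence $(P|\gamma)(0,1)=P(-c,a)$. Since $p\mid c$ we have $-c=0$ in $\F$, so $(P|\gamma)(0,1)=P(0,a)=a^g\,P(0,1)$ by homogeneity of degree $g$ (note $a\neq 0$ in $\F$ because $p\nmid a$). Finally $g\equiv 0\pmod{p-1}$ together with $p\nmid a$ gives $a^g=1$ in $\F$ by Fermat's little theorem, so $(P|\gamma)(0,1)=P(0,1)$. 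Thus $\beta$ is a morphism of $S_0(p)$-modules, where $\F$ carries the trivial $S_0(p)$-action. It is precisely here that the hypothesis $g\equiv 0\pmod{p-1}$ is used: without it $\beta$ would only be equivariant for a nontrivial twist.

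Next I would pass to cohomology. Since $\Gamma_0=\Gamma_0(Np)\subseteq S_0(p)$ — for $\gamma\in\Gamma_0$ one has $p\mid c$, and $\det\gamma=1$ forces $p\nmid a$ — the map $\beta$ is in particular $\Gamma_0$-equivariant, so it induces $\beta_*\colon H^1_c(\Gamma_0,\Vbar_g)\to H^1_c(\Gamma_0,\F)$. Precomposing with the restriction map $H^1_c(\Gamma,\Vbar_g)\to H^1_c(\Gamma_0,\Vbar_g)$ (available since $\Gamma_0\subseteq\Gamma$) defines $\alpha$; on modular symbols this is $\alpha(\varphi)(D)=\varphi(D)\big|_{(X,Y)=(0,1)}$.

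Finally I would check Hecke-equivariance. For $\ell\nmid Np$ the standard representatives of the $T_\ell$-correspondence (lower-left entry $0$, upper-left entry $1$ or $\ell$, both prime to $p$) lie in $S_0(p)$, so $\alpha$ commutes with $T_\ell$ by functoriality of the construction. For the operator at $p$: write $T_p$ at level $\Gamma$ with representatives $\psmallmat{1}{j}{0}{p}$, $0\le j\le p-1$, together with $\psmallmat{p}{0}{0}{1}$, and $U_p$ at level $\Gamma_0$ with the representatives $\psmallmat{1}{j}{0}{p}$. The matrices $\psmallmat{1}{j}{0}{p}$ lie in $S_0(p)$, so by the first step they contribute identical terms to $\alpha(T_p\varphi)$ and to $U_p(\alpha\varphi)$; and the leftover representative $\psmallmat{p}{0}{0}{1}$ contributes, after applying $\beta$, a term equal to $p^g$ times a value of $\varphi$, which vanishes in $\F$ since $g>0$. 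Hence $\alpha(T_p\varphi)=U_p(\alpha\varphi)$, as in \cite{AS}. I expect this last paragraph to be the main obstacle: the coset bookkeeping relating $T_p$ on the source to $U_p$ on the target through $\beta_*\circ\mathrm{res}$. Everything else is formal once one records the two elementary facts $a^g=1$ and $p^g=0$ in $\F$.
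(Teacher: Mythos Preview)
Your proposal is correct and follows exactly the approach the paper takes: the paper's proof simply states that the lemma is a straightforward computation, and singles out the one non-formal point, namely that $\bigl(P\big|\psmallmat{p}{0}{0}{1}\bigr)\big|_{(X,Y)=(0,1)}=0$ in $\F$ for $g>0$, which is precisely your observation that the extra $T_p$-representative contributes $p^g$ times a value of $\varphi$. Your write-up supplies the details the paper omits (the Fermat congruence $a^g=1$ for $S_0(p)$-equivariance, the factorization through restriction to $\Gamma_0$, and the coset comparison), but the argument is the same.
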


\begin{remark}
By Hecke-equivariant we mean the standard concept away from $p$, and at $p$, we mean that $\alpha$ intertwines the action of $T_p$ on the source with $U_p$ on the target. 
\end{remark}

\begin{proof}
This lemma follows from a straightforward computation.  We note that the Hecke-equivariance at $p$ follows from the fact that for $P \in \Vbar_g$ and $g>0$,
$$
\left( P \big| \psmallmat{p}{0}{0}{1} \right)\Big|_{(X,Y)=(0,1)} = 0.
$$\end{proof}

The following simple lemma is the key to our approach of comparing Mazur--Tate elements of weight $k$ and weight 2.

\begin{lemma}
\label{lemma:alphastick}
For $\varphi \in H^1_c(\Gamma,V_{k-2}(\O))$,
$$
\ttn{\alpha(\vpb)} = \ttn{\vpb} = \red{\ttn{\varphi}} \text{~in~} \F[\G_n],
$$
where $\vpb$ is the reduction of $\varphi$ modulo $\varpi$.
\end{lemma}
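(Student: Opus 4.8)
The plan is to unwind the definitions of both sides. Recall from \eqref{eqn:stickel} that for any modular symbol $\psi$ on a congruence subgroup and any ring $R$,
$$
\tt{n}{\psi} = \sum_{a \in (\Z/p^n\Z)^\times} \psi\left( \{\infty\} - \{a/p^n\}\right)\Big|_{(X,Y)=(0,1)} \cdot \sigma_a,
$$
so the Mazur--Tate element depends on $\psi$ only through the scalars $\psi(D)\big|_{(X,Y)=(0,1)}$ for $D \in \Div^0(\P^1(\Q))$. The key observation is that the map $\alpha$ of Lemma~\ref{lemma:alpha} is built out of exactly this ``evaluate at $(0,1)$'' operation, followed by restriction of level from $\Gamma$ to $\Gamma_0$. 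Since restriction of level does not change the divisors $\{\infty\}-\{a/p^n\}$ nor the evaluation $(X,Y)=(0,1)$, for any $\psi \in H^1_c(\Gamma,\Vbar_g)$ we have $\alpha(\psi)(D) = \psi(D)\big|_{(X,Y)=(0,1)}$ as an element of $\F$; consequently $\tt{n}{\alpha(\psi)} = \tt{n}{\psi}$ directly from the displayed formula, where on the right we interpret $\psi(D)\big|_{(X,Y)=(0,1)}$ in $\F$.

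First I would establish the rightmost equality $\ttn{\vpb} = \red{\ttn{\varphi}}$: this is simply the statement that reduction modulo $\varpi$ commutes with the finite sum defining the Mazur--Tate element and with the linear evaluation map $P \mapsto P(0,1)$, which it does because $\O[\G_n] \to \F[\G_n]$ is a ring homomorphism carrying $\varphi_f(D)\big|_{(X,Y)=(0,1)}$ to $\vpb(D)\big|_{(X,Y)=(0,1)}$. Then I would establish $\ttn{\alpha(\vpb)} = \ttn{\vpb}$ by the observation of the previous paragraph. One small point to verify is that $\alpha(\vpb)$, a priori a symbol on $\Gamma_0$, has its Mazur--Tate element computed by the same formula as that of $\vpb$ on $\Gamma$: this holds because the definition of $\tt{n}{\cdot}$ never references the congruence subgroup, only the values of the symbol on divisors supported at cusps, and $\Div^0(\P^1(\Q))$ is the same regardless of level.

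There is essentially no obstacle here; the lemma is a bookkeeping statement and the ``hard part,'' such as it is, is just checking that $\alpha$ is given on values by $\psi \mapsto \left(D \mapsto \psi(D)\big|_{(X,Y)=(0,1)}\right)$, which is immediate from the construction in Lemma~\ref{lemma:alpha}. I would present the argument as a two-line display tracking the defining sum through the identifications, noting that all three expressions are literally the same element of $\F[\G_n]$ once one identifies the coefficient of $\sigma_a$ in each.
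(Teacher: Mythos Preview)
Your proposal is correct and follows essentially the same approach as the paper's proof: both observe that the Mazur--Tate element depends only on the evaluation $P \mapsto P(0,1)$ of the values of the symbol, that $\alpha$ is precisely this evaluation (after level restriction, which is irrelevant to the formula), and that the second equality is the trivial commutation of reduction mod $\varpi$ with a finite $\O$-linear sum. The paper dispatches this in two sentences; your version is simply a more detailed unwinding of the same content.
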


\begin{proof}
The first equality is true as these Mazur--Tate elements depend only on the coefficients of $Y^{k-2}$ in the values of $\vpb$, and the map $\alpha$ preserves
these coefficients.  The second equality is clear. 
\end{proof}

The following lemma gives the analogue for modular symbols of the $\theta$-operator for mod $p$ modular forms.  In what follows, if $M$ is a $S_0(p)$-module, then $M(1)$ is the determinant twist of $M$; for a Hecke-module $M$, the Hecke-operator $T_n$ acts on $M(1)$ by $nT_n$.

\begin{lemma}
The map 
\begin{align*}
\Vbar_{g-p-1}(1) &\lra \Vbar_g \\
P(X,Y) &\mapsto (X^pY-XY^p) \cdot P(X,Y)
\end{align*}
is $S_0(p)$-equivariant, and thus induces a Hecke-equivariant map
$$
\theta : H^1_c(\Gamma,\Vbar_{g-p-1})(1) \lra H^1_c(\Gamma_0,\Vbar_g).
$$
\end{lemma}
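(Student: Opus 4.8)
The plan is to reduce the lemma to the single computation
$$
(X^pY - XY^p)\,\big|\,\gamma \;=\; \det(\gamma)\cdot(X^pY - XY^p) \qquad \text{for all } \gamma \in S_0(p),
$$
carried out in $\Vbar_{p+1} = V_{p+1}(\F)$, and then to invoke the standard functoriality of compactly supported cohomology, exactly as was done for the map $\alpha$.

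First I would do this computation directly. Write $\gamma = \psmallmat{a}{b}{c}{d} \in S_0(p)$; since $\gamma$ has integer entries, $c$ and $d$ reduce into $\F_p$, so in characteristic $p$ one has $(dX - cY)^p = dX^p - cY^p$. Using $(P|\gamma)(X,Y) = P(dX - cY,\,-bX + aY)$ and expanding
$$
(dX^p - cY^p)(-bX + aY) \;-\; (dX - cY)(-bX^p + aY^p),
$$
the $X^{p+1}$ and $Y^{p+1}$ terms cancel and what remains is $(ad - bc)(X^pY - XY^p) = \det(\gamma)(X^pY - XY^p)$, which is the displayed identity.

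Next I would deduce $S_0(p)$-equivariance of the module map $P \mapsto (X^pY - XY^p)P$ (the product has degree $(p+1) + (g - p - 1) = g$, so it does land in $\Vbar_g$). For $P \in \Vbar_{g-p-1}$,
$$
\big((X^pY - XY^p)P\big)\big|\gamma \;=\; \big((X^pY-XY^p)\big|\gamma\big)\cdot (P|\gamma) \;=\; \det(\gamma)\,(X^pY - XY^p)\cdot(P|\gamma),
$$
and $\det(\gamma)\,(P|\gamma)$ is precisely the action of $\gamma$ on $P$ in the determinant twist $\Vbar_{g-p-1}(1)$; this is the twist under which $T_n$ acts by $nT_n$, since the double cosets defining the Hecke operators are represented by matrices of determinant $n$ lying in $S_0(p)$. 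Hence $P \mapsto (X^pY-XY^p)P$ is a morphism of $S_0(p)$-modules $\Vbar_{g-p-1}(1) \to \Vbar_g$.

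Finally, both $\Gamma = \Gamma_0(N)$ and $\Gamma_0 = \Gamma_0(Np)$ lie in $S_0(p)$ — for $\gamma \in \Gamma_0(Np)$ one has $p \mid c$ and $ad - bc = 1$, forcing $p \nmid a$ — so the module map induces, after first restricting cohomology from level $\Gamma$ to level $\Gamma_0$ exactly as for the map $\alpha$ of Lemma \ref{lemma:alpha}, a map $\theta : H^1_c(\Gamma, \Vbar_{g-p-1})(1) \to H^1_c(\Gamma_0, \Vbar_g)$; on the $\SL_2$-subgroup $\Gamma$ the determinant character is trivial, so the outer twist only records the Hecke action, and Hecke-equivariance (with $T_\ell \mapsto \ell T_\ell$ away from $p$, and at $p$ just as in Lemma \ref{lemma:alpha}) follows from the double-coset formalism since representatives of all the Hecke operators, including $U_p$ at level $\Gamma_0$, lie in $S_0(p)$. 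I do not expect a genuine obstacle: the only points requiring care are the characteristic-$p$ identity $(dX - cY)^p = dX^p - cY^p$, which relies on $\gamma$ having entries in $\Z$, and keeping the determinant twist consistent with the $nT_n$ normalization.
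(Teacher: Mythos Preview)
Your proof is correct and is precisely the ``straightforward computation'' the paper alludes to: the paper's own proof consists of a single sentence to this effect, and you have simply written out the details. In fact your computation shows slightly more than the lemma claims, since the identity $(X^pY - XY^p)\,|\,\gamma = \det(\gamma)(X^pY - XY^p)$ holds for any $\gamma \in M_2(\Z)$, not just $\gamma \in S_0(p)$; this is consistent with the paper's later use of $\theta$ as a map $H^1_c(\Gamma,\Vbar_{k-p-3})(1) \to H^1_c(\Gamma,\Vbar_{k-2})$ at level $\Gamma$ on both sides.
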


\begin{proof}
This is a straightforward computation.
\end{proof}

Lastly, we note that the kernel of $\alpha$ is given by precisely the symbols with positive $\mum$.

\begin{lemma}
\label{lemma:mum}
We have $\mum(\varphi) > 0 \iff \alpha(\vpb) = 0$.
\end{lemma}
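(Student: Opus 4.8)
The plan is to unwind both sides of the equivalence into statements about a single sequence of coefficients, after which nothing remains to prove. Fix $\varphi \in H^1_c(\Gamma, V_{k-2}(\O))$, write $g = k-2$, and for $D \in \Delta_0 = \Div^0(\P^1(\Q))$ let $c_D \in \O$ denote the coefficient of $Y^{g}$ in $\varphi(D) \in V_g(\O)$, so that $\varphi(D)|_{(X,Y)=(0,1)} = c_D$. By definition $\mum(\varphi) = \min_{D \in \Delta_0} \ord_p(c_D)$. (Throughout we assume, as in this subsection, that $\alpha$ is defined, i.e. $g > 0$ and $g \equiv 0 \pmod{p-1}$.)

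First I would translate the condition $\mum(\varphi) > 0$ into a divisibility statement. The values $\ord_p(c_D)$ all lie in the well-ordered set $\tfrac{1}{e}\Z_{\geq 0} \cup \{\infty\}$, where $e$ is the ramification index of $\O$ over $\Zp$; hence the minimum is attained, and it is strictly positive if and only if $\ord_p(c_D) \neq 0$ for every $D$, i.e. if and only if no $c_D$ is a unit of $\O$. Since each $c_D$ lies in $\O$, this is in turn equivalent to $\varpi \mid c_D$ for all $D \in \Delta_0$ (where $\varpi$ is a uniformizer of $\O$), equivalently to $\red{c_D} = 0$ in $\F$ for all $D$.

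Next I would identify this last condition with the vanishing of $\alpha(\vpb)$. By the definition of $\alpha$ as the map induced by $P(X,Y) \mapsto P(0,1)$ (Lemma \ref{lemma:alpha}), together with the definition of the reduction $\vpb$ of $\varphi$ modulo $\varpi$, the modular symbol $\alpha(\vpb) \in H^1_c(\Gamma_0, \F) \cong \Hom_{\Gamma_0}(\Div^0(\P^1(\Q)), \F)$ is given by $D \mapsto \vpb(D)|_{(X,Y)=(0,1)} = \red{c_D}$. Hence $\alpha(\vpb) = 0$ if and only if $\red{c_D} = 0$ for every $D \in \Delta_0$, and comparing with the previous step yields $\mum(\varphi) > 0 \iff \alpha(\vpb) = 0$, as desired.

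I do not expect any genuine obstacle: the statement is essentially a reformulation of the definition of $\mum$ in terms of the map $\alpha$. The only two places asking for a little care are the identification of $\alpha(\vpb)$ with the reduction modulo $\varpi$ of the $Y^{k-2}$-coefficient map of $\varphi$ (the same elementary computation underlying the first equality in Lemma \ref{lemma:alphastick}), and the remark that for an element of $\O$ the condition $\ord_p > 0$ is the same as divisibility by $\varpi$, which is what lets one pass freely between the valuation-theoretic definition of $\mum$ and the vanishing of a mod-$\varpi$ reduction.
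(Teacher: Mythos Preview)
Your proof is correct and takes essentially the same approach as the paper's: both reduce to the observation that $\alpha(\vpb)=0$ iff every coefficient of $Y^{k-2}$ in the values of $\varphi$ is divisible by $\varpi$, which is exactly the condition $\mum(\varphi)>0$. Your version is simply a more detailed unwinding of the same tautology.
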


\begin{proof}
We have $\alpha(\vpb)=0$ if and only if all of the coefficients of $Y^{k-2}$ occurring in values of $\varphi$ are divisible by $\varpi$, which is equivalent to $\mum(\varphi) > 0$.
\end{proof}

\subsection{Review of mod $p$ representations of $G_{\Qp}$}
\label{sec:modpreps}

For use in the following sections, we recall the possibilities for
the local residual representation of a modular form of small weight.

Let 
$\rhobar_p : G_{\Qp} \to \GL_2(\Fpbar)$ be an arbitrary continuous residual
representation of the absolute Galois group of $\Qp$.
If $\rhobar_p$ is irreducible, then $\rhobar_p \big|_{I_p}$ is tamely ramified; here $I_p$ denotes the inertia subgroup of $G_{\Qp}$.   Moreover, we have 
$$
\rhobar_p \big|_{I_p} \cong \omega_2^t \oplus \omega_2^{pt}
$$
where $\omega_2$ is a fundamental character of level 2 and $1 \leq t \leq p^2-1$ with $p+1 \nmid t$.  The integer $t$ uniquely determines $\rhobar_p \big|_{I_p}$ and we write $I(t)$ for this representation.  We note that $I(t) \cong I(pt)$.

If $\rhobar_p$ is reducible, then 
$$
\rhobar_p \big|_{I_p} \cong \pmat{\omega^a}{*}{0}{\omega^b}
$$
where $\omega$ is the mod $p$ cyclotomic character.

\begin{thm}
\label{thm:reps}
Let $f$ be an eigenform on $\Gamma$ of weight $k$ with $\rhobar_f$ irreducible.
\begin{enumerate}
\item
If $f$ is $p$-ordinary, then $\rhobarloc{f}$ is reducible and $\rhobarinert{f} \cong \pmat{\omega^{k-1}}{*}{0}{1}$.
\item
If $f$ is \nop{} and $2 \leq k \leq p+1$, then $\rhobarloc{f}$ is irreducible and $\rhobarinert{f} \cong I(k-1)$.
\end{enumerate}
\end{thm}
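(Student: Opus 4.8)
The plan is to invoke Fontaine--Lafaille theory, which applies precisely because the weights in question satisfy $k - 1 \leq p$ (equivalently $k \leq p+1$), so that the residual representation $\rhobarloc{f}$ is the mod $p$ reduction of a crystalline representation with Hodge--Tate weights $\{0, k-1\}$ in the Fontaine--Lafaille range $[0, p-1]$. The standard dictionary then pins down $\rhobarinert{f}$ in terms of $a_p(f)$ modulo $p$ and the weight. Concretely, I would first recall that the crystalline Dieudonn\'e module $D$ attached to $f$ at $p$ is a two-dimensional $\Qp$-vector space (or a free module over the coefficient ring) with filtration $\Fil^0 D = D \supseteq \Fil^1 D = \cdots = \Fil^{k-1} D \supsetneq \Fil^k D = 0$, and Frobenius $\varphi$ whose characteristic polynomial is $x^2 - a_p x + p^{k-1}$.

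For part (1), the $p$-ordinary hypothesis means exactly that $a_p$ is a $p$-adic unit, so after extending scalars the Frobenius has a unit eigenvalue $\alpha$ and a non-unit eigenvalue $\beta = p^{k-1}/\alpha$. The filtered module is then reducible: it contains a $\varphi$-stable line on which $\varphi$ acts by $\alpha$ (the ``unramified'' line, supporting $\Fil^0$ but not $\Fil^{k-1}$) and the quotient on which $\varphi$ acts by $\beta$ (supporting $\Fil^{k-1}$). Translating through Fontaine--Lafaille, $\rhobarloc{f}$ is reducible, the quotient line is the unramified character sending Frobenius to $\bar\alpha$, and restricting to inertia kills the unramified part, so the sub-line contributes $\omega^{k-1}$ on $I_p$ while the quotient contributes the trivial character; thus $\rhobarinert{f} \cong \pmat{\omega^{k-1}}{*}{0}{1}$ as claimed.

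For part (2), the \nop{} hypothesis means $a_p$ is not a unit, so $v_p(a_p) > 0$; combined with $v_p(p^{k-1}) = k-1$, the Newton polygon of $x^2 - a_p x + p^{k-1}$ forces both $\varphi$-slopes to be strictly between $0$ and $k-1$ (in fact the lower one is $\min(v_p(a_p), k-1) > 0$ when $k \geq 2$), so Frobenius admits no $\varphi$-stable line compatible with the filtration, and the filtered module is irreducible. Hence $\rhobarloc{f}$ is irreducible, so by the classification recalled in Section \ref{sec:modpreps} its restriction to inertia is $\omega_2^t \oplus \omega_2^{pt}$ for some $t$ with $1 \leq t \leq p^2 - 1$ and $p+1 \nmid t$. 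The Fontaine--Lafaille computation of irreducible crystalline reductions with Hodge--Tate weights $\{0, k-1\}$ and $0 \leq k-1 \leq p-1$ is classical (this is the ``obvious'' case of Serre weight considerations, going back to Fontaine--Lafaille and made explicit by e.g.\ Edixhoven): it yields $t \equiv k-1 \pmod{p^2-1}$, and since $1 \leq k-1 \leq p$ and one checks $p+1 \nmid (k-1)$ in this range, we get $\rhobarinert{f} \cong I(k-1)$.

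I expect the only genuine obstacle to be the bookkeeping in part (2): verifying that the Fontaine--Lafaille reduction of the specific filtered $\varphi$-module attached to a \nop{} form of weight $k \leq p+1$ is indeed $I(k-1)$ on inertia, including the edge case $k = p+1$ where $t = p$ and one must confirm $p+1 \nmid p$ and that $I(p)$ is the correct (non-twisted) normalization. This is entirely standard — the cleanest route is to cite the Fontaine--Lafaille classification (or Edixhoven's weight recipe / Breuil--Mézard-type tables) rather than to redo the semilinear-algebra computation — so the proof is essentially a matter of quoting the right result and checking the numerical range, with no hard analytic or cohomological input.
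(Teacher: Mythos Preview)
Your approach is correct and is essentially the content of the references the paper cites; the paper itself gives no proof beyond pointing to \cite[Remark 1.3]{BG} for a discussion of the literature.

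Two small caveats worth noting. First, part (1) of the theorem carries no weight restriction, so your Fontaine--Lafaille argument as written only covers $k \leq p$; the result for arbitrary weight in the ordinary case is due to Deligne (weight~2) and Wiles \cite{Wiles88} (general $k$), and the latter goes through Hida theory rather than filtered modules. Second, in part (2) you correctly flag the edge case $k = p+1$, but observe that the Hodge--Tate weight is then $k-1 = p$, which lies just \emph{outside} the classical Fontaine--Lafaille range $[0,p-1]$, contrary to what your opening sentence asserts; this boundary case is precisely what requires Edixhoven's separate argument rather than the straight Fontaine--Lafaille dictionary.
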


\begin{proof}
See \cite[Remark 1.3]{BG} for a thorough discussion of references for these results.
\end{proof}

The following lemma will be useful later in the paper.

\begin{lemma}
\label{lemma:nebenreps}
If $f$ is an eigenform in $S_2(\Gamma_1,\omega^j,\Qpbar)$ with $\rhobar_f$ irreducible and $0 \leq j \leq p-2$, then 
$$
\rhobarinert{f} \cong
\begin{cases}
I(j+1) &\text{~if~}\rhobarloc{f}\text{~is~irreducible,} 
\vspace{.1cm}\\
\pmat{\omega^{j+1}}{*}{0}{1}\text{~or~}
\pmat{\omega}{*}{0}{\omega^j}
&\text{~if~}\rhobarloc{f}\text{~is~reducible.} 
\end{cases}
$$
\end{lemma}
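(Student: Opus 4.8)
The first observation is that the lemma is vacuous unless $j$ is even: a nonzero eigenform of weight $2$ has even nebentypus (its character $\chi$ satisfies $\chi(-1)=(-1)^2=1$, whereas $\omega^j(-1)=(-1)^j$), so $S_2(\Gamma_1,\omega^j,\Qpbar)=0$ when $j$ is odd. Thus I would assume throughout that $j$ is even; note that then $1\le j+1\le p-1$, so $\pmat{\omega^{j+1}}{*}{0}{1}$ and $I(j+1)$ really are of the shapes recorded in \S\ref{sec:modpreps}.

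Consider first $j=0$, so $f\in S_2(\Gamma_0(Np))$. If $f$ is $p$-old it is a $p$-stabilization of a newform $g$ of weight $2$ and level prime to $p$, with $\rhobar_f\cong\rhobar_g$; since $2\le 2\le p+1$, Theorem \ref{thm:reps} applied to $g$ gives that $\rhobarloc{f}$ is reducible with $\rhobarinert{f}\cong\pmat{\omega}{*}{0}{1}$ when $g$ is ordinary, and irreducible with $\rhobarinert{f}\cong I(1)$ when $g$ is non-ordinary. If instead $f$ is $p$-new, then its automorphic representation at $p$ is an unramified twist of the Steinberg representation, so $\rho_f|_{G_{\Qp}}$ is an unramified twist of the special representation; hence $\rhobarloc{f}$ is reducible with $\rhobarinert{f}\cong\pmat{\omega}{*}{0}{1}$. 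In every subcase the claim holds (for $j=0$ the two displayed options coincide).

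The substantial case is $j$ even and nonzero, where the plan is to determine $\rho_f|_{G_{\Qp}}$ via local--global compatibility. Since $\omega^j$ has conductor exactly $p$, the newform underlying $f$ has $p$-conductor exactly $p$; as $f$ also has level $\Gamma_1(p)$ at $p$, its local component $\pi_p$ has conductor exponent $1$ and ramified central character. The only such $\pi_p$ is a ramified principal series $\pi(\mu_1,\mu_2)$ with exactly one constituent ramified --- say $\mu_1$ unramified, $\mu_2$ of conductor $p$ --- because supercuspidal representations and ramified twists of the Steinberg representation have conductor exponent at least $2$. Local--global compatibility then yields $\rho_f|_{G_{\Qp}}\cong\eta_1\oplus\eta_2$ with $\eta_1$ unramified, so $\rhobarloc{f}$ is reducible; on inertia $\bar\eta_1|_{I_p}=1$, while, since the determinant of $\rho_f$ is $\omega^j$ times the $p$-adic cyclotomic character (the latter reducing to $\omega$ on $I_p$), we get $\bar\eta_2|_{I_p}=\omega^j\cdot\omega=\omega^{j+1}$. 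Hence $\rhobarinert{f}\cong\pmat{\omega^{j+1}}{*}{0}{1}$ (in fact split), and the irreducible alternative does not occur.

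The main obstacle is the bookkeeping in this last step: extracting $\rho_f|_{G_{\Qp}}$ from $\pi_p$ with the correct normalization, so that the cyclotomic factor of the determinant lands on the ramified constituent $\eta_2$ --- this is exactly what forces the inertial exponent to be $j+1$ rather than $j$ or $j+2$. Ruling out the supercuspidal and ramified-Steinberg shapes by a conductor count is routine, as is the $j=0$ analysis once one invokes Theorem \ref{thm:reps}. One could alternatively phrase the entire argument in the style of \S\ref{sec:modpreps}, using Theorem \ref{thm:reps} only for the crystalline subcase and the classification of conductor-$p$ local types for the rest; the content is the same.
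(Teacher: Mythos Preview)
Your argument has a genuine gap in the case $j \neq 0$. Local--global compatibility identifies the Weil--Deligne representation attached to $\rho_f|_{G_{\Qp}}$ with that of $\pi_p$; it does \emph{not} hand you $\rho_f|_{G_{\Qp}}$ itself. Knowing that $\pi_p \cong \pi(\mu_1,\mu_2)$ with $\mu_1$ unramified and $\mu_2$ of conductor $p$ tells you that $D_{\mathrm{pst}}(\rho_f|_{G_{\Qp}})$ has a reducible Frobenius action, but the $p$-adic Galois representation is determined by this together with the Hodge filtration, and that filtration is not visible from $\pi_p$ alone. In particular, your assertion that $\rho_f|_{G_{\Qp}} \cong \eta_1 \oplus \eta_2$ with $\eta_1$ unramified is unjustified, and in fact generally false.

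Concretely, two things go wrong. First, when $\rho_f|_{G_{\Qp}}$ \emph{is} reducible, the Hodge--Tate weight $1$ can sit on either the $\mu_1$-line or the $\mu_2$-line. If it sits on $\mu_2$ you get inertial characters $\{1,\omega^{j+1}\}$ as you wrote; if it sits on $\mu_1$ you get $\{\omega,\omega^j\}$ instead. This is exactly the second reducible option $\psmallmat{\omega}{*}{0}{\omega^j}$ in the lemma, which your argument misses entirely. Second, $\rho_f|_{G_{\Qp}}$ can be \emph{irreducible} even though its Weil--Deligne representation is a sum of characters: this happens when the Hodge line is in general position and neither Frobenius eigenline is weakly admissible on its own. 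This case really occurs --- take any non-ordinary eigenform $g \in S_{j+2}(\Gamma,\Qpbar)$ and use Ash--Stevens to produce a congruent $f \in S_2(\Gamma_1,\omega^j,\Qpbar)$; then $\rhobar_f|_{G_{\Qp}} \cong \rhobar_g|_{G_{\Qp}} \cong I(j+1)$ is irreducible. So your conclusion that ``the irreducible alternative does not occur'' for $j \neq 0$ is false.

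The paper avoids all of this $p$-adic Hodge theory at $p$ by instead passing, via \cite[Theorem~3.4(a) and Proposition~2.5]{AS}, from the mod $p$ eigensystem of $f$ in $H^1_c(\Gamma_1,\F)^{(\omega^j)}$ to a congruent eigenform $g$ of weight $j+2$ or $p+1-j$ on $\Gamma$ (level prime to $p$), and then invoking Theorem~\ref{thm:reps} for $g$. The two possible target weights, together with the ordinary/non-ordinary dichotomy for $g$, are precisely what produce the three shapes listed in the lemma.
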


\begin{proof}
Consider the modular symbol $\vpb_f \in H^1_c(\Gamma_1,\F)^{(\omega^j)}$.  By \cite[Theorem 3.4(a)]{AS}\footnote{We note that in \cite{AS} the cohomology groups considered are not taken with compact support.  However, the difference between $H^1$ and $H^1_c$ is Eisenstein, and since we are assuming our forms have globally irreducible Galois representations, this difference does not affect our arguments.}, the system of eigenvalues of $\vpb_f$ occurs either in $
H^1_c(\Gamma,\Vbar_j)$ or $H^1_c(\Gamma,\Vbar_{p-1-j})(j)$.
By \cite[Proposition 2.5]{AS}, there then exists either an eigenform $g \in S_{j+2}(\Gamma,\Qpbar)$ with $\rhobar_f \cong \rhobar_g$ or an eigenform $g \in S_{p+1-j}(\Gamma,\Qpbar)$ with $\rhobar_f \cong \rhobar_g \otimes \omega^j$.

In the first case, by Theorem \ref{thm:reps}, $\rhobarinert{f}$ is equal to either $I(j+1)$ or $\psmallmat{\omega^{j+1}}{*}{0}{1}$, and, in the second case, $\rhobarinert{g}$ is equal to either $I(p-j)$ or $\psmallmat{\omega^{p-j}}{*}{0}{1}$.  In the latter case,
$$
\rhobarinert{f} \cong I(p-j) \otimes \omega^j \cong I(p-j+j(p+1)) \cong
I(pj+p) \cong I(j+1)
$$
or
$$
\rhobarinert{f} \cong \psmallmat{\omega^{p-j}}{*}{0}{1} \otimes \omega^j = \psmallmat{\omega}{*}{0}{\omega^j}.
$$
\end{proof}

\section{The non-ordinary case for medium weights}
\label{sec:medweight}

In this section, we will prove a theorem about the Iwasawa invariants of Mazur--Tate elements in weights $k$ such that $2 < k < p^2+1$.  For $f \in S_k(\Gamma,\Qpbar)$ a normalized eigenform, recall that $\O := \O_f$ denotes the ring of integers of the finite extension of $\Qp$ generated by the Fourier coefficients of $f$, and $\F := \F_f$ denotes the residue field of $\O$.

\subsection{Statement of theorem}

\begin{thm}
\label{thm:medweight}
Let $f$ be an eigenform in $S_k(\Gamma,\Qpbar)$ which is \nop, and such that
\begin{enumerate}
\item \label{hyp:irred}
$\rhobar_f$ is irreducible,
\vspace{.1cm}
\item \label{hyp:weight}
$2 < k < p^2+1$,
\vspace{.1cm}
\item \label{hyp:repn}
$k(\rhobar_f) = 2$ and $\rhobarloc{f}$ is not decomposable.
\end{enumerate}
Then
\begin{enumerate}
\item $\mm{f}{+}=\mm{f}{-}=0$, and
\item there exists an eigenform $g \in S_2(\Gamma)$ with $\red{a_\ell(f)} = \red{a_\ell(g)}$ for all primes $\ell \neq p$, and a choice of cohomological periods $\Omega_f, \Omega_g$ such that 
$$
\red{\ttnf} = \nun{n}{n-1}\bigl(\red{ \tt{n-1}{g}}\bigr) \text{~in~} \F[\G_n].
$$
\end{enumerate}
\end{thm}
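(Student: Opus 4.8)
The heart of the argument is the single assertion $(\star)$: \emph{the Hecke eigensystem of $f$ does not occur in $\ker\alpha\subseteq H^1_c(\Gamma,\Vbar_{k-2})$}. First note that $\alpha$ is available here: since $\rhobar_f$ has trivial nebentype and $k(\rhobar_f)=2$, and $\det\rhobar|_{I_p}=\omega^{k(\rhobar)-1}|_{I_p}$ for any $\rhobar$, we get $\omega^{k-1}|_{I_p}=\omega|_{I_p}$, so $p-1\mid k-2$ (hence $k\ge p+1$) and Lemma~\ref{lemma:alpha} applies. Granting $(\star)$, conclusion (1) is immediate: $\alpha$ is equivariant for $\iota=\psmallmat{-1}{0}{0}{1}\in S_0(p)$, so $\alpha(\vpb_f^+)$ and $\alpha(\vpb_f^-)$ are the $\pm$-parts of $\alpha(\vpb_f)$; each $\vpb_f^\pm$ is nonzero (the periods are cohomological) and carries the eigensystem of $f$, so $(\star)$ gives $\alpha(\vpb_f^\pm)\ne 0$, whence $\mm{f}{+}=\mm{f}{-}=0$ by Lemma~\ref{lemma:mum} and the finiteness of $\mm{f}{\pm}$.

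\textbf{From $(\star)$ to conclusion (2).} Assume $(\star)$, so $\alpha(\vpb_f)\ne 0$. Since $\alpha$ intertwines $T_p$ on the source with $U_p$ on the target and $f$ is \nop{} (so $\red{a_p(f)}=0$), the symbol $\alpha(\vpb_f)\in H^1_c(\Gamma_0,\F)$ is a nonzero eigensymbol for the full Hecke algebra with $U_p$-eigenvalue $0$ and prime-to-$p$ eigenvalues $\red{a_\ell(f)}$. By \cite[Proposition 2.5]{AS} this eigensystem lifts to a weight-$2$ eigenform $h$ on $\Gamma_0$; a $p$-new weight-$2$ form has $U_p$-eigenvalue $\pm 1\not\equiv 0\pmod\varpi$, so $h$ is $p$-old, coming from an eigenform $g\in S_2(\Gamma)$ with $\red{a_\ell(g)}=\red{a_\ell(f)}$ for $\ell\ne p$. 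A direct computation (using that $U_p\circ B_{p,p}=p^{k-1}B_{p,1}$, which vanishes mod $\varpi$ in weight $2$) shows $\vpb_g\Dp$ is an eigensymbol of level $\Gamma_0$ with $U_p$-eigenvalue $0$ and the same prime-to-$p$ eigenvalues, i.e.\ the same Hecke system as $\alpha(\vpb_f)$. Both symbols are nonzero and lie in the $p$-old part of $H^1_c(\Gamma_0,\F)^\pm$; by Ihara's lemma (to discard the Eisenstein kernel of $B_p$) together with mod $p$ multiplicity one at the prime-to-$p$ level $\Gamma$ (valid as $\rhobar_f$ is irreducible), this system occurs with multiplicity one there, so $\alpha(\vpb_f^\pm)$ and $\vpb_g^\pm\Dp$ are $\F^\times$-proportional. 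As $\Omega_g^\pm$ are each well-defined only up to a $p$-adic unit, after rescaling we may arrange $\alpha(\vpb_f)=\vpb_g\Dp$. Then Lemma~\ref{lemma:alphastick} gives $\red{\ttnf}=\ttn{\vpb_g\Dp}$, and the $\vartheta$-version of Lemma~\ref{lemma:degen} with coefficient degree $0$ gives $\ttn{\vpb_g\Dp}=\nun{n}{n-1}(\red{\tt{n-1}{g}})$, which is (2).

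\textbf{Proof of $(\star)$.} Suppose the eigensystem of $f$ lay in $\ker\alpha$. By the Ash--Stevens analysis of $\Vbar_{k-2}$ as an $S_0(p)$-module, valid since $k-2<p^2-1$ (\cite[Theorem 3.4c]{AS}; when $k=p+1$ the relevant submodule is already $0$ and $(\star)$ is automatic), this eigensystem then occurs in $H^1_c(\Gamma,\Vbar_{k-p-3})(1)$, so there is an eigenform $h_1$ of weight $k-p-1$ with $\rhobar_{h_1}\otimes\omega\cong\rhobar_f$. Iterate: as long as the current form $h_i$ has weight $>p+1$ and its reduced symbol lies in the corresponding $\ker\alpha$, descend once more, producing $h_{i+1}$ of weight $k-(i+1)(p+1)$ with $\rhobar_f\cong\rhobar_{h_{i+1}}\otimes\omega^{i+1}$; otherwise stop. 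Because $k<p^2+1$ (and $k\equiv 2\pmod{p-1}$), the process stops after $m\le p-2$ steps, so $1\le m\le p-2$ and $\omega^m\ne 1$. At the terminal stage either $h_m$ has weight $w\le p+1$ and Theorem~\ref{thm:reps} pins down $\rhobarloc{h_m}$ (it is $I(w-1)$ in the irreducible case and has $\rhobarinert{h_m}\cong\psmallmat{\omega^{w-1}}{*}{0}{1}$ in the reducible case), or $h_m$ survives $\alpha$ and is congruent to a weight-$2$ eigenform, so $\rhobarloc{h_m}$ is one of the shapes in Lemma~\ref{lemma:nebenreps}. In every case $\rhobarloc{f}\cong\rhobarloc{h_m}\otimes\omega^m$ with $1\le m\le p-2$, and a run through this short explicit list shows that no $\omega^m$-twist of such a representation is simultaneously indecomposable and of Serre weight $2$, contradicting hypothesis~(\ref{hyp:repn}); the decomposable Serre-weight-$2$ possibilities that arise are precisely those removed by ``$\rhobarloc{f}$ not decomposable''.

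\textbf{Main obstacle.} The delicate step is the descent in the proof of $(\star)$: one must control the $S_0(p)$-module structure of $\Vbar_{k-2}$ precisely enough to know that a single Ash--Stevens step lowers the weight by exactly $p+1$ and twists by exactly $\omega$, and then carry out the local Galois bookkeeping — tracking how the weight and the accumulated $\omega$-power interact through Fontaine--Lafaille theory — to verify case by case that an $\omega^m$-twist with $1\le m\le p-2$ of any of the finitely many terminal local shapes is never indecomposable of Serre weight $2$. This is exactly where all three hypotheses are used, and, as remarked in the introduction, the bound $k<p^2+1$ is sharp for this method.
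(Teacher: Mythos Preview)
Your proposal is correct and follows the same strategy as the paper: your assertion $(\star)$ is exactly Lemma~\ref{lemma:inj}, proved there by the same Ash--Stevens descent through the image of $\theta$ (packaged as an explicit inductive computation of the sets $\L^{\irr}_\theta(k)$ and $\L^{\rd}_\theta(k)$ of admissible local representations), and the passage from $(\star)$ to parts (1) and (2) via mod $p$ multiplicity one and Lemmas~\ref{lemma:alphastick} and \ref{lemma:degen} matches the paper's argument almost verbatim. The one loose point---that at intermediate weights $w\not\equiv 2\pmod{p-1}$ the map you call ``the corresponding $\alpha$'' is not literally that of Lemma~\ref{lemma:alpha} but the analogous evaluation map into a twisted module---is harmless, since the dichotomy you actually need (in the image of $\theta$ versus congruent to a weight-$2$ nebentype form) is exactly what \cite[Theorem~3.4]{AS} together with Lemma~\ref{lemma:nebenreps} provides at every weight.
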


\begin{remark}
\label{rmk:medweight}
\noindent
\begin{enumerate}
\item
\label{part:gord}
In the notation of the above theorem, we have that $g$ is ordinary at $p$ if and only if $\rhobarloc{f}$ is reducible.  Indeed, $\rhobar_f \cong \rhobar_g$, and since $g$ has weight 2, Theorem \ref{thm:reps} implies that  $g$ is ordinary at $p$ if and only if $\rhobarloc{g}$ is reducible.

\vspace{.1cm}

\item
Hypothesis (\ref{hyp:repn}) is equivalent to assuming that $\rhobarinert{f}$ is isomorphic to either $I(1)$ or $\psmallmat{\omega}{*}{0}{1} $ with $*$ neither 0 nor tr\`es-ramifi\'ee. 

\vspace{.1cm}

\item
Theorem \ref{thm:medweight} can fail for weights as low as $p^2+1$.  For example, there is a newform $f \in S_{10}(\Gamma_0(17))$ which is non-ordinary at $p=3$ and congruent to the unique normalized newform $g \in S_2(\Gamma_0(17))$. The form $g$ is non-ordinary at $3$, and thus $\rhobarloc{f} \cong \rhobarloc{g}$ is irreducible by Theorem \ref{thm:reps}.  In particular, hypotheses (\ref{hyp:irred}) and (\ref{hyp:repn}) are satisfied;  however, for this form, one computes that $\mm{f}{+} =1$. (We note that determining $\mm{f}{\pm}$ for a particular form $f$ is a finite computation.)

Possibly such counter-examples are common for following reason: let $g \in S_2(\Gamma,\Qpbar)$ denote any eigenform which satisfies hypotheses (\ref{hyp:irred}) and (\ref{hyp:repn}).  Consider $\theta^{p-1}(\vpb_{g})$
which is an eigensymbol in $H^1_c(\Gamma,\Vbar_{p^2-1})$.  By \cite[Proposition 2.5]{AS}, there exists an eigenform $f \in S_{p^2+1}(\Gamma,\Qpbar)$ whose system of Hecke-eigenvalues reduces to those of $\theta^{p-1}(\vpb_{g})$.  Thus, by Fermat's little theorem,
$$
\red{a_\ell(f)} = \ell^{p-1} \red{a_\ell(g)} = \red{a_\ell(g)}
$$ 
for all $\ell \neq p$.  Moreover, $\red{a_p(f)} = \red{a_p(g)}$ since both are 0.  Thus, $\vpb_f$ and $\theta^{p-1}(\vpb_{g})$ have the same system of Hecke-eigenvalues for the full Hecke-algebra.  A strong enough mod $p$ multiplicity one theorem (which is not currently known, and may not be always be true) would then imply equality of these two symbols up to a constant.   Thus, $\vpb_f$ is in the image of $\theta$, and  by Lemma \ref{lemma:mum}, we would then have that $\mm{f}{\pm} > 0$.
\vspace{.2cm}

\item The condition that $\rhobarloc{f}$ is not decomposable is  necessary.  For example, there is a newform $f \in S_{10}(\Gamma_0(21))$ which is non-ordinary at $5$ and congruent  to the unique normalized newform $g \in S_2(\Gamma_0(21))$.  In this example, $\rhobar_f$ is irreducible, $\rhobarloc{f}$ is decomposable, and $\mm{f}{\pm}>0$.

Possibly such counter-examples are again common for a similar reason as in the previous remark.  Take $g \in S_2(\Gamma,\Qpbar)$ with $\rhobar_g$ irreducible and $\rhobarloc{g}$ decomposable.  By Gross' tameness criterion \cite{Gross}, there exists a form $h \in S_{p-1}(\Gamma,\Qpbar)$ such that $\rhobar_h \otimes \omega \cong \rhobar_g$.  The associated eigensymbol $\vpb_h$ is in $H^1_c(\Gamma,V_{p-3}(\Zpbar))$, and thus $\theta(\vpb_h)$ is in $H^1_c(\Gamma,V_{2p-2}(\Zpbar))$.  By \cite[Proposition 2.5]{AS}, there exists $f \in S_{2p}(\Gamma,\Qpbar)$ whose system of Hecke-eigenvalues reduces to those of $\theta(\vpb_h)$.   In particular, $\rhobar_f \cong \rhobar_h \otimes \omega \cong \rhobar_g$, and thus $f$ satisfies hypotheses (\ref{hyp:irred}), $k(\rhobar_f)=2$, and $\rhobarloc{f}$ decomposable.

Note that $\theta(\vpb_h)$ and $\vpb_f$ have the same system of Hecke-eigenvalues.  Thus, as before, a strong enough mod $p$ multiplicity one result would give equality of these symbols up to a constant.  In particular, we would obtain that $\vpb_f$ is in the image of $\theta$, and   by Lemma \ref{lemma:mum},  $\mm{f}{\pm} > 0$.

\vspace{.1cm}

\item 
The question of determining the structure of $\rhobarloc{f}$ remains a difficult one.  Partial results exist when the weight $k$ is not too large.  For instance, if $k=p+1$ and $f$ is non-ordinary, then by a result of Edixhoven \cite{Edixhoven}, $\rhobarloc{f}$ is automatically irreducible and isomorphic to $I(1)$.  More recently, Berger \cite{Berger} showed that  if $k=2p$, then $\rhobarloc{f}$ is irreducible if and only if $\ord_p(a_p) \neq 1$.  Moreover,
$$
\rhobarinert{f} = 
\begin{cases}
I(1) & \text{~if~}0 < \ord_p(a_p) < 1 \\
I(2p-1) & \text{~if~}\ord_p(a_p) > 1 \\
\psmallmat{\omega}{*}{0}{1} 
\text{~or~}
\psmallmat{1}{*}{0}{\omega}   & \text{~if~}\ord_p(a_p) = 1 
\end{cases}
$$
Unfortunately, even in this small weight, we do not know how to determine which representation occurs in the last of these three cases solely from the value of $\ord_p(a_p)$, and, in particular, we cannot determine the value of $k(\rhobar_f)$.
\end{enumerate}
\end{remark}

In the following corollary we maintain the hypotheses and notation of Theorem \ref{thm:medweight}.

\begin{cor} 
\label{cor:invmedweight}
If $\rhobarloc{f}$ is reducible (resp.\ irreducible), then
\begin{enumerate}
\item 
\label{item:mumw}
$\mu(\tnfi) = 0$ for $n \gg 0 \iff \mu(g,\omega^i) = 0$ (resp.\ $\mu^\pm(g,\omega^i)=0$);
\vspace{.2cm}
\item if the equivalent conditions of (\ref{item:mumw}) hold and $n \gg 0$, then 
$$
\lam(\tnfi) =   
p^n - p^{n-1} +
\begin{cases}
 \lam(g,\omega^i) & \text{~if~}\rhobarloc{f} \text{~is~reducible}, \\
q_{n-1} + \lam^{\text{-}\ve_n}(g,\omega^i) & \text{~if~}\rhobarloc{f} \text{~is~irreducible}. 
\end{cases}
$$ 
\end{enumerate}
\end{cor}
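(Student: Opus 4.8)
The plan is to deduce the corollary from Theorem~\ref{thm:medweight}(2) together with the finite-level results of Sections~3--4. Since Theorem~\ref{thm:medweight}(2) is phrased in terms of the elements $\ttnf$, the first step is to pass to the $\omega^i$-components. Writing $\G_{n+1}\cong G_n\times(\Z/p\Z)^\times$ and $\G_n\cong G_{n-1}\times(\Z/p\Z)^\times$, the restriction $\G_{n+1}\to\G_n$ is the identity on the $(\Z/p\Z)^\times$-factor, so on group algebras $\nun{n+1}{n}=\nun{n}{n-1}\otimes\mathrm{id}$ and therefore commutes with the idempotent defining $\omega^i$; applying $\omega^i$ to the level-$(n+1)$ form of Theorem~\ref{thm:medweight}(2) and using $\theta_{n,i}(\varphi)=\omega^i(\vartheta_{n+1}(\varphi))$ gives
$$
\red{\tnfi}=\nun{n}{n-1}\bigl(\red{\ti{n-1}{g}}\bigr)\qquad\text{in }\F[G_n]
$$
for all $n$. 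As $\nun{n}{n-1}$ preserves $\mu$ and sends nonzero elements to nonzero elements (Lemma~\ref{lemma:nuninv}(1)), this gives $\mu(\tnfi)=0\iff\mu(\ti{n-1}{g})=0$; and when these hold, Lemma~\ref{lemma:nuninv}(2) gives $\lam(\tnfi)=p^n-p^{n-1}+\lam(\ti{n-1}{g})$. Both parts of the corollary are thereby reduced to the behaviour of $\mu(\ti{n-1}{g})$ and $\lam(\ti{n-1}{g})$ for $n\gg0$.

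By Remark~\ref{rmk:medweight}(\ref{part:gord}), $g$ is \nop{} exactly when $\rhobarloc{f}$ is irreducible. In that case Theorem~\ref{thm:wt2}(1) evaluates $\mu(\ti{n-1}{g})$ for $n\gg0$ as $\mu^+(g,\omega^i)$ or $\mu^-(g,\omega^i)$ according to the parity of $n-1$, whence $\mu(\tnfi)=0$ for $n\gg0$ if and only if $\mu^+(g,\omega^i)=\mu^-(g,\omega^i)=0$; granting this, Theorem~\ref{thm:wt2}(2) applies (its hypothesis $\mu^+=\mu^-$ holds) and gives $\lam(\ti{n-1}{g})=q_{n-1}+\lam^{\ve_{n-1}}(g,\omega^i)$, so the display together with $\ve_{n-1}=-\ve_n$ yields exactly $\lam(\tnfi)=p^n-p^{n-1}+q_{n-1}+\lam^{\text{-}\ve_n}(g,\omega^i)$. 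When $\rhobarloc{f}$ is reducible, $g$ is $p$-ordinary with $\rhobar_g\cong\rhobar_f$ globally irreducible, and Proposition~\ref{prop:invord} applied to $g$ says that $\mu(g,\omega^i)=0$ implies $\mu(\ti{n-1}{g})=0$ and $\lam(\ti{n-1}{g})=\lam(g,\omega^i)$ for $n\gg0$, which gives the ``$\Leftarrow$'' direction of part~(\ref{item:mumw}) together with the formula $\lam(\tnfi)=p^n-p^{n-1}+\lam(g,\omega^i)$.

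The single point not immediately supplied by the results above is the ``$\Rightarrow$'' direction of part~(\ref{item:mumw}) in the reducible case: that $\mu(\ti{n-1}{g})=0$ for $n\gg0$ forces $\mu(g,\omega^i)=\mu(L_p(g,\omega^i))=0$, i.e.\ the converse of Proposition~\ref{prop:invord} for $g$. This is exactly the statement that the ``Eisenstein-type'' degeneration of the $\Gamma_0(11)$ example at $p=5$ --- where $\mu(\tn{g})=0$ for every $n$ yet $\mu(L_p(g))>0$ --- cannot occur once $\rhobar_g$ is globally irreducible. I would attack it by contradiction: if $\mu(L_p(g,\omega^i))>0$ --- equivalently, $\red{\ti{n}{g_\alpha}}=0$ for $n\gg0$ --- while $\mu(\ti{n}{g})=0$ for $n\gg0$, then reducing the $p$-stabilization identity $\ti{n}{g_\alpha}=\ti{n}{g}-\tfrac1\alpha\,\nun{n}{n-1}(\ti{n-1}{g})$ of Lemma~\ref{lemma:periods} modulo $\varpi$ forces $\red{\ti{n}{g}}=\tfrac1{\red\alpha}\,\nun{n}{n-1}(\red{\ti{n-1}{g}})$ for all large $n$, whence $\lam(\ti{n}{g})$ grows like $p^n$ (by Lemma~\ref{lemma:nuninv}(2)). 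Since $\red{\varphi_{g_\alpha}}\neq0$ by Lemma~\ref{lemma:periods} (via Ihara's lemma), and the finite-level $\mu$-invariants of the $p$-ordinary eigenform $g_\alpha$ stabilize to $\mu(L_p(g,\omega^i))$, this should produce the contradiction. Making this last comparison precise --- isolating exactly where the global irreducibility of $\rhobar_g$ enters --- is the main obstacle; everything else is bookkeeping with Lemmas~\ref{lemma:nuninv} and \ref{lemma:periods} and the already established Theorems~\ref{thm:medweight} and \ref{thm:wt2}.
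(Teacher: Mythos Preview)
Your reduction is exactly the paper's: pass from the congruence of Theorem~\ref{thm:medweight}(2) to the $\omega^i$-components, then apply Lemma~\ref{lemma:nuninv} to transfer the question to the Iwasawa invariants of $\ti{n-1}{g}$, and finally invoke the weight~$2$ results. The paper's proof is a one-line citation of Theorem~\ref{thm:medweight}, Theorem~\ref{thm:wt2}, and Lemma~\ref{lemma:nuninv}; you have simply unpacked it, and in fact more carefully, since you notice that the ordinary case needs Proposition~\ref{prop:invord} rather than Theorem~\ref{thm:wt2}.

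The point you flag---that Proposition~\ref{prop:invord} only supplies the implication $\mu(g,\omega^i)=0\Rightarrow\mu(\ti{n}{g})=0$ for $n\gg0$, while the corollary asserts an equivalence---is a genuine observation, and the paper's proof does not address it either. Your proposed contradiction argument is the natural one, but as you say, the recursion $\red{\ti{n}{g}}=\red{\alpha}^{-1}\nun{n}{n-1}(\red{\ti{n-1}{g}})$ is \emph{exactly} the reduction of $\ti{n}{g_\alpha}\equiv 0$, so it cannot by itself yield a contradiction; the three-term relation gives back only $\red{a_p}=\red{\alpha}$, which is automatic. The missing ingredient would have to come from the modular-symbol level (Ihara's lemma via Lemma~\ref{lemma:periods}) rather than from the Mazur--Tate elements alone, and making that step precise is indeed the crux. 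In short: your proof matches the paper's, and the lacuna you isolate in the reducible case is present in the paper's argument as well, not a defect in your reconstruction.
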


\begin{proof}
We first note that $g$ is ordinary if and only if $\rhobarloc{f}$ is reducible (see Remark \ref{rmk:medweight}.\ref{part:gord}).
The corollary then follows from Theorem \ref{thm:medweight}, Theorem \ref{thm:wt2}, and Lemma \ref{lemma:nuninv}.
\end{proof}
\subsection{A key lemma}

The main tool in proving Theorem \ref{thm:medweight} is the map $\alpha$ of section \ref{sec:alpha}.  If $\alpha(\vpb_f) \neq 0$, then one can produce a congruence to a weight 2 form, and begin to compare their Mazur--Tate elements.  In this section, we establish the non-vanishing of $\alpha(\vpb_f)$ for the forms $f$ we are considering.

\begin{lemma}
\label{lemma:inj}
If $f$ satisfies the hypotheses of Theorem \ref{thm:medweight}, then
$\alpha(\vpb_f^\pm) \neq 0$.
\end{lemma}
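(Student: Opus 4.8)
The plan is to show $\alpha(\vpb_f^\pm) \neq 0$ by a downward induction on the weight $k$, using Lemma \ref{lemma:mum} to reduce it to the statement $\mum(\varphi_f^\pm) = 0$, and then using the Ash--Stevens machinery (\cite[Theorem 3.4]{AS}, \cite[Proposition 2.5]{AS}) together with the classification of local residual representations in Theorem \ref{thm:reps} to derive a contradiction with hypothesis (\ref{hyp:repn}), namely $k(\rhobar_f)=2$ and $\rhobarloc{f}$ indecomposable. The base case is $k = p+1$: here $V_{p-1}$ is in the Fontaine--Lafaille range in the relevant sense, and if $\alpha(\vpb_f^\pm) = 0$ then $\vpb_f$ takes values only in the span of $X^{k-2}$; combined with Lemma \ref{lemma:FE} (the functional equation for modular symbols) this would force $\vpb_f$ to also have vanishing $Y^{k-2}$-coefficients, hence $\vpb_f = 0$, a contradiction. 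For the inductive step, I would argue as in the sketch: if $\alpha(\vpb_f) = 0$, then by \cite[Theorem 3.4c]{AS} the system of Hecke-eigenvalues of $\vpb_f$ appears in $H^1_c(\Gamma, \Vbar_{k-2-(p+1)})(1)$, so by \cite[Proposition 2.5]{AS} there is an eigenform $h$ of weight $k - (p+1) + 2 = k - p + 1$ with $\rhobar_h \otimes \omega \cong \rhobar_f$.

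First I would set up the induction carefully: the statement to be proved by downward induction on $k$ (over all levels $\Gamma = \Gamma_0(N)$, $p \nmid N$, and all nebentypus twists that arise) is that any eigenform $h$ with $\rhobar_h$ irreducible, $k(\rhobar_h) = 2$, $\rhobarloc{h}$ indecomposable, and $2 < \mathrm{wt}(h)$ in the medium-weight range has $\mum(\varphi_h^\pm) = 0$. When $\mathrm{wt}(h) - p + 1 \leq p+1$, i.e.\ $\mathrm{wt}(h) \leq 2p$, the form $h$ produced above lies in the Fontaine--Lafaille range, so Theorem \ref{thm:reps} gives $\rhobarinert{h}$ explicitly ($I(\mathrm{wt}(h)-p)$ in the non-ordinary case, or $\psmallmat{\omega^{\mathrm{wt}(h)-p}}{*}{0}{1}$ in the ordinary case), and then $\rhobar_f \cong \rhobar_h \otimes \omega$ has $\rhobarinert{f}$ equal to a twist of this. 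One then checks that none of these possibilities is compatible with $k(\rhobar_f) = 2$ and $\rhobarloc{f}$ indecomposable — this is the arithmetic heart of the argument and exactly mirrors the $k = 2p$ and $k = 3p-1$ cases in the sketch. When $\mathrm{wt}(h) - p + 1 > p+1$, the form $h$ is itself in the medium-weight range but of strictly smaller weight, so I would apply the inductive hypothesis: either $\alpha(\vpb_h) \neq 0$, which by the weight-2 analysis (via \cite[Proposition 2.5a]{AS} and mod $p$ multiplicity one, as in the $k = p+1$ base case of Theorem \ref{thm:medweight}) pins down $\rhobarloc{h}$ and hence $\rhobarloc{f}$ to a form incompatible with the hypotheses; or $\alpha(\vpb_h) = 0$, and we recurse once more, lowering the weight by $p+1$ again. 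Since the weight strictly decreases at each recursion and stays positive, the process terminates, always landing either in the Fontaine--Lafaille range or in a situation where the weight-2 congruence is visible.

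The $\pm$-equivariance needs a remark: the map $\alpha$ commutes with the involution $\iota = \psmallmat{-1}{0}{0}{1}$ (it only touches the $Y^{k-2}$-coefficient, and $\iota$ acts on $V_g$ compatibly with its action on $\Fp$ via $Y \mapsto Y$), so $\alpha$ respects the $\pm$-decomposition and it suffices to run the argument for $\vpb_f$ itself; if $\alpha(\vpb_f^+) = 0$ and $\alpha(\vpb_f^-) = 0$ then $\alpha(\vpb_f) = 0$. Also, throughout, the Ash--Stevens results are stated for ordinary cohomology rather than compactly-supported cohomology, but since the difference is Eisenstein and $\rhobar_f$ is irreducible, this does not affect the argument (as noted in the footnote to Lemma \ref{lemma:nebenreps}).

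The main obstacle, and the step I expect to require the most care, is the case analysis on local Galois representations in the inductive step — specifically, showing that for each weight $\mathrm{wt}(h)$ in the allowed range, the possible shapes of $\rhobarinert{h}$ coming from Theorem \ref{thm:reps} (or from the inductive hypothesis applied to $h$), after twisting by $\omega$ to recover $\rhobarinert{f}$, are genuinely incompatible with $\rhobarinert{f} \cong I(1)$ or $\psmallmat{\omega}{*}{0}{1}$ with $*$ peu-ramifiée (the two shapes allowed by $k(\rhobar_f) = 2$ and indecomposability, per Remark \ref{rmk:medweight}). This is where the restriction $k < p^2 + 1$ is used: once $\mathrm{wt}(h)$ can be as large as $p^2 - 1$, the twist $I(t) \otimes \omega^j$ can wrap around modulo $p^2 - 1$ back to $I(1)$, and the contradiction disappears — which is precisely the source of the counterexamples mentioned after the statement of Theorem \ref{thm:medweight}. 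Keeping track of the characters $\omega_2^t$ modulo $p^2 - 1$ and the cyclotomic twists $\omega^j$ modulo $p - 1$ simultaneously, and verifying that no collision occurs below weight $p^2 + 1$, is the delicate bookkeeping that makes the proof work.
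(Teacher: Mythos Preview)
Your overall strategy---use \cite[Theorem~3.4(c)]{AS} to reduce to showing $\vpb_f^\pm$ is not in the image of $\theta$, then chase local Galois representations to a contradiction with $k(\rhobar_f)=2$---is the paper's strategy. But your recursive descent has a structural gap that the paper's proof explicitly works around.

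The form $h$ you produce has weight $k-p-1$ (your arithmetic slipped: $(k-2)-(p+1)+2=k-p-1$, not $k-p+1$), and since $k\equiv 2\pmod{p-1}$ this gives $\mathrm{wt}(h)\equiv 0\pmod{p-1}$. The map $\alpha$ of Lemma~\ref{lemma:alpha} is $S_0(p)$-equivariant only when the weight is $\equiv 2\pmod{p-1}$, so ``$\alpha(\vpb_h)$'' is not a Hecke-equivariant object and you cannot branch on whether it vanishes. Likewise $h$ need not satisfy $k(\rhobar_h)=2$, so your stated inductive hypothesis does not apply to it either. To salvage the descent you must replace the single map $\alpha$ by the full Ash--Stevens dichotomy at every weight (landing in $H^1_c(\Gamma_1,\F)^{(\omega^j)}$ for varying $j$, as in Lemma~\ref{lemma:nebenreps}) and track the accumulating $\omega$-twist. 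The paper does exactly this, reorganized as an \emph{upward} induction over all even weights $m$: it defines $\L^{\irr}(m)$ (resp.\ $\L^{\irr}_\theta(m)$) as the set of $t$ with $I(t)$ realized by some weight-$m$ eigenform (resp.\ one in the image of $\theta$), establishes the recursions
\[
\L^{\irr}_\theta(m)\subseteq\{t+p+1:t\in\L^{\irr}(m-p-1)\},\qquad
\L^{\irr}(m)\subseteq\L^{\irr}_\theta(m)\cup\{m'+1\},
\]
computes $\L^{\irr}_\theta(k)$ explicitly for $k\equiv 2\pmod{p-1}$, $k<p^2+1$, and checks that $1,p\notin\L^{\irr}_\theta(k)$; the reducible case is handled in parallel. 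This is the ``delicate bookkeeping'' you allude to, but it is not optional scaffolding---it \emph{is} the proof, and the induction genuinely must run over all residues of $m$ modulo $p-1$, not just $m\equiv 2$.

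Separately, your base case at $k=p+1$ is incorrect: $\alpha(\vpb_f^\pm)=0$ only kills the $Y^{k-2}$-coefficient, not the intermediate monomials $X^jY^{k-2-j}$ for $0<j<k-2$, so the Lemma~\ref{lemma:FE} argument cannot force $\vpb_f^\pm=0$. The paper handles $k=p+1$ by a direct appeal to \cite[Theorem~3.4(a)]{AS}.
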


\begin{proof}
First note that hypotheses (2) and (3) of Theorem~\ref{thm:medweight} imply that $k \equiv 2 \pmod{p-1}$.  The case $k=2$ is vacuous, and the case $k=p+1$ follows from \cite[Theorem 3.4(a)]{AS}.
For $k \geq 2p$, by \cite[Theorem 3.4(c)]{AS},
it suffices to show that $\vpb_f^\pm$ cannot lie in the image of the theta operator
$$\theta : H^1_c(\Gamma,\Vbar_{k-p-3}(1)) \to H^1_c(\Gamma,\Vbar_{k-2}).$$
We will prove this by showing that no eigensymbol in the image of $\theta$ has residual representation isomorphic to $\rhobar_f$ after restriction to
$I_p$.  

Assume first that $\rhobar_f|_{I_p}$ is irreducible and thus isomorphic to
$I(1)$.  For any weight $m \geq 2$, let
$\L^{\irr}(m)$ denote the set of $t \in \Z/(p^2-1)\Z$ such that
there exists an eigenform $g$ on $\Gamma$ of weight $m$ with
$\rhobar_g|_{I_p} \cong I(t)$.  ($\L^{\irr}(m)$ should really be regarded as
a subset of the quotient of $\Z/(p^2-1)\Z$ by the relation that
$t \sim pt$ for all $t$.)  Let $\L^{\irr}_{\theta}(m)$ denote the subset of 
$\L^{\irr}(m)$ of 
$t$ which occur for forms $g$ in the image of $\theta$.  We aim to show that $1,p \notin \L^{\irr}_{\theta}(m)$ for $m<p^2+1$.

By \cite[Theorem 3.4]{AS} and  Lemma \ref{lemma:nebenreps}, we have
\begin{align*}
\L^{\irr}_\theta(m) &= \emptyset \text{~~~~for $m \leq p+1$}; \\
\L^{\irr}_\theta(m) &\subseteq \bigl\{t + p + 1 \,;\, t \in \L^{\irr}(m-p-1)
\bigr\} \text{~~~~for $m > p+1$}; \\
\L^{\irr}(m) &\subseteq \L^{\irr}_\theta(m) \cup \{ m' + 1\}
\end{align*}
where $m'$ denotes the remainder when $m-2$ is divided by $p-1$.
It now follows by a straightforward induction that for $k < p^2+1$,
$k \equiv 2 \pmod{p-1}$, one has
$$\L^{\irr}_\theta(k) = \left\{ j(p-1) + 1 \,;\, 2 \leq j \leq \frac{k-2}{p-1},
j \neq \frac{p+3}{2} \right\}.$$
(Note that the induction involves all even $k$, not just those which are
congruent to $2$ modulo $p-1$.)
In particular, neither $1$ nor $p$ lies in $\L^{\irr}_\theta(k)$ for such
$k$; it follows that $\vpb_f^\pm$ does not lie in the image of $\theta$, as desired.

When $\rhobar_f|_{I_{p}}$ is reducible it is necessarily isomorphic to
$\psmallmat{\omega}{*}{0}{1} $ with $*$ non-zero.  Let
$\L^{\rd}(m) \subseteq \Z/(p-1)\Z$ denote the set of all $t$ such that
$\psmallmat{\omega^t}{*}{0}{*}$ (with $*$ non-zero)
can occur as the restriction to $I_p$ of the residual representation of
some form of weight $k$ for $\Gamma$ which has a globally irreducible
residual representation.  As before, we have
\begin{align*}
\L^{\rd}_\theta(m) &= \emptyset \text{~~~~for $m \leq p+1$}; \\
\L^{\rd}_\theta(m) &\subseteq \bigl\{t + 1 \,;\, t \in \L^{\rd}_\theta(m-p-1)
\bigr\} \text{~~~~for $m > p+1$}; \\
\L^{\rd}(m) &\subseteq \L^{\rd}_\theta(m) \cup \{ m-1 \}.
\end{align*}
Once again, a straightforward induction establishes that
for $k \leq \frac{p^2+3}{2}$, $k \equiv 2 \pmod{p-1}$ we have
$$\L^{\rd}_\theta(k) \subseteq \left\{
-j \,;\, 0 \leq j \leq \frac{k-2}{p-1}-2 \right\}$$
while for $\frac{p^2+3}{2} < k < p^2+1$,
$k \equiv 2 \pmod{p-1}$ we have
$$\L^{\rd}_\theta(k) \subseteq \left\{
 -j \,;\, 0 \leq j \leq \frac{k-2}{p-1}-3 \right\}.$$
In particular, $1$ does not lie in $\L^{\rd}_\theta(k)$ for such $k$,
so that $\vpb_f^\pm$ does lie in the image of $\theta$.
\end{proof}

\subsection{Proof of Theorem \ref{thm:medweight}}
Let $\varphi_f \in H^1_c(\Gamma,V_{k-2}(\O))$ denote the modular symbol attached to $f$, and let $\vpb_f$ denote its non-zero image in $H^1_c(\Gamma,\Vbar_{k-2})$.  By Lemma \ref{lemma:inj},  $\alpha(\vpb^\pm_f)$ is non-zero. Thus, by Lemma \ref{lemma:mum}, $\mm{f}{\pm} = 0$ which establishes the first part of the theorem.

By Lemma \ref{lemma:inj}, $\alpha(\vpb_f)$ is a (non-zero) eigensymbol in $H^1_c(\Gamma_0,\F)$ with the same Hecke-eigenvalues as $\vpb_f$ for all primes $\ell$ (even $\ell=p$).  By \cite[Proposition 2.5]{AS}, there exists an eigenform $h \in S_2(\Gamma_0)$ whose Hecke-eigenvalues reduce to the eigenvalues of $\vpb_f$.   Since $\vpb_f$ is \nop, the same is true of $h$.  However, this implies that $h$ must be old at $p$; indeed, any form of weight 2 which is $p$-new is automatically $p$-ordinary.  Let $g \in S_2(\Gamma)$ denote the corresponding eigenform which is new at $p$, but has all same Hecke-eigenvalues at primes away from $p$; that is, $h$ is in the span of $g(z)$ and $g(pz)$. 

Let $\vpb_g$ in $H^1_c(\Gamma,\F)$ denote the reduction of the modular symbol attached to $g$.  One might except a congruence between $\vpb_g$ and $\alpha(\vpb_f)$.  However, the former symbol has level $\Gamma$ while the latter has level $\Gamma_0$.  If we view $\vpb_g$ in $H^1_c(\Gamma_0,\F)$, then it is no longer an eigensymbol at $p$.  Instead, we consider the symbol
 $\vpb_g \big| \psmallmat{p}{0}{0}{1}$  in $H^1_c(\Gamma_0,\F)$ which is also an eigensymbol at all primes away from $p$, and moreover,
\begin{align*}
\left( \vpb_g \big| \psmallmat{p}{0}{0}{1} \right) \big| U_p
&= \sum_{a=0}^{p-1} \left( \vpb_g \big| \psmallmat{p}{0}{0}{1} \right) \big| \psmallmat{1}{a}{0}{p}  \\
&= \sum_{a=0}^{p-1}  \vpb_g \big| \psmallmat{p}{pa}{0}{p}  
= \sum_{a=0}^{p-1}  \vpb_g \big| \psmallmat{1}{a}{0}{1}  \\
&= \sum_{a=0}^{p-1}  \vpb_g  = p \cdot \vpb_g = 0.
\end{align*}
Thus, $\vpb_g \big| \psmallmat{p}{0}{0}{1}$ is a Hecke-eigensymbol for the full Hecke-algebra.  As the same is true of $\alpha(\vpb_f)$, by mod $p$ multiplicity one (see \cite[Theorem 2]{Ribet-Brazil}), we have 
$$
\alpha(\vpb_f^\pm) = c^\pm \cdot \vpb_g^\pm \Dp
$$
with $c^\pm \neq 0$.  Moreover, by changing $\Omega^\pm_f$ by a $p$-unit, we can take $c^\pm$ equal to 1.  Then, by Lemmas \ref{lemma:alphastick} and \ref{lemma:degen},
$$
\red{\ttnf} = {\ttn{\alpha(\vpb_f)}} = \ttn{\vpb_g \Dp} =\nun{n}{n-1} \bigl(\red{\tt{n-1}{\varphi_g}}\bigr)
$$
completing the proof of theorem.

\section{Results in small slope}
\label{sec:smallslope}

In this section, we will prove a theorem along the lines of Theorem \ref{thm:medweight}, but instead of assuming a bound on the weight of $f$, we assume on bound on its slope.  Interestingly, the proof uses a congruence argument even though the $\mu$-invariants that appear need not be zero.

\subsection{Statement of theorem}

\begin{thm}
\label{thm:lowslope}
Let $f$ be an eigenform in $S_k(\Gamma,\Qpbar)$ such that 
\begin{enumerate}
\item $\rhobar_f$ is irreducible,
\vspace{.1cm}
\item 
\label{hyp:slope}
$0 < \ord_p(a_p) < p-1$,
\vspace{.1cm}
\item $k(\rhobar_f) = 2$ and $\rhobarloc{f}$ is not decomposable.
\end{enumerate}
Then 
\begin{enumerate}
\item $\mm{f}{\pm} \leq \ord_p(a_p)$ holds for both choices of sign;
\vspace{.2cm}
\item there exists an eigenform $g \in S_2(\Gamma)$ with $\red{a_\ell(f)} = \red{a_\ell(g)}$ for all primes $\ell \neq p$, and a choice of cohomological  periods $\Omega_f, \Omega_g \in \C$ such that 
$$
\red{\varpi^{-a}\ttv{n}{f}{i}} =  \nun{n}{n-1}\bigl(\red{ \ttv{n-1}{g}{i}}\bigr) ~\text{~in~~} \F[G_n]
$$ 
where $a \in \Z^{\geq0}$ is such that $\ord_p(\varpi^a)=\mm{f}{\ve_i}$.
\end{enumerate}
\end{thm}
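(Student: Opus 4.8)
The plan is to mimic the proof of Theorem \ref{thm:medweight}, but to replace the map $\alpha$ (which is useless here, since $\mm{f}{\pm}$ may be positive and then $\alpha(\vpb_f)=0$ by Lemma \ref{lemma:mum}) by a ``leading term'' map built from the $\Gamma_0$-stable filtration $\Fil^r(V_{k-2})$ of the introduction. Put $g=k-2$ and $r=\ord_p(a_p)+1$; by hypothesis $r\le p-1$, and comparing determinants on $I_p$ one sees that $k(\rhobar_f)=2$ forces $k\equiv 2\pmod{p-1}$, so that after reduction the coefficient of $Y^{g}$ transforms trivially under $\Gamma_0$. I would first check that $\Fil^r(V_g)$ is stable under $S_0(p)$ (so the quotient carries a Hecke action at $p$, agreeing with $U_p$ on the $Y^g$-coefficient by the vanishing in Lemma \ref{lemma:alpha}), and that $V_g(\Zp)/\Fil^r(V_g)$ has finite length with one-dimensional Jordan--H\"older constituents on which $S_0(p)$ acts through characters $\psmallmat{a}{b}{c}{d}\mapsto\bar a^{\,u}\,\overline{(ad-bc)}^{\,v}$; identifying these characters is a direct binomial computation.

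Next I would prove the auxiliary fact that if a nonzero $T_p$-eigensymbol with eigenvalue $a_p$ takes all of its values in $\Fil^r(V_g)$, then $\ord_p(a_p)\ge r$; since $\varphi_f^\pm\ne 0$ (as $\mm{f}{\pm}<\infty$) and $r>\ord_p(a_p)$, this forces the image $\vpb_f^\pm$ of $\varphi_f^\pm$ in $H^1_c(\Gamma_0,V_g(\Zp)/\Fil^r(V_g))$ to be nonzero (all of this working with the literal $\Hom$-description of modular symbols and restricting from $\Gamma$ to $\Gamma_0$). Filtering the target and passing down until $\vpb_f^\pm$ first meets a graded piece produces a nonzero Hecke-eigensymbol in $H^1_c(\Gamma_0,\F)$ valued in some constituent, with prime-to-$p$ eigenvalues those of $f$; by \cite[Proposition 2.5]{AS} its eigensystem is carried by a weight-$2$ eigenform on $\Gamma_1$ of nebentype a power of $\omega$, and by Lemma \ref{lemma:nebenreps} and Theorem \ref{thm:reps} the restriction to $I_p$ of that form's residual representation---hence, after undoing the determinant twist $v$ of the constituent, $\rhobarinert{f}$ itself---lies on an explicit short list depending on $(u,v)$.

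The third step is the bookkeeping: running through that list, the hypotheses that $\rhobar_f$ has Serre weight $2$ and $\rhobarloc{f}$ is indecomposable eliminate every constituent except the trivial-action ones attached to the $Y^{g}$-coefficient, and the bound $\ord_p(a_p)<p-1$, i.e.\ $r\le p-1$, is exactly what keeps the constituents that occur inside the range where Lemma \ref{lemma:nebenreps} is applicable, so that the elimination goes through. It follows that $\varphi_f^\pm$ takes its values in the submodule $W_m=\{\,P: p^{m-j+1}\mid b_j\ (1\le j\le m),\ p^m\mid b_0\,\}$, where $m=\mm{f}{\pm}$, and that $\mm{f}{\pm}\le\ord_p(a_p)$; equivalently, $\psi_f^\pm\colon D\mapsto\overline{\varpi^{-a}\varphi_f^\pm(D)\big|_{(X,Y)=(0,1)}}$ (with $\ord_p(\varpi^a)=\mm{f}{\pm}$) is a well-defined nonzero element of $H^1_c(\Gamma_0,\F)$, which by the Lemma \ref{lemma:alpha} computation is a Hecke-eigensymbol for the full algebra with $U_p$-eigenvalue $\red{a_p}=0$.

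From here the endgame is identical to that of Theorem \ref{thm:medweight}: the eigensystem of $\psi_f^\pm$ comes, via \cite[Proposition 2.5]{AS}, from a weight-$2$ eigenform on $\Gamma_0$, which---having $a_p\equiv 0$, hence being non-ordinary, hence $p$-old---arises from an eigenform $g\in S_2(\Gamma)$ with $\red{a_\ell(g)}=\red{a_\ell(f)}$ for all $\ell\ne p$; since $\vpb_g^\pm\Dp$ is also a full Hecke-eigensymbol with $U_p$-eigenvalue $0$, mod $p$ multiplicity one \cite[Theorem 2]{Ribet-Brazil} gives $\psi_f^\pm=c^\pm\cdot\vpb_g^\pm\Dp$ with $c^\pm\ne 0$, and rescaling $\Omega_f^\pm$ by a $p$-adic unit makes $c^\pm=1$. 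Then Lemma \ref{lemma:degen} with $g=0$, together with the definitions of the Mazur--Tate elements, yields $\red{\varpi^{-a}\ttv{n}{f}{i}}=\nun{n}{n-1}\bigl(\red{\ttv{n-1}{g}{i}}\bigr)$ for $a$ with $\ord_p(\varpi^a)=\mm{f}{\ve_i}$. I expect the main obstacle to be the second and third steps---correctly enumerating the Jordan--H\"older factors of $V_g(\Zp)/\Fil^r(V_g)$ with the right Hecke action at $p$, and verifying that the local Galois constraints genuinely single out the $Y^g$-constituents throughout the range $\ord_p(a_p)<p-1$; the implication ``values in $\Fil^r(V_g)\Rightarrow\ord_p(a_p)\ge r$'' should be a comparatively routine computation with the three-term relation of Proposition \ref{prop:Qseqn}.
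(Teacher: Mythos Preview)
Your plan is essentially the paper's own proof: introduce the filtration $\Fil^r$ (and its refinement, which the paper calls $\Fil^{r,s}$; your $W_m$ is exactly $\Fil^{m,m}$), use the slope bound to force the symbol out of $\Fil^r$, identify the graded piece it hits via Lemma~\ref{lemma:nebenreps}, and show the Galois hypotheses pin down the $Y^{k-2}$-constituent, after which the endgame of Theorem~\ref{thm:medweight} runs verbatim.

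One correction: the implication ``values in $\Fil^r(V_g)\Rightarrow \ord_p(a_p)\ge r$'' does \emph{not} come from the three-term relation of Proposition~\ref{prop:Qseqn}, which lives in group algebras and sees only the $Y^{k-2}$-coefficient. The paper proves it (Lemma~\ref{lemma:filtslope}) by writing out $(\varphi\,|\,T_p)(D)$ directly; the terms $\varphi(\cdot)\,|\,\psmallmat{1}{a}{0}{p}$ land in $p^rV_g$ by an easy computation, but the remaining term $\varphi(\cdot)\,|\,\psmallmat{p}{0}{0}{1}$ requires the Atkin--Lehner functional equation (Lemma~\ref{lemma:FE}) to control, since $\psmallmat{p}{0}{0}{1}$ moves divisibility from low-degree to high-degree coefficients. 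This is the one non-routine ingredient you are missing.
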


We maintain the hypotheses and notation of Theorem \ref{thm:lowslope} in the following corollary.

\begin{cor} 
\label{cor:invlowslope}
If $\rhobarloc{f}$ is reducible (resp.\ irreducible), then
\begin{enumerate}
\item 
\label{item:muls}
$\mu(\tnfi) = \mm{f}{\ve_i}$ for $n \gg 0\iff\mu(g,\omega^i) = 0$ (resp.\ $\mu^\pm(g,\omega^i)=0$).
\vspace{.2cm}
\item 
if the equivalent conditions of (\ref{item:muls}) hold and $n \gg 0$, then 
$$
\lam(\tnf) =   
p^n - p^{n-1} +
\begin{cases}
\lam(g,\omega^i) & \text{~if~}\rhobarloc{f} \text{~is~reducible}, \\
q_{n-1} + \lam^{\text{-}\ve_n}(g,\omega^i) & \text{~if~}\rhobarloc{f} \text{~is~irreducible}.
\end{cases}
$$ 
\end{enumerate}
\end{cor}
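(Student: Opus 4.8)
The plan is to follow the proof of Corollary~\ref{cor:invmedweight}, now feeding in Theorem~\ref{thm:lowslope} in place of Theorem~\ref{thm:medweight} and carrying the offset $\mm{f}{\ve_i}$ along throughout. As a preliminary one records, exactly as in Remark~\ref{rmk:medweight}.\ref{part:gord}, that the form $g$ produced by Theorem~\ref{thm:lowslope} is $p$-ordinary if and only if $\rhobarloc{f}$ is reducible: since $\rhobar_f$ is irreducible it is determined by the traces $\red{a_\ell}$ with $\ell\nmid Np$, so $\rhobar_g\cong\rhobar_f$ and hence $\rhobarloc{g}\cong\rhobarloc{f}$, and as $g$ has weight $2$ Theorem~\ref{thm:reps} identifies ordinarity of $g$ with reducibility of $\rhobarloc{g}$.

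For part~(1), fix $a\in\Z^{\geq0}$ with $\ord_p(\varpi^a)=\mm{f}{\ve_i}$ as in Theorem~\ref{thm:lowslope}. Since $\mu(\tnfi)\geq\mm{f}{\ve_i}$ always, $\varpi^a$ divides $\tnfi$ in $\O[G_n]$; moreover $\mu(\tnfi)=\mm{f}{\ve_i}$ holds precisely when $\red{\varpi^{-a}\tnfi}\neq0$ in $\F[G_n]$, while $\lambda(\tnfi)=\lambda(\varpi^{-a}\tnfi)$ in all cases. Using the congruence $\red{\varpi^{-a}\tnfi}=\nun{n}{n-1}\bigl(\red{\theta_{n-1,i}(g)}\bigr)$ of Theorem~\ref{thm:lowslope} and the injectivity of corestriction on group algebras over $\F$, this reduction is nonzero if and only if $\mu(\theta_{n-1,i}(g))=0$. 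Thus part~(1) reduces to the assertion that $\mu(\theta_{n-1,i}(g))=0$ for $n\gg0$ holds if and only if $\mu(g,\omega^i)=0$ (when $g$ is ordinary) resp.\ $\mu^\pm(g,\omega^i)=0$ (when $g$ is non-ordinary). The non-ordinary case is immediate from Theorem~\ref{thm:wt2}(1). In the ordinary case the implication ``$\Leftarrow$'' is Proposition~\ref{prop:invord} applied to $g$; the implication ``$\Rightarrow$'' is the one genuinely non-formal point, discussed below.

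For part~(2), grant the equivalent conditions of~(1), so that $\mu(\theta_{n-1,i}(g))=0$ for $n\gg0$. Then Lemma~\ref{lemma:nuninv}(2) together with the congruence gives $\lambda(\tnfi)=\lambda(\varpi^{-a}\tnfi)=p^n-p^{n-1}+\lambda(\theta_{n-1,i}(g))$ for $n\gg0$. When $g$ is ordinary, Proposition~\ref{prop:invord} identifies $\lambda(\theta_{n-1,i}(g))$ with $\lambda(g,\omega^i)$ for $n\gg0$; when $g$ is non-ordinary, the relevant case of Theorem~\ref{thm:wt2}(2) identifies it with $q_{n-1}+\lambda^{-\ve_n}(g,\omega^i)$ for $n\gg0$, once one matches the parity conventions (the one-level shift inherent in $\theta_{n,i}(f)=\omega^i(\vartheta_{n+1}(f))$ is what turns the parity of the cyclotomic level into $\ve_n$). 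Substituting yields the displayed formula, and Corollary~\ref{cor:invlowslope} follows.

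Everything above is bookkeeping once Theorem~\ref{thm:lowslope} is in hand; the single step requiring real input is the implication ``$\mu(\theta_{n-1,i}(g))=0$ for $n\gg0\ \Rightarrow\ \mu(g,\omega^i)=0$'' for the ordinary weight-$2$ form $g$. A purely term-by-term comparison does not suffice here, as the $X_0(11)$ example of Section~\ref{sec:pL} shows: there $\mu(\tnf)=0$ for all $n$ yet the $p$-stabilization has strictly positive $\mu$, the obstruction being that $\lambda(\theta_{n,i}(g))$ grows like a corestriction. I would derive the needed implication from the $p$-stabilization identity~(\ref{eqn:pstab}) (via Lemma~\ref{lemma:periods}) together with the three-term relation of Proposition~\ref{prop:Qseqn}, invoking the finite-level ordinary analysis of \cite{algkur} to rule out this Eisenstein-type growth of $\lambda(\theta_{n,i}(g))$ when $\rhobar_g$ (equivalently $\rhobar_f$) is globally irreducible; this is the place where the irreducibility hypothesis, rather than mere indecomposability of $\rhobarloc{f}$, really enters.
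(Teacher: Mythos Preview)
Your approach is essentially the paper's own: the paper gives no separate proof of Corollary~\ref{cor:invlowslope}, and the parallel Corollary~\ref{cor:invmedweight} is dispatched in two lines by citing Theorem~\ref{thm:medweight}, Theorem~\ref{thm:wt2}, and Lemma~\ref{lemma:nuninv} (together with the observation of Remark~\ref{rmk:medweight}.\ref{part:gord}). Your write-up simply unpacks this with the $\varpi^{-a}$ shift from Theorem~\ref{thm:lowslope} inserted, and with Proposition~\ref{prop:invord} supplying the ordinary input that the paper's terse citation list actually omits.

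Your flag on the implication ``$\mu(\theta_{n-1,i}(g))=0$ for $n\gg0\Rightarrow\mu(g,\omega^i)=0$'' in the ordinary case is well-taken: Proposition~\ref{prop:invord} only gives the converse, and the $X_0(11)$ example shows irreducibility of $\rhobar_g$ is genuinely needed. The paper's proof of Corollary~\ref{cor:invmedweight} does not address this direction either, so you have not introduced a gap---you have noticed one that is already present (or at least suppressed) in the paper. Your proposed resolution via the $p$-stabilization identity \eqref{eqn:pstab} and the three-term relation is the natural line; one can push it through by observing that if $\mu(g,\omega^i)>0$ then \eqref{eqn:pstab} forces $\red{\theta_{n,i}(g)}=\alpha^{-1}\nun{n}{n-1}(\red{\theta_{n-1,i}(g)})$ for $n\gg0$, whence $\vpb_g=\alpha^{-1}\vpb_g\Dp$ on the relevant eigenspace, contradicting Ihara's lemma exactly as in the proof of Lemma~\ref{lemma:periods}. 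In any case, this is a refinement of the paper's argument rather than a deviation from it.
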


\begin{remark}
\noindent
\begin{enumerate}
\item By results of Buzzard and Gee \cite{BG},  if $k \equiv 2 \pmod{p-1}$ and $\ord_p(a_p) <1$, then $\rhobarinert{f} \cong I(1)$ and thus hypotheses (\ref{hyp:irred}) and (\ref{hyp:repn}) are automatic.

\item
Hypothesis (\ref{hyp:slope}) is necessary as we have found forms of slope $p-1$ whose $\lambda$-invariants do not follow the pattern described by Corollary \ref{cor:invlowslope}. In these examples, the $\lambda$-invariants satisfy
$$
\lambda(\tnf) = p^n - p^{n-2} + \begin{cases} 
\lambda(g) & \text{~if~}\rhobarloc{f}\text{~is~reducible,} \\
q_{n-2} + \lambda^{\ve_n}(g) & \text{~if~}\rhobarloc{f}\text{~is~irreducible,} 
\end{cases}
$$
for $n \gg 0$ where $g$ is some congruent form in weight 2.  This phenomenon will be further explored in section \ref{sec:example}.
\end{enumerate}
\end{remark}

\subsection{Filtration lemmas}
\label{sec:filt}

Let $\O$ be the ring of integers in a finite extension of $\Qp$, and consider the filtration on $V_g(\O)$ given by
$$
\FV{r} = \Fil^r(V_g(\O)) = 
\left\{ \sum_{j=0}^g b_j X^j Y^{g-j} \in V_g(\O) ~:~
p^{r-j} \mid b_j \text{~for~} 0 \leq j \leq r-1
\right\}.
$$

Recall that the semi-group $S_0(p)$ appearing in the next lemma was introduced in section \ref{sec:alpha}.

\begin{lemma} We have:
\label{lemma:filt}
\begin{enumerate}
\item $\FV{r}$ is stable under the action of $S_0(p)$.
\item If $P \in \FV{r}$, then $P \big| \psmallmat{1}{a}{0}{p} \in p^r V_g(\O)$.
\end{enumerate}
\end{lemma}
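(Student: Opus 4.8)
The plan is to prove both parts by a direct expansion of $P\big|\gamma$ using the action formula $(P|\gamma)(X,Y) = P(dX-cY,\,-bX+aY)$ together with careful bookkeeping of powers of $p$; there is no conceptual difficulty here, only the valuation count, so I will just indicate how it goes.

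First I would do part (2). For $\gamma = \psmallmat{1}{a}{0}{p}$ the action formula gives $(P|\gamma)(X,Y) = P(pX,\,Y-aX)$, so writing $P = \sum_{j=0}^g b_j X^j Y^{g-j}$ we get
$$
P\big|\gamma = \sum_{j=0}^g b_j\, p^j\, X^j\,(Y-aX)^{g-j}.
$$
Since $a \in \Z$, each $(Y-aX)^{g-j}$ has coefficients in $\O$, so the $j$-th summand has all coefficients in $p^j b_j \O$. If $j \ge r$ this is contained in $p^r\O$ because $b_j \in \O$; if $j \le r-1$, then $P \in \FV{r}$ forces $p^{r-j} \mid b_j$, so again $p^j b_j \O \subseteq p^r\O$. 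Summing, every coefficient of $P\big|\gamma$ lies in $p^r\O$, which is the assertion.

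Next, part (1). Fix $\gamma = \psmallmat{a}{b}{c}{d} \in S_0(p)$, so $p \mid c$, and expand
$$
P\big|\gamma = \sum_{j=0}^g b_j\,(dX-cY)^j\,(-bX+aY)^{g-j}.
$$
The coefficient of $X^m Y^{g-m}$ is a sum of terms $b_j \binom{j}{i} d^i(-c)^{j-i}\binom{g-j}{l}(-b)^l a^{g-j-l}$ with $i+l=m$ and $0\le i\le j$; since $p\mid c$, such a term is divisible by $p^{e}$ with $e = \ord_p(b_j) + (j-i)$, and by the definition of $\FV{r}$ we have $\ord_p(b_j) \ge \max(0,r-j)$. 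Suppose now $m \le r-1$, so $i \le m < r$. If $j \ge r$ then $e \ge j-i \ge r-m$; if $j < r$ then $e \ge (r-j)+(j-i) = r-i \ge r-m$. In either case the term, hence the whole coefficient of $X^m Y^{g-m}$, is divisible by $p^{r-m}$; as this holds for all $m \le r-1$, we conclude $P\big|\gamma \in \FV{r}$.

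I expect the only point to watch is the constraint $i \le m$ on the cross-terms: this is exactly what lets the surplus divisibility — coming from $p\mid c$ when $j\ge r$, or from $b_j \in p^{r-j}\O$ when $j<r$ — cover the required deficit $r-m$. If a more structural write-up is preferred, one can instead note $\FV{r} = \sum_{s=0}^r p^{\max(0,r-s)} X^s V_{g-s}(\O)$, reduce (1) to the single claim $X^s V_{g-s}(\O)\big|\gamma \subseteq \Fil^s(V_g(\O))$ (which uses only $p\mid c$, via $(dX-cY)^s \in \Fil^s(V_s(\O))$ and $\Fil^s(V_s(\O))\cdot V_{g-s}(\O) \subseteq \Fil^s(V_g(\O))$), and then observe $p^{\max(0,r-s)}\Fil^s(V_g(\O)) \subseteq \FV{r}$.
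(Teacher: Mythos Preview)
Your proof is correct and follows essentially the same approach as the paper: a direct expansion of $P\big|\gamma$ on monomials together with the observation that $p\mid c$ forces the coefficient of $X^sY^{g-s}$ in $(dX-cY)^j(-bX+aY)^{g-j}$ to be divisible by $p^{j-s}$ for $s\le j$, from which the required divisibility for $\FV{r}$ follows by the same case split you give. The paper's write-up is terser (it leaves the combination over $j$ implicit), but the underlying valuation count is the same; your alternative structural rewriting at the end is a nice addition but not in the paper.
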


\begin{proof}
For $\psmallmat{a}{b}{c}{d} \in S_0(p)$, we have
$$
X^j Y^{g-j} \big| \psmallmat{a}{b}{c}{d} =
(dX-cY)^j (-bX+aY)^{g-j}.
$$
Expanding the above expression and using the fact that  $p \mid c$ and $p \nmid a$, one sees that the coefficient of $X^{s}Y^{g-s}$ is divisible by $p^{j-s}$ for $s \leq j$.  The first part of the lemma follows from this observation.

For the second part, we have that 
$$
p^{r-j} X^j Y^{g-j} \big| \psmallmat{1}{a}{0}{p} =
p^{r-j} (pX)^j (-aX+Y)^{g-j} \in p^r V_g(\O)
$$
which proves the lemma.
\end{proof}

\begin{lemma}
\label{lemma:filtslope}
If $\varphi \in H^1_c(\Gamma,V_g(\O))$ is a $T_p$-eigensymbol which takes values in $\FV{r}$ and $|| \varphi || =1$, then the slope of $\varphi$ is greater than or equal to $r$.
\end{lemma}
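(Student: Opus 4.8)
The plan is to play the Hecke relation at $p$ against the filtration. Writing the $T_p$-operator at level $\Gamma$ as $T_p\varphi = \varphi \big| \psmallmat{p}{0}{0}{1} + \sum_{a=0}^{p-1}\varphi \big| \psmallmat{1}{a}{0}{p}$, we have $a_p\varphi = \varphi \big| \psmallmat{p}{0}{0}{1} + \sum_{a=0}^{p-1}\varphi \big| \psmallmat{1}{a}{0}{p}$. Each $\psmallmat{1}{a}{0}{p}$ lies in $S_0(p)$ and every value of $\varphi$ lies in $\FV{r}$, so Lemma~\ref{lemma:filt}(2) shows that $\sum_{a}\varphi \big| \psmallmat{1}{a}{0}{p}$ takes values in $p^r V_g(\O)$; hence $a_p\varphi \equiv \varphi \big| \psmallmat{p}{0}{0}{1} \pmod{p^r}$ as functions on $\Div^0(\P^1(\Q))$. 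Since $\psmallmat{p}{0}{0}{1}$ multiplies $X^jY^{g-j}$ by $p^{g-j}$ and $\varphi$ is $\O$-valued, the coefficient of $X^jY^{g-j}$ in every value of $\varphi \big| \psmallmat{p}{0}{0}{1}$ is divisible by $p^{g-j}$. The upshot: if some value $\varphi(D)$ has a unit coefficient of $X^{j}Y^{g-j}$ with $j\le g-r$, then $a_p$ is divisible by $p^r$, so the slope of $\varphi$ is at least $r$. Thus everything reduces to producing a unit coefficient in $X$-degree at most $g-r$.

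For that I would use that $\varphi$ is modular for $\Gamma = \Gamma_0(N)$ with $p\nmid N$, applying the element $\gamma_0 = \psmallmat{1}{0}{N}{1}\in\Gamma$. For a divisor $D$ let $Q_D(T)\in\F[T]$ be the polynomial whose $T^j$-coefficient is the reduction mod $\varpi$ of the coefficient of $X^jY^{g-j}$ in $\varphi(D)$; since $\varphi$ takes values in $\FV{r}$, we have $T^r\mid Q_D(T)$ for all $D$. Feeding $\varphi(\gamma_0 D)=\varphi(D)\big|\gamma_0^{-1}$ and $\varphi(\gamma_0 D)\in\FV{r}$ into the computation of $\gamma_0$ on monomials shows that the Hasse (divided-power) derivatives of $Q_D$ of orders $0,\dots,r-1$ vanish at $T=N$; as $N\not\equiv 0 \pmod p$, this forces $T^r(T-N)^r\mid Q_D(T)$. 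Because $\deg Q_D\le g$, either $Q_D\equiv 0$ for every $D$ --- impossible, since $|| \varphi || =1$ gives some $Q_D\ne 0$ --- or else $g\ge 2r$ and $Q_D = T^r(T-N)^r R_D$ with $\deg R_D\le g-2r$. Choosing $D_0$ with $Q_{D_0}\ne 0$, the lowest-degree monomial of $Q_{D_0}$ lies in degree $r+(\text{order of vanishing of }R_{D_0}\text{ at }0)\le r+(g-2r)=g-r$ and has nonzero coefficient; this is the sought unit coefficient of $\varphi(D_0)$ in $X$-degree $\le g-r$, and the first paragraph then concludes that $\ord_p(a_p)\ge r$.

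The step I expect to be the main obstacle is this second one: a unit coefficient could a priori sit only in high $X$-degrees $>g-r$, exactly where the twist by $\psmallmat{p}{0}{0}{1}$ supplies too little $p$-divisibility. The content is that such a ``top-heavy'' value is incompatible with $\Gamma$-modularity when $p\nmid N$, because $\gamma_0$ mixes the monomials and propagates the $\FV{r}$ condition on the low-order ones into the factorization $T^r(T-N)^r\mid Q_D$. Two routine points to verify are that an order-$r$ divided-power zero really yields the factor $(T-N)^r$ (this is why divided powers rather than ordinary derivatives must be used, so that it holds in every characteristic), and that the coset representatives for $T_p$ can be normalized so the $p$ parabolic ones are the $\psmallmat{1}{a}{0}{p}$ lying in $S_0(p)$. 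When $\varphi=\varphi_f$ for a newform one could instead invoke the functional equation of Lemma~\ref{lemma:FE} to symmetrize the valuations of the $X^jY^{g-j}$- and $X^{g-j}Y^{j}$-coefficients, again excluding top-heavy values; the $\gamma_0$-argument is preferable since it applies to an arbitrary $T_p$-eigensymbol.
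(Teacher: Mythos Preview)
Your proof is correct, and the first paragraph coincides with the paper's opening move: expand $T_p$ and kill the $\psmallmat{1}{a}{0}{p}$-terms using Lemma~\ref{lemma:filt}(2). The divergence is in how you handle the remaining $\psmallmat{p}{0}{0}{1}$-term. The paper uses the Atkin--Lehner involution $w_N = \psmallmat{0}{-1}{N}{0}$ via Lemma~\ref{lemma:FE}: applying $w_N$ swaps the $X^jY^{g-j}$- and $X^{g-j}Y^{j}$-coefficients up to $p$-units, so the $\FV{r}$ constraint on low-degree coefficients forces $p^{r-j}\mid a_{g-j}$ for $j<r$, and then $\varphi(\psmallmat{p}{0}{0}{1}D)\big|\psmallmat{p}{0}{0}{1}$ lands in $p^rV_g(\O)$ directly. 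You instead exploit $\gamma_0 = \psmallmat{1}{0}{N}{1}\in\Gamma$: reducing mod $\varpi$ and dehomogenizing, $\Gamma$-equivariance gives $Q_{\gamma_0 D}(T)=Q_D(T+N)$, so $T^r\mid Q_{\gamma_0 D}$ yields $(T-N)^r\mid Q_D$; combined with $T^r\mid Q_D$ and $p\nmid N$ this forces a unit coefficient in $X$-degree $\le g-r$. Both routes achieve the same reduction; yours is slightly longer but genuinely more general, since Lemma~\ref{lemma:FE} is stated only for $\varphi_f$ with $f$ a newform, whereas your argument uses nothing beyond $\Gamma$-invariance and so proves the lemma exactly as stated for an arbitrary $T_p$-eigensymbol. (You noticed this yourself in the final paragraph.) One cosmetic remark: the Hasse-derivative detour is unnecessary, as the substitution $T\mapsto T+N$ already gives $(T-N)^r\mid Q_D$ over any field.
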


\begin{proof} 
Write $\varphi \big| T_p = \lambda \cdot \varphi$, and choose $D \in \Delta_0$ such that $||\varphi(D)||=1$.  We then have
\begin{equation}
\label{eqn:up}
\lambda \cdot \varphi(D) = (\varphi \big| T_p)(D) = \sum_{a=0}^{p-1} \varphi\left(\psmallmat{1}{a}{0}{p} D\right)\big| \psmallmat{1}{a}{0}{p} + \varphi\left(\psmallmat{p}{0}{0}{1}D\right) \Dp.
\end{equation}
By Lemma \ref{lemma:filt}, all of the terms on the right-hand side 
are divisible by $p^r$ except for possibly the last.

To deal with the final term write $\varphi\left(\psmallmat{p}{0}{0}{1}D\right) = \sum_{j=0}^g a_j X^j Y^{g-j}$.  By Lemma \ref{lemma:FE}, we have that
$$
 \sum_{j=0}^g a_j X^j Y^{g-j} \Big| \psmallmat{0}{-1}{N}{0} = 
 \sum_{j=0}^g a_j (-NY)^j X^{g-j} =
 \sum_{j=0}^g (-1)^j N a_{g-j} X^j Y^{g-j} 
$$
is also a value of $\varphi$, and thus is in $\FV{r}$.  In particular, for $0 \leq j \leq r$, we have $p^{r-j} \mid Na_{g-j}$, and hence $p^{r-j} \mid a_{g-j}$ as $\gcd(N,p)=1$.  Finally,
$$
\varphi \left(\psmallmat{p}{0}{0}{1}D\right) \Big| \psmallmat{p}{0}{0}{1} = \sum_{j=0}^g a_j X^j Y^{g-j} \Big| \psmallmat{p}{0}{0}{1} = 
\sum_{j=0}^g a_j p^{g-j} X^j {Y}^{g-j}
$$
is in $p^r V_g$, and thus $\lambda \cdot \varphi(D) \in p^r V_g$.  As $||\varphi(D)||=1$, we deduce that $\ord_p(\lambda) \geq r$ as desired.
\end{proof}

In what follows, we will need to make use of a finer filtration on $V_g(\O)$.  Note that as an abelian group, $\FV{r}/\FV{r+1}$
is simply $(\O/p\O)^{r+1}$.  Thus, we introduce the following subfiltration of $\FV{r}$; for $s \leq r$ we set
\begin{align*}
\FV{r,s} =   
\left\{ \sum_{j=0}^g b_j X^j Y^{g-j} \in \FV{r} ~:~
p^{r-j+1} \mid b_j \text{~for~} r+1-s \leq j \leq r 
\right\}.
\end{align*}
Note that
$$
\FV{r} = \FV{r,0} \supsetneq \FV{r,1} \supsetneq \dots
\supsetneq \FV{r,r} \supsetneq \FV{r,r+1} = \FV{r+1}.
$$

In the following lemma, $\left( \O/p\O(a^j) \right)(r)$ denotes the $S_0(p)$-module $\O/p\O$ on which $\gamma = \psmallmat{a}{b}{c}{d}$ acts by multiplication by $\det(\gamma)^r \cdot a^j$.  

\begin{lemma}
\label{lemma:quot}
\noindent
\begin{enumerate}
\item 
$\FV{r,s}$ is stable under the action of $S_0(p)$.
\item For $0 \leq s \leq r$,
$$
\FV{r,s} / \FV{r,s+1} \cong \left( \O/p\O(a^{g-2r+2s}) \right) (r-s)
$$
as $S_0(p)$-modules.  Moreover, this quotient is generated by the image of the monomial $p^s X^{r-s} Y^{g-r+s}$.
\end{enumerate}
\end{lemma}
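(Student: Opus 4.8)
The plan is to reduce everything to an explicit computation with the monomial basis. First I would record the action of a general matrix $\gamma = \psmallmat{a}{b}{c}{d} \in S_0(p)$ on the basis monomial $X^j Y^{g-j}$, namely
$$
X^j Y^{g-j} \big| \gamma = (dX - cY)^j(-bX + aY)^{g-j},
$$
and expand. Since $p \mid c$ and $p \nmid a$, the key divisibility observation from the proof of Lemma \ref{lemma:filt} is that the coefficient of $X^s Y^{g-s}$ in this expansion is divisible by $p^{\max(0,\,j-s)}$. For part (1), I would take $P = \sum b_j X^j Y^{g-j} \in \FV{r,s}$ and apply this observation termwise: a monomial $X^j Y^{g-j}$ with $j \le r-1$ carries $p^{r-j}\mid b_j$ (from membership in $\FV{r}$) and, if moreover $r+1-s \le j \le r$, carries $p^{r-j+1}\mid b_j$; I then need to check that after acting by $\gamma$ the coefficient of each $X^i Y^{g-i}$ still satisfies the corresponding congruence. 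The worst case is the coefficient of $X^i Y^{g-i}$ with $r+1-s \le i \le r$, which receives contributions $b_j \cdot (\text{something divisible by } p^{j-i})$ from $j \ge i$; for $j = i$ we already have $p^{r-i+1}\mid b_i$, and for $j > i$ we need $r-j \ge 0$ (so $j \le r-1$, giving $p^{r-j}\mid b_j$ and $p^{j-i}$ from the matrix action, for a total of $p^{r-i}$ — but we need $p^{r-i+1}$) — so a little care is needed, and one uses instead that for $j$ in the appropriate range $b_j$ actually has one extra power of $p$, or that $j = r$ contributes $p^{r-i}\cdot p^{r-r+1}$. I would organize this as a single bookkeeping lemma on valuations and invoke it for both parts.

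For part (2), the quotient $\FV{r,s}/\FV{r,s+1}$ is, as an abelian group, generated by the class of $p^s X^{r-s} Y^{g-r+s}$: indeed $\FV{r,s}$ consists of polynomials whose $X^j Y^{g-j}$-coefficient is divisible by $p^{r-j}$ for $j \le r-1$ with the extra divisibility $p^{r-j+1}$ for $r+1-s \le j \le r$, and modding out by $\FV{r,s+1}$ collapses everything except the coefficient of $X^{r-s}Y^{g-r+s}$, which lives in $p^{s}\O/p^{s+1}\O \cong \O/p\O$. So the quotient is free of rank one over $\O/p\O$, generated by the stated monomial. It then remains to compute the $S_0(p)$-action on this generator modulo $\FV{r,s+1}$: acting by $\gamma$ on $p^s X^{r-s} Y^{g-r+s}$ and extracting the coefficient of $X^{r-s} Y^{g-r+s}$, the expansion $(dX-cY)^{r-s}(-bX+aY)^{g-r+s}$ contributes $d^{r-s} a^{g-r+s}$ to that monomial (all other terms either raise the $X$-degree, landing in a monomial $X^j Y^{g-j}$ with $j > r-s$ whose contribution to the quotient vanishes since those coordinates are killed, or involve a factor of $c$, hence $p$, and land in $\FV{r,s+1}$ after multiplication by $p^s$). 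Using $ad \equiv \det(\gamma) \pmod{p}$ — more precisely $d \equiv a^{-1}\det(\gamma)$ since $p \nmid a$ — this is $a^{-(r-s)}\det(\gamma)^{r-s} a^{g-r+s} = \det(\gamma)^{r-s} a^{g-2r+2s} \pmod p$, which is exactly the claimed action on $\bigl(\O/p\O(a^{g-2r+2s})\bigr)(r-s)$.

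The main obstacle is the careful valuation bookkeeping in part (1): one must verify that the defining congruences of $\FV{r,s}$ are genuinely preserved under the $S_0(p)$-action, tracking the interplay between the $p^{j-i}$ gained from the matrix entries and the exact powers of $p$ dividing the coefficients $b_j$ in the delicate range $r+1-s \le j \le r$. Everything else — identifying the quotient with $\O/p\O$ and reading off the character — is a direct computation once part (1) is in hand, so I would state part (1) with its proof in full and then treat part (2) quickly, emphasizing the coefficient extraction and the substitution $d \equiv a^{-1}\det(\gamma)$.
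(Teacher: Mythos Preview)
Your proposal is correct and follows essentially the same approach as the paper. The paper is terser---for part (1) it simply says ``follows just as in Lemma~\ref{lemma:filt}'' without spelling out the valuation bookkeeping, and for part (2) it performs exactly the coefficient extraction and rewriting $d^{r-s}a^{g-r+s} = (ad)^{r-s}a^{g-2r+2s}$ that you describe---so your more detailed treatment of the case analysis in part (1) is a welcome expansion, and your momentary worry about ``only getting $p^{r-i}$'' dissolves once you observe (as you do) that any $j>i$ with $i \ge r+1-s$ automatically lies in the range where $b_j$ carries the extra factor of $p$.
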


\begin{proof}
The first part follows just as in Lemma \ref{lemma:filt}.  For the second part, directly from the definitions, we have that $\FV{r,s} / \FV{r,s+1}$ is isomorphic to $\O/p\O$ and is generated by the image of $p^s X^{r-s} Y^{g-r+s}$.  For the $S_0(p)$-action, we have
\begin{align*}
p^s X^{r-s} Y^{g-r+s} ~\big| \psmallmat{a}{b}{c}{d} 
&\equiv p^s (dX)^{r-s} (-bX+aY)^{g-r+s} &&\pmod{\FV{r,s+1}} \\ 
&\equiv d^{r-s} a^{g-r+s} \cdot p^s X^{r-s} Y^{g-r+s} &&\pmod{\FV{r,s+1}}. 
\end{align*}
Thus,
\begin{align*}
\FV{r,s} / \FV{r,s+1} &\cong \O/p\O(d^{r-s} a^{g-r+s})\\
&\cong \O/p\O( (ad)^{r-s} a^{g-2r+2s} ) 
\cong \left(\O/p\O\left(a^{g-2r+2s}\right)\right)(r-s)
\end{align*}
as desired.
\end{proof}

The following is a slight refinement of Lemma \ref{lemma:filtslope}, and will be useful in the proof of Theorem \ref{thm:lowslope}.

\begin{lemma}
\label{lemma:filtslopestrong}
Let $\varphi \in H^1_c(\Gamma,V_g(\O))$ be a $T_p$-eigensymbol which takes values in $\FV{r,r}$ and such that $r \leq \mum(\varphi) < r+1$.  If $|| \varphi || =1$, then the slope of $\varphi$ is greater than or equal to $\mum(\varphi)$.
\end{lemma}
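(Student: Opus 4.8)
The plan is to refine the proof of Lemma~\ref{lemma:filtslope}, tracking $p$-adic valuations one notch more precisely and exploiting two inputs that the coarser lemma ignores. The first is the extra divisibility built into $\FV{r,r}$: by Lemma~\ref{lemma:quot} (or directly from the definition), a value $\sum_j b_j X^j Y^{g-j}$ lying in $\FV{r,r}$ satisfies $\ord_p(b_j)\ge r-j+1$ for $1\le j\le r$, not merely $\ord_p(b_j)\ge r-j$. The second is the hypothesis $\mum(\varphi)\ge r$, which by the definition of $\mum$ forces $\ord_p(b_0)\ge\mum(\varphi)$ for the $Y^g$-coefficient $b_0$ of \emph{every} value of $\varphi$. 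Write $\varphi\big|T_p=\lambda\cdot\varphi$ and fix $D\in\Delta_0$ with $||\varphi(D)||=1$; as in~(\ref{eqn:up}),
$$
\lambda\cdot\varphi(D)=\sum_{a=0}^{p-1}\varphi\left(\psmallmat{1}{a}{0}{p}D\right)\big|\psmallmat{1}{a}{0}{p}+\varphi\left(\psmallmat{p}{0}{0}{1}D\right)\Dp .
$$
It suffices to show that every coefficient on the right-hand side has $\ord_p\ge\mum(\varphi)$, since then $||\varphi(D)||=1$ forces $\ord_p(\lambda)\ge\mum(\varphi)$, which is the slope bound sought.

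For the terms indexed by $a$, write $\varphi(\psmallmat{1}{a}{0}{p}D)=\sum_j c_j X^j Y^{g-j}$, which lies in $\FV{r,r}$. Since $X^jY^{g-j}\big|\psmallmat{1}{a}{0}{p}=p^j X^j(-aX+Y)^{g-j}$, the $j$-th summand contributes only coefficients of $\ord_p\ge\ord_p(c_j)+j$; this is $\ge\mum(\varphi)$ for $j=0$, is $\ge(r-j+1)+j=r+1$ for $1\le j\le r$, and is $\ge j\ge r+1$ for $j\ge r+1$. Because $\mum(\varphi)<r+1$ by hypothesis, all of these are $\ge\mum(\varphi)$ --- this is the one spot where the upper bound on $\mum(\varphi)$ enters.

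The remaining term $\varphi(\psmallmat{p}{0}{0}{1}D)\Dp$ is the delicate one, and here I would follow the argument in the proof of Lemma~\ref{lemma:filtslope} verbatim. Writing $\varphi(\psmallmat{p}{0}{0}{1}D)=\sum_j a_j X^j Y^{g-j}$, Lemma~\ref{lemma:FE} shows that $\sum_j a_j X^j Y^{g-j}\big|\psmallmat{0}{-1}{N}{0}$ equals, up to the $p$-adic unit $\pm N^{k/2-1}$, a value of $\varphi$; hence it too lies in $\FV{r,r}$, and comparing coefficients (using $\gcd(N,p)=1$) yields $\ord_p(a_g)\ge\mum(\varphi)$ and $\ord_p(a_{g-j})\ge r-j+1$ for $1\le j\le r$. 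Since the $X^jY^{g-j}$-coefficient of $\varphi(\psmallmat{p}{0}{0}{1}D)\Dp$ is $a_j p^{g-j}$, the same bookkeeping as before --- now with the extra power $p^{g-j}$ in place of $p^j$ --- gives $\ord_p\ge\mum(\varphi)$ for all of these coefficients, again invoking $\mum(\varphi)<r+1$. Combining the two families of terms proves the lemma. The only step that is not pure bookkeeping is the use of the functional equation to control the ``$X$-heavy'' coefficients $a_{g-j}$, but that ingredient is already established in the proof of Lemma~\ref{lemma:filtslope}; the genuinely new observation is simply that passing from the filtration $\FV{r}$ to $\FV{r,r}$, in the presence of $r\le\mum(\varphi)<r+1$, upgrades the conclusion from ``slope $\ge r$'' to ``slope $\ge\mum(\varphi)$''.
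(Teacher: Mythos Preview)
Your proof is correct and is precisely the argument the paper intends: the paper's own proof consists of the single sentence ``The proof follows just as in Lemma~\ref{lemma:filtslope},'' and you have carried out that refinement in full detail. The key sharpening --- using the extra divisibility $\ord_p(b_j)\ge r-j+1$ for $1\le j\le r$ coming from $\FV{r,r}$, together with $\ord_p(b_0)\ge\mum(\varphi)$ from the definition of $\mum$, and the upper bound $\mum(\varphi)<r+1$ to absorb the remaining terms --- is exactly what the paper's one-line proof is pointing at.
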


\begin{proof}
The proof follows just as in Lemma \ref{lemma:filtslope}.
\end{proof}

\subsection{Proof of Theorem \ref{thm:lowslope}}

To ease notation, set $F^a = \Fil^a(V_{k-2}(\O))$, $F^{a,b}=\Fil^{a,b}(V_{k-2}(\O))$, and $\varphi = \varphi_f^{\pm}$.  Let $r \geq 0$ denote the largest integer such that $\varphi$ takes values in $F^{r}$, and let $s \geq 0$ denote the largest integer such that $\varphi$ takes values in $F^{r,s}$. Note that by definition $s \leq  r$ since $F^{r,r+1} = F^{r+1}$.

By Lemma \ref{lemma:filtslope}, we have $r \leq \ord_p(a_p)$, and thus hypothesis (\ref{hyp:slope}) gives
\begin{equation}
\label{eqn:ineq}
s \leq r < p-1.
\end{equation}
Our first goal is to show that $r=s$.

Since $\varphi$ does not take all of its values in $F^{r,s+1}$, its image in $H^1_c(\Gamma_0,F^{r,s}/F^{r,s+1})$ is non-zero. Thus, by Lemma \ref{lemma:quot}, $\varphi$ gives rise to a non-zero eigensymbol in 
$$
H^1_c(\Gamma_0,\O/p\O(a^{p-1-2r+2s}))(r-s);
$$
here, we are using that $k \equiv 2 \pmod{p-1}$.
Finally, if this symbol takes values in $\varpi^t \O$ but not in $\varpi^{t+1}\O$,  projecting modulo $\varpi^{t+1}$ and dividing by $\varpi^t$ gives rise to a non-zero eigensymbol 
$$
\o{\eta}_f \in H^1_c(\Gamma_0,\F(a^{p-1-2r+2s}))(r-s).
$$
Then, by \cite[Proposition 2.5 and Lemma 2.6]{AS}, there exists an eigenform $g$ in $S_2(\Gamma_1, \omega^{-2r+2s})$ such that $\rhobar_f \cong \rhobar_g \otimes \omega^{r-s}$.

By Lemma \ref{lemma:nebenreps}, we then have one of the following three possibilities:
\begin{equation}
\rhobarinert{g} = 
I((-2r+2s)'+1),~
\pmat{\omega^{-2r+2s+1}}{*}{0}{1},~\text{~or~}
\pmat{\omega}{*}{0}{\omega^{-2r+2s}}
\end{equation}
where, for an integer $x$, we set $x'$ equal to the unique integer $j$ with $0 \leq j \leq p-2$ and $j \equiv x \pmod{p-1}$.  In the locally reducible case, we then have
\begin{equation}
\label{eqn:reps}
\rhobarinert{f} = 
\pmat{\omega^{-r+s+1}}{*}{0}{\omega^{r-s}} \text{~or~} \pmat{\omega^{r-s+1}}{*}{0}{\omega^{-r+s}}.
\end{equation}
Hypothesis (\ref{hyp:repn}) implies that $\rhobarinert{f} \cong \psmallmat{\omega}{*}{0}{1}$ with $*$ non-zero, and thus $s \equiv r \pmod{p-1}$.  The bound in (\ref{eqn:ineq}) then gives $s=r$ as desired.

If $\rhobarloc{f}$ is irreducible, we have
$$
\rhobarloc{f} = 
\begin{cases}
I(p+(p-1)(r-s)) &  \text{if~}r-s\leq\frac{p-1}{2},  \\
I(2p-1+(p-1)(r-s)) &  \text{if~}r-s>\frac{p-1}{2}.  
\end{cases}
$$
By hypothesis (\ref{hyp:repn}), we then have
$$
\rhobarloc{f} \cong I(1) \cong I(p+(p-1)(r-s)) ~\text{~or~}~ I(2p-1+(p-1)(r-s)).
$$  
In the first case,
$$
\label{eqn:cong}
p+(p-1)(r-s) \equiv 1 \text{~or~} p \pmod{p^2-1}.
$$
Thus,
$$
r-s \equiv p \text{~or~} 0 \pmod{p+1}
$$
which forces $s=r$. A similar analysis in the second case shows that no such $r$ and $s$ exist.
Hence, in all possible cases, $r=s$ and $\varphi_f$ takes values in $F^{r,r}$.  

By Lemma \ref{lemma:quot}, $p^rY^{k-2}$ generates $F^{r,r} / F^{r,r+1}$, and thus the image of $\varphi$ in 
$$
H^1_c(\Gamma_0,F^{r,r}/F^{r,r+1}) \cong H^1_c(\Gamma_0,\O/p\O)
$$ 
is given by
$$
D \mapsto \left(\frac{1}{p^r}\varphi(D)\big|_{(X,Y)=(0,1)}\right) \pmod{p}.
$$
This implies that $\o{\eta}_f$ is given by 
$$
D \mapsto \left(\frac{1}{\varpi^t p^r}\varphi(D)\big|_{(X,Y)=(0,1)}\right) \pmod{\varpi}.
$$
By construction, $\ord_p(\varpi^t p^r)=\mm{f}{\pm}$; thus, if we let $a$ be the integer such that $\ord_p(\varpi^a)=\mm{f}{\pm}$, scaling by a unit then yields the eigensymbol
$$
D \mapsto \o{\frac{1}{\varpi^a}\varphi(D)\big|_{(X,Y)=(0,1)}} \in \F
$$
in $H^1_c(\Gamma_0,\F)$ whose system of Hecke-eigenvalues is the reduction of the system of eigenvalues attached to $f$.

The argument now proceeds as in Theorem \ref{thm:medweight} to show that
$$
\red{\varpi^{-a}\tnfi} =  \nun{n}{n-1}(\red{ \ti{n-1}{g}}) ~~\text{~in~} \F[G_n]
$$ 
as desired.  

Lastly, the inequality $\mm{f}{\pm} \leq \ord_p(a_p)$ follows from Lemma \ref{lemma:filtslopestrong}.

\section{A strange example}
\label{sec:example}

In this section, we describe a strange behavior of Iwasawa invariants of forms which do not satisfy the hypotheses of Theorems \ref{thm:medweight} and \ref{thm:lowslope}.  

Take $p=3$, and consider the space of cuspforms $S_{18}(\Gamma_0(11),\Qpbar)$.  In this space, there are exactly 2 (Galois conjugacy classes of) eigenforms of slope 2 whose residual representations are isomorphic to the 3-torsion on $X_0(11)$.  Let $f_1$ and $f_2$ denote representatives from the each of these classes.  Note that the associated residual representation is locally reducible at $3$ as $X_0(11)$ is ordinary at 3.  Neither Theorem \ref{thm:medweight} nor Theorem \ref{thm:lowslope} apply directly to these forms as the weight $k=18$ is greater than $p^2$, and the slope $2$ is not less than $p-1$.  

Let $\O_j$ denote the ring of integers of the field generated by the coefficients of $f_j$, and let $\varpi_j$ denote a uniformizer.
A computer computation shows that $\mump(f_j)=2$ for $j=1,2$, and so we consider the map
\begin{align*}
V_{16}(\O_j) &\lra \O_j/p^2\varpi_j\O_j\\
P(X,Y) &\mapsto P(0,1) \pmod{p^2\varpi_j}.
\end{align*}
As this map is $\Gamma_0(p^3)$-equivariant, it induces a Hecke-equivariant map
$$
\alpha: H^1_c(\Gamma,V_{16}(\O_j)) \lra H^1_c(\Gamma_0(p^3N),\O_j/p^2\varpi_j\O_j).
$$
By construction, $\alpha(\varphi_{f_j}^+)$ is non-zero and takes values in $p^2\O_j/p^2\varpi_j\O \cong \F_3$.  If we view $\alpha(\varphi_{f_j}^+)$ in $H^1_c(\Gamma_0(p^3N),\F_3)$, then it is an eigensymbol whose system of Hecke-eigenvalues is the reduction of the system attached to $f_j$.  

A computer computation then shows that the subspace of $H^1_c(\Gamma_0(p^3N),\F_3)^+$ with this system of Hecke-eigenvalues is 3-dimensional and generated by 
$$
\vpb_{g}^+ \Dpv{p},~ \vpb_{g}^+ \Dpv{p^2}, \text{~~and~~} \vpb_{g}^+ \Dpv{p^3}
$$  
where $g$ is the unique normalized eigenform in $S_2(\Gamma_0(11))$.  (Note that mod $p$ multiplicity one is failing for trivial reasons!)
Thus, we have
$$
\alpha(\varphi_{f_j}^+) = a_{j,1} \cdot \vpb_{g}^+ \Dpv{p} + a_{j,2} \cdot \vpb_{g}^+ \Dpv{p^2} + a_{j,3} \cdot \vpb_{g}^+ \Dpv{p^3},
$$
and, in particular,
\begin{equation}
\label{eqn:red2}
\red{\frac{\tn{f_j}}{p^2} } = a_{j,1} \cdot \nun{n}{n-1}(\t{n-1}{g}) + 
a_{j,2} \cdot \nun{n}{n-2}(\t{n-2}{g}) +
a_{j,3} \cdot \nun{n}{n-3}(\t{n-3}{g}).
\end{equation}
These equations should allow us to determine the Iwasawa invariants of $f_j$ in terms of the invariants of the $p$-ordinary form $g$; in this case, one computes that $\mu(g)=\lambda(g)=0$. 

A key difference now emerges between $f_1$ and $f_2$; namely, a computer computation shows that 
$$
a_{1,1} \neq 0 \text{~~while~~} a_{2,1} = 0.
$$
This vanishing is significant because for $j=1$, the first term on the right hand side of (\ref{eqn:red2}) dominates in calculating $\lambda$, and we have
$$
\lambda(\tn{f_1}) = p^n-p^{n-1} + \lambda(g) = p^n-p^{n-1}.
$$
For $j=2$, the second term in (\ref{eqn:red2}) dominates and we have
$$
\lambda(\tn{f_2}) = p^n-p^{n-2} + \lambda(g) = p^n-p^{n-2}.
$$
Thus, there is a ``second-order" difference in the rate of growth of the $\lambda$-invariants of $f_1$ and $f_2$.

Similar examples exist in the locally irreducible case.  For instance, for $p=3$, there is an eigenform $f$ in $S_{18}(\Gamma_0(17),\Qpbar)$ whose residual representation is isomorphic to the 3-torsion in $X_0(17)$ (which is locally irreducible at 3 as $X_0(17)$ is supersingular at 3), whose slope is 5, and for which we have
$$
\lambda(\tnf) = p^n-p^{n-2} + q_{n-2} 
$$
as opposed to the $\lambda$-invariants $q_{n+1} = p^n-p^{n-1} + q_{n-1}$ which occur in Theorems \ref{thm:medweight} and \ref{thm:lowslope}.

\end{document}